\newtheorem{theorem}{Theorem}
\newtheorem{Problem}{Problem}
\newtheorem{lemma}{Lemma}
\newtheorem{proposition}{Proposition}
\newtheorem{corollary}{Corollary}
\newtheorem{claim}{Claim}
 \newtheorem{thm}{Theorem}[section]
 \newtheorem{cor}[thm]{Corollary}
 \newtheorem{lem}[thm]{Lemma}
 \newtheorem{prop}[thm]{Proposition}
 \theoremstyle{definition}
 \theoremstyle{remark}
 \numberwithin{equation}{section}
\newcommand{\vertiii}[1]{{\left\vert\kern-0.25ex\left\vert\kern-0.25ex\left\vert #1
    \right\vert\kern-0.25ex\right\vert\kern-0.25ex\right\vert}}
\newcommand{\f}[2]{\frac{#1}{#2}}
\newcommand{\cl}{{\mathcal L}}
\newcommand{\al}{\alpha}
\newcommand{\ga}{\gamma}
\newcommand{\de}{\delta}
\newcommand{\De}{\Delta}
\newcommand{\ka}{\kappa}
\newcommand{\la}{\lambda}
\newcommand{\La}{\Lambda}
\newcommand{\si}{\sigma}
\newcommand{\rn}{{\mathbf R}^n}
\newcommand{\rone}{\mathbf R}
\newcommand{\rtwo}{\mathbf R^2}
\newcommand{\ku}{\mathbf u}
\newcommand{\dpr}[2]{\langle #1,#2 \rangle}
\newcommand{\eps}{\epsilon}
\newcommand{\cq}{\mathcal Q}
\newcommand{\cp}{\mathcal P}
\newcommand{\p}{\partial}
\newcommand{\beq}{\begin{equation}}
\newcommand{\eeq}{\end{equation}}
\newcommand{\beqna}{\begin{eqnarray*}}
\newcommand{\eeqna}{\end{eqnarray*}}
\newcommand{\beqn}{\begin{equation*}}
\newcommand{\eeqn}{\end{equation*}}
\newcommand{\bp}{\begin{proof}}
\newcommand{\ep}{\end{proof}}
\newcommand{\bprop}{\begin{proposition}}
\newcommand{\eprop}{\end{proposition}}
\newcommand{\bt}{\begin{theorem}}
\newcommand{\et}{\end{theorem}}
\newcommand{\bex}{\begin{Example}}
\newcommand{\eex}{\end{Example}}
\newcommand{\bc}{\begin{corollary}}
\newcommand{\ec}{\end{corollary}}
\newcommand{\bcl}{\begin{claim}}
\newcommand{\ecl}{\end{claim}}
\newcommand{\bl}{\begin{lemma}}
\newcommand{\el}{\end{lemma}}
\newcommand{\ck}{{\mathcal K}}
\begin{document}

\title[Steady states  of the forced SQG are globally attractive]
 {On the forced surface quasi-geostrophic  equation: existence of steady states and sharp relaxation rates}
%----------Author 1

\thanks{Stefanov  is partially  supported by  NSF-DMS   under  \# 1908626.}

 \author[F. Hadadifrad]{\sc Fazel Hadadifard}
 \address{ Department of Mathematics,
 	Drexel University,
 Korman Center, 
 33rd \& Market Streets, Philadelphia, PA 19104,  USA}
 \email{fh352@drexel.edu}
 
\author[A. Stefanov]{\sc Atanas G. Stefanov}
\address{Department of Mathematics,
University of Kansas,
1460 Jayhawk Boulevard,  Lawrence KS 66045--7523, USA}
\email{stefanov@ku.edu}

\subjclass[2000]{Primary  35Q35, 35B40; 76D03 Secondary 76B03, 76D07}

\keywords{time decay,  steady state, forced quasi-geostrophic equation}

\date{\today}

\begin{abstract}
 We consider the asymptotic behavior of the surface quasi-geostrophic equation, subject to a small external force. Under suitable assumptions on the forcing, we first construct the steady states and we provide a number of useful {\it a posteriori} estimates for them. Importantly, to do so, we only impose minimal cancellation conditions on the forcing function. 
 
 Our main result is that all  $L^1\cap L^\infty$   localized initial data produces global  solutions of the forced SQG, which converge to the steady states   in $L^p(\rtwo), 1<p\leq 2$  as time goes to infinity.   This establishes that the steady states serve as one point attracting set.  Moreover,  by employing the method of scaling variables, we compute the sharp relaxation rates, by requiring slightly more localized initial data.

\end{abstract}

\maketitle

\section{Introduction}
  In this paper, the main object of investigation is   the forced two dimensional surface quasi-geostrophic equation 
  \begin{eqnarray}
  	\label{10}
  	\left\{ 
  	\begin{array}{l} 
  		\theta_t+\ku_\theta \cdot \nabla \theta+  \La^{\al} \theta= f, \  x \in \rtwo, t>0 \\ 
  		 \theta(x, 0)= \theta_0(x)
  	\end{array}  
  	\right.
  \end{eqnarray}
  where $\theta, f : \rtwo\to \rone$, $\La=\sqrt{-\De}$ is the Zygmund's  operator and  
  $$
  \ku_\theta = R^{\perp} \theta= (-R_2\theta, R_1\theta)=\La^{-1} (- \partial_2 \theta, \partial_1 \theta).
  $$
   Note that $div(\ku_\theta)=0$.  In fact, we adopt the notation $\ku_f$ for any scalar field $f$ to mean the divergence-free vector field  $\ku_f:=R^{\perp} f$. The model \eqref{10} is of fundamental importance in the modeling of large scale fluid motion, especially in oceanographic context.  The critical case, namely $\al=1$, which is also the most challenging from a mathematical standpoint,  was put forward in \cite{CMT} (see also \cite{CCW}), as a model of surface temperature of  a rapidly rotating fluid. 
   In fact, this and related models   frequently arise in fluid dynamics and as such, they  have been widely studied  in the last twenty  years. We refer the reader to the works \cite{AH, B, C, C1, CC,   NS} and references therein.

   We consider the parameter $\al$ in the sub-critical regime $\al\in (1,2)$, although the case $\al=2$ is certainly interesting as well, both from physical and mathematical point of view.  
  
  \subsection{Global regularity for \eqref{10}: some  recent results and historical perspectives} 
  The well-posedness theory for the homogeneous version of \eqref{10}, i.e. with $f=0$ is well-understood in the sub-critical case $\al>1$. Informally, reasonably localized (including  large) initial data $\theta_0$  produces global solutions, which preserve the functional-analytic properties of the initial data. That is, the so-called global regularity of the initial value problem has been established in various scenarios.  These results have appeared in literally hundreds of publications, which is why we do not attempt to follow through with precise statements and references. Similarly, in the case $\al=1$, the criticality of the problem allows one  to reproduce the global regularity problem for small data. 
  More recently,   a substantial progress has been made  in the regularity problem for large data, see \cite{CVa, CV,  KNV, KN}.  It has been established, that under fairly mild assumptions on initial data, the solution persists globally and preserve the smoothness of $\theta_0$.  It is worth noting that  the long time  dynamics  for the sub-critical and critical cases of \eqref{10}
  (both in the regime $f=0$ and $f\neq 0$) were studied intensively in \cite{CTV, CTV1, JMST, FPV, NS}. In particular, estimates for the decay rates for  regular and weak solutions were obtained in \cite{NS, SS}. In  \cite{CTV1}, the authors have established the existence of a global attractor for the problem posed on periodic domain.  
  
  The well-posedness in the supercritical case $\al<1$ remains an open elusive problem. The expectation is that at least for some initial data, one should  observe a finite time blow up. That has not been settled as of this writing.   

  \subsection{Motivation and main results}  
  Our main object of investigation is the forced problem.  
  Of particular interest will be the properties of the steady states 
   $\tilde{\theta}$, which   satisfies the following profile equation 
  \begin{eqnarray}
  \label{20}
  \La^{\al} \widetilde{\theta}+ \ku_{\widetilde{\theta}} \cdot \nabla \widetilde{\theta}= f, x\in\rtwo. 
  \end{eqnarray}
  More precisely, we would like to draw conclusions about the global dynamics of \eqref{10} from the properties of  $\tilde{\theta}$. This is indeed the main objective of this work. We should mention here that the problem that we aim at considering has already been addressed, at least partially, in several recent works. Regarding the un-forced  SQG (i.e. with $f=0$), in \cite{SS}, the authors have obtained some estimates for the decay rates of the solutions of  as well as estimates from below. More recently, in our work \cite{HS}, we have considered a wide variety of un-forced SQG like problem, of which SQG is an example. We have shown the optimal decay rates for the solutions, once the initial data $\theta_0$ has some stronger localization properties.

  We now describe the work of Dai, \cite{Dai}, which was the starting point and the main motivation of our investigation.  In it, she considers the case $1\leq \al<2$. She starts by constructing solutions    of \eqref{20}, under appropriate conditions of the small forcing term $f$. More importantly, she has established a non-linear stability property for the evolution, namely that the solution of the dynamic problem (only under the assumption that $\theta_0-\tilde{\theta}\in L^2(\rtwo)$), converges to the steady state $\tilde{\theta}$ in $L^2$ sense. Note that no estimates on the speed of the decay to zero are provided in \cite{Dai}. However, it is worth noting that even in the case of zero forcing, the convergence to zero of $ \| \theta(t, \cdot)-\tilde{\theta}\|_{L^2(\rtwo)}$ may happen  with arbitrarily slow decay, see \cite{NS}, unless one assumes more integrability of $\theta_0$.

  In order to describe our results, it is convenient to track the deviation from the steady state $\tilde{\theta}$, so we introduce $v:=\theta-\widetilde{\theta}$. This new variable   satisfies the following equation  
  \begin{eqnarray}
  \label{30}
  \left\{ 
  \begin{array}{l} 
  v_t+ \La^{\al} v+ \ku_{\widetilde{\theta}} \cdot \nabla v+  \ku_v \cdot \nabla \widetilde{\theta}+   \ku_v \cdot \nabla v= 0, \\ 
  v(x, 0)= v_0(x).
  \end{array}  
  \right.
  \end{eqnarray}
  Based on the physical interpretation of our model, we are only  interested in localized functions $\theta, \tilde{\theta}$, and consequently $f$ to work with. In addition, and for mostly the same reasons, we only consider the sub-critical case $1<\al<2$. This allows us to consider strong solutions and our results will not depend on additional assumptions on the properties of weak solutions, which is necessary in the cases $\al\leq 1$. 
 Next, we shall need to assume a sufficiently  smooth and decaying initial forcing function $f$. Note that due to the form of \eqref{20}, some cancellation of $f$ is necessary,  see Theorem \ref{theo:5}  for  the precise requirements on $f$.    
 
 We now aim at discussing   the main results  of this work. Before we present the  specifics, let us give a general overview of the goals and the general flavor of the problems  that we would like to address. Our first issue, as in \cite{Dai},  is to study the solvability of the elliptic problem \eqref{20}. This turns out to be non-trivial and we do not have a complete answer to the following natural question. 
 	\begin{Problem}
 		\label{Q1} 
 		Given smooth and decaying $f$, with appropriate cancellation conditions, construct  steady state solution 
 		$\tilde{\theta}$ of \eqref{20}. 
 		\end{Problem}
 	We note that this is in general (i.e. for large forcing $f$)  an essentially open question, which merits further, independent investigation. 	It should be stated though that in the work of Dai, \cite{Dai}, the issue was partially resolved in the case of small  forcing $f$. Even though some cancellation assumptions on the (small) forcing term $f$ are necessary, as discussed above, the conditions imposed in \cite{Dai} requires   $\hat{f}(\xi)=0: |\xi|<\delta$. This  in practice reduces the applicability of such  result,  as $f$ is forced, among other things,  to have zero moments of all orders.  We have succeeded in reducing the cancellation conditions by simply requiring that $f$ is small  in some (reasonably small) negative order Sobolev spaces,  see  Theorem \ref{theo:5} below.   
 		
Next, we are interested in the stability property of the dynamics, that is the property established in \cite{Dai} that the solutions of \eqref{10}, with any size initial data $\theta_0$ eventually converges to the steady state $\tilde{\theta}$. 
We refer to it as relaxation of the global solutions to the steady state. That is, we are asking whether or not {\it any} solution of \eqref{10} should converge/relax  to $\tilde{\theta}$,  in the appropriate norms as $t\to \infty$. 
 More precisely, 
\begin{Problem}
	\label{Q2} 
		Assuming  existence of a  solution $\tilde{\theta}$  of \eqref{20},  with appropriate properties, show that any solution of \eqref{10} converges to $\tilde{\theta}$.  Provide estimates for  the relaxation rates, possibly sharp ones. 
	\end{Problem} 
 Clearly, any result in the direction of Problem \ref{Q2} provides as a corollary, an uniqueness statement for the solvability of \eqref{20}.  Thus, a result of this type complements nicely an eventual existence result for \eqref{20}. 
 We have the following results, under the standing assumption $1<\al<2$. 
 \begin{theorem}(Existence of the steady state in unweighted spaces)
 	\label{theo:5} 
 	
 	There exists $\eps_0>$, so that whenever the forcing term 
 	$f\in \dot{W}^{-\al,\f{2}{\al-1}}(\rtwo): \|f\|_{\dot{W}^{-\al,\f{2}{\al-1}}}<\eps_0$, the steady state equation \eqref{20} has a solution $\tilde{\theta}\in L^{\f{2}{\al-1}}(\rtwo)$, with 
 	$\|\tilde{\theta}\|_{L^{\f{2}{\al-1}}(\rtwo)}\leq 2 \|f\|_{\dot{W}^{-\al,\f{2}{\al-1}}}$. 
 	If in addition, for any $p>\f{2}{3-\al}$, $f\in W^{-\al,p}(\rtwo)$, then the steady state $\tilde{\theta}\in L^p$ and it satisfies the bound 
 	$$
 	\|\tilde{\theta}\|_{L^{p}(\rtwo)}\leq 2 \|f\|_{\dot{W}^{-\al, p}}.
 	$$
 	
 Assuming $f\in \dot{W}^{-\al,\f{2}{\al-1}}(\rtwo)\cap W^{1-\al, \f{2}{\al}}:  \|f\|_{\dot{W}^{-\al,\f{2}{\al-1}}}<\eps_0$,  there is  the {\it a posteriori} estimate 
 \begin{equation}
 \label{a:60}
 \|\nabla \tilde{\theta}\|_{L^{\f{2}{\al}}}\leq C \|f\|_{\dot{W}^{1-\al, \f{2}{\al}}}.
 \end{equation}
 \end{theorem}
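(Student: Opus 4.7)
The plan is to use a Banach fixed point argument based on the representation $\tilde\theta = \La^{-\al} f - \La^{-\al}\nabla\cdot(\ku_{\tilde\theta}\,\tilde\theta)$, obtained by inverting $\La^\al$ in \eqref{20} and exploiting $\nabla\cdot\ku_{\tilde\theta}=0$ to rewrite $\ku_{\tilde\theta}\cdot\nabla\tilde\theta=\nabla\cdot(\ku_{\tilde\theta}\,\tilde\theta)$. Symbolically, $\La^{-\al}\nabla\cdot$ is $I_{\al-1}\mathcal R$, i.e. the Riesz potential of order $\al-1\in(0,1)$ composed with Riesz transforms, both bounded on the relevant Lebesgue scales.

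For the first assertion, set $q=\f{2}{\al-1}$ and let $T(\theta)=\La^{-\al} f - \La^{-\al}\nabla\cdot(\ku_\theta\,\theta)$. Hardy--Littlewood--Sobolev in $\rtwo$ gives $\|I_{\al-1}g\|_{L^q}\lesssim \|g\|_{L^{q/2}}$ (precisely because $\f{2}{q}-\f{1}{q}=\f{\al-1}{2}$), and combined with H\"older and the $L^q$-boundedness of Riesz transforms yields $\|T(\theta)\|_{L^q}\leq \|f\|_{\dot W^{-\al,q}} + C\|\theta\|_{L^q}^2$, together with the parallel Lipschitz bound $\|T(\theta_1)-T(\theta_2)\|_{L^q} \leq C(\|\theta_1\|_{L^q}+\|\theta_2\|_{L^q})\|\theta_1-\theta_2\|_{L^q}$. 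Choosing $\eps_0$ with $4C\eps_0<1$, $T$ is a contraction on the closed ball of radius $2\|f\|_{\dot W^{-\al,q}}$ in $L^q$, whose fixed point is the steady state. For the $L^p$ assertion with $p>\f{2}{3-\al}$, I would run the same contraction in the joint space $L^q\cap L^p$; the key nonlinear estimate is
\beqn
\|\La^{-\al}\nabla\cdot(\ku_\theta\,\theta)\|_{L^p}\lesssim \|\ku_\theta\,\theta\|_{L^s} \leq \|\ku_\theta\|_{L^q}\|\theta\|_{L^p},\qquad \f{1}{s}=\f{\al-1}{2}+\f{1}{p},
\eeqn
where HLS requires $s>1$, i.e. $\f{\al-1}{2}+\f{1}{p}<1$, which is exactly the hypothesis $p>\f{2}{3-\al}$. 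The smallness $\|\tilde\theta\|_{L^q}\leq 2\eps_0$ absorbs the nonlinear contribution.

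For the gradient estimate, apply $\La$ to the equation to get $\La\tilde\theta = \La^{1-\al}f - \La^{2-\al}\mathcal R(\ku_{\tilde\theta}\,\tilde\theta)$, where $\La^{1-\al}\nabla\cdot$ is rewritten as $\La^{2-\al}\mathcal R$ with $\mathcal R$ a matrix of bounded Calder\'on--Zygmund operators. Since $2-\al\in (0,1)$, a Kato--Ponce (fractional Leibniz) inequality in $L^{2/\al}$ with the symmetric splitting $p_1=p_4=2$ and $p_2=p_3=q$ (note $\f{1}{2}+\f{\al-1}{2}=\f{\al}{2}$) yields $\|\La^{2-\al}(\ku_{\tilde\theta}\,\tilde\theta)\|_{L^{2/\al}}\lesssim \|\La^{2-\al}\tilde\theta\|_{L^2}\,\|\tilde\theta\|_{L^q}$. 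A further HLS step produces $\|\La^{2-\al}\tilde\theta\|_{L^2}=\|I_{\al-1}\La\tilde\theta\|_{L^2}\lesssim \|\La\tilde\theta\|_{L^{2/\al}}$ (using $\f{\al}{2}-\f{1}{2}=\f{\al-1}{2}$). Combining,
\beqn
\|\La\tilde\theta\|_{L^{2/\al}} \leq C\|f\|_{\dot W^{1-\al,2/\al}} + C\|\tilde\theta\|_{L^q}\|\La\tilde\theta\|_{L^{2/\al}},
\eeqn
and $\|\tilde\theta\|_{L^q}\leq 2\eps_0$ allows absorption. The a priori finiteness of $\|\nabla\tilde\theta\|_{L^{2/\al}}$ needed to make the absorption rigorous is secured either by incorporating $\dot W^{1,2/\al}$ directly into the contraction from the first step or by approximating $f$ with smooth compactly supported data and passing to the limit.

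The main obstacle lies in this last step: a naive HLS bound on $\|\La^{1-\al}(\ku_{\tilde\theta}\cdot\nabla\tilde\theta)\|_{L^{2/\al}}$ forces the H\"older exponent $s=\f{2}{2\al-1}$ to exceed $1$, i.e.\ $\al<3/2$, and therefore fails in part of the relevant range. The trick is to move the now positive-order derivative $\La^{2-\al}$ symmetrically onto both factors through Kato--Ponce at the $L^2$ endpoint, thereby exploiting the dimensional gain $\f{\al-1}{2}$ between $L^{2/\al}$ and $L^2$; this closes the estimate uniformly for all $\al\in(1,2)$.
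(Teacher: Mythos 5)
Your proposal is correct and follows essentially the same route as the paper: a contraction/iteration in the critical space $L^{\f{2}{\al-1}}$ built on the bound $\|\La^{-\al}\nabla\cdot(\ku_\theta\,\theta)\|_{L^q}\lesssim\|\ku_\theta\,\theta\|_{L^{q/2}}$, the same H\"older/HLS exponent count giving the constraint $p>\f{2}{3-\al}$, and a Kato--Ponce plus Sobolev absorption argument for $\|\nabla\tilde\theta\|_{L^{2/\al}}$ (the paper's choice $\f1p=\f1r+\f{\al-1}{2}$ specializes at $p=\f{2}{\al}$ to exactly your symmetric $L^2$/$L^{2/(\al-1)}$ splitting). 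Your closing observation about why the naive $\La^{1-\al}$ HLS bound fails for $\al\geq 3/2$ correctly identifies the reason the paper also resorts to the fractional Leibniz rule there.
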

 \noindent 
 {\bf Remarks:} 
 \begin{itemize}
 	\item The smallness assumptions are in scale invariant spaces, as is customary. 
 	\item It is possible to formulate  an  uniqueness statement for the small solutions $\tilde{\theta}$ obtained in Theorem \ref{theo:5}, but we have stronger dynamics statement, see Theorem \ref{theo:10} below, which imply global uniqueness.
 	\item By far, the strongest cancellation condition is the requirement $f\in \dot{W}^{-\al,\f{2}{\al-1}}(\rtwo)$, which is a fairly mild one  for values of $\al$ close to $1$. In fact, this  may be Sobolev controlled by regular $L^q(\rtwo), 1<q$ norms. 
 	Even when $\al$ is close to two, our assumptions will be satisfied, at small frequencies,  by requiring the mild cancellation condition $|\hat{f}(\xi)|\leq C |\xi|^{1+\de}, |\xi|<1$.  
 	 \end{itemize}

 \begin{theorem}(Relaxation in $L^p$  spaces)
 	\label{theo:10} 
 	
 Let $1< \al < 2$ and $f\in W^{1-\al, \f{4}{2+\al}}$.  Then, there exists $\eps_0>0$, so that whenever 	  the steady state 
 $\tilde{\theta}$  satisfies $\|\nabla \tilde{\theta}\|_{L^{\f{2}{\al}}}<\eps_0$, 
and  the initial data $v_0=\theta_0-\tilde{\theta}  \in L^1\cap L^\infty(\rtwo)$, the problem \eqref{30} has an unique, global solution in  $L^2 \cap L^\infty$.  Moreover,     
 	there is a constant $C=C_{\al,  \eps_0, \|v_0\|_{L^2\cap L^\infty}}$ so that ,  
 	\begin{equation}
 	\label{14}
 	\|\theta(t, \cdot)-\tilde{\theta}(\cdot)\|_{L^p(\rtwo)} \leq  
 	\f{C} {(1+t)^{\f{2}{\al}(1-\f{1}{p})}}, \ \ 1<p\leq 2.
 	\end{equation} 
 	
 	The bound \eqref{14} can  be extended to any $2<p<\infty$, as follows. For any $q:2<q<\infty$, there exists $\eps_0(q)$, so that whenever 
 	$\|\nabla \tilde{\theta}\|_{L^{\f{2}{\al}}}<\eps_0(q)$, and $v_0\in L^1\cap L^\infty(\rtwo)$, then 
 	$$
 		\|\theta(t, \cdot)-\tilde{\theta}(\cdot)\|_{L^p(\rtwo)} \leq  
 		\f{C} {(1+t)^{\f{2}{\al}(1-\f{1}{p})}}, \ \ 2\leq p<q.
 	$$
 \end{theorem}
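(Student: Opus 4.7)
The plan is to work with the perturbation equation \eqref{30}, rewritten in divergence form as
\begin{equation*}
v_t+\La^\al v+\nabla\cdot(v\ku_{\tilde\theta}+\tilde\theta\ku_v+v\ku_v)=0.
\end{equation*}
For the global existence, the $L^2$ energy identity reduces, after the two advective contributions cancel, to
\begin{equation*}
\f12\f{d}{dt}\|v\|_{L^2}^2+\|\La^{\al/2}v\|_{L^2}^2=-\int v\,\ku_v\cdot\nabla\tilde\theta\,dx.
\end{equation*}
Applying H\"older with exponents $\bigl(\tfrac{2}{\al},\tfrac{4}{2-\al},\tfrac{4}{2-\al}\bigr)$, the $L^q$-boundedness of Riesz transforms and the Sobolev embedding $\dot H^{\al/2}(\rtwo)\hookrightarrow L^{4/(2-\al)}$, the right-hand side is bounded by $C\|\nabla\tilde\theta\|_{L^{2/\al}}\|\La^{\al/2}v\|_{L^2}^2$, which the smallness hypothesis absorbs into the dissipation. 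Combined with standard local well-posedness for the mild formulation and $L^\infty$-propagation via the Resnick/C\'ordoba--C\'ordoba pointwise inequality applied to $|v|^{p-2}v$, this furnishes a global solution in $L^2\cap L^\infty$ together with $\int_0^\infty\|\La^{\al/2}v(s)\|_{L^2}^2\,ds<\infty$.

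For the $L^2$ decay I would apply Schonbek's Fourier splitting: with $R(t)^{\al}=k/(1+t)$,
\begin{equation*}
\|\La^{\al/2}v\|_{L^2}^2\ge R(t)^{\al}\|v\|_{L^2}^2-R(t)^{\al+2}\|\wh v(t,\cdot)\|_{L^\infty_\xi}^2,
\end{equation*}
so the dissipation inequality integrates to $\|v(t)\|_{L^2}^2\lesssim(1+t)^{-2/\al}$ as soon as $\|\wh v(t,\cdot)\|_{L^\infty_\xi}$ stays uniformly bounded in time. Writing $N:=v\ku_{\tilde\theta}+\tilde\theta\ku_v+v\ku_v$, the Fourier-side mild formula
\begin{equation*}
\wh v(t,\xi)=e^{-t|\xi|^\al}\wh{v_0}(\xi)-i\xi\cdot\int_0^t e^{-(t-s)|\xi|^\al}\wh N(s,\xi)\,ds
\end{equation*}
controls that Fourier $L^\infty$ norm via $\|\wh N\|_{L^\infty_\xi}\le\|N\|_{L^1_x}$: the quadratic piece obeys $\|v\ku_v\|_{L^1}\le\|v\|_{L^2}^2$, while $\|v\ku_{\tilde\theta}\|_{L^1}+\|\tilde\theta\ku_v\|_{L^1}\lesssim\|v\|_{L^{4/(4-\al)}}\|\tilde\theta\|_{L^{4/\al}}$, with $\|\tilde\theta\|_{L^{4/\al}}$ supplied by the Sobolev embedding of $\nabla\tilde\theta\in L^{4/(2+\al)}$---the hypothesis $f\in W^{1-\al,4/(2+\al)}$ yields this via an analogue of \eqref{a:60}. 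For $\al\in(1,2)$ the relevant time integrals decay, which together with $|\wh{v_0}(\xi)|\le\|v_0\|_{L^1}$ and a continuity bootstrap produces the uniform Fourier $L^\infty$ bound.

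To interpolate down to $L^p$, I next establish $\|v(t)\|_{L^1}\le M$ uniformly. From the physical mild formulation and $\|e^{-(t-s)\La^\al}\nabla\cdot F\|_{L^1}\lesssim(t-s)^{-1/\al}\|F\|_{L^1}$, the previously-derived $L^2$ decay renders both $\int_0^t(t-s)^{-1/\al}(1+s)^{-2/\al}\,ds$ and $\int_0^t(t-s)^{-1/\al}(1+s)^{-1/2}\,ds$ bounded uniformly in $t$ (exploiting $\al\in(1,2)$), and a Gronwall-type bootstrap closes under the standing smallness. The rate \eqref{14} for $1<p\le2$ then follows by interpolation: with $\theta=\tfrac{2}{p}-1$,
\begin{equation*}
\|v(t)\|_{L^p}\le\|v(t)\|_{L^1}^{\theta}\|v(t)\|_{L^2}^{1-\theta}\lesssim(1+t)^{-(2/\al)(1-1/p)}.
\end{equation*}
The extension to $2<p<q$ is a separate Duhamel contraction in the weighted-in-time space $X_p=\{v:\sup_t(1+t)^{(2/\al)(1-1/p)}\|v(t)\|_{L^p}<\infty\}$, applying $\|e^{-t\La^\al}\nabla\cdot F\|_{L^p}\lesssim t^{-1/\al-(2/\al)(1/r-1/p)}\|F\|_{L^r}$ with intermediate exponents $r$ balancing each nonlinear term against the already-known lower-$p$ decays; the $q$-dependent $\eps_0(q)$ enters because the $\tilde\theta\ku_v$ and $v\ku_{\tilde\theta}$ contributions carry prefactors proportional to $\|\nabla\tilde\theta\|_{L^{2/\al}}$ whose effective constants deteriorate as $p\uparrow q$. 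The main technical obstacle is the simultaneous closure of the Fourier $L^\infty$, $L^2$-decay, and $L^1$ bounds---each needs the others---which must be set up through a continuity argument where the initial smallness propagates uniformly in time.
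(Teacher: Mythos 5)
Your route is genuinely different from the paper's. The paper never touches Fourier splitting or the Duhamel formula for this theorem: it passes to the self-similar variables $\tau=\ln(1+t)$, $\eta=x/(1+t)^{1/\al}$ of Section \ref{sec:4.1}, where the change of variables produces an explicit damping term $(\f{2}{\al p}+\f{1}{\al}-1)\|V\|_{L^p}^p$ in the $L^p$ energy identity and the dissipation is exploited through the coercivity $\dpr{\La^\al V}{|V|^{p-2}V}\geq C_p\|V\|_{L^{2p/(2-\al)}}^p$ of \eqref{a:34}; the sharp rate then comes from an iteration in which an $L^1$ energy estimate improves the $L^2$ decay exponent by the fixed increment $\f{1}{2}-\f{1}{\al}<0$ at each step (Lemmas \ref{lem5} and \ref{lem7}). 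Your Schonbek-type scheme (Fourier splitting for $L^2$, mild formulation for $\|\hat v\|_{L^\infty_\xi}$ and $\|v\|_{L^1}$, interpolation down to $1<p\leq 2$, weighted-in-time Duhamel for $p>2$) is the classical alternative, closer in spirit to \cite{NS}, and the exponent bookkeeping checks out: the integrals $\int_0^t(t-s)^{-1/\al}(1+s)^{-1/2}\,ds$ and $\sup_\xi\int_0^t|\xi|e^{-(t-s)|\xi|^\al}(1+s)^{-1/2}\,ds$ are bounded precisely because $\al\in(1,2)$. What the paper's method buys is that the large-data difficulty is essentially invisible, since $\int \ku_V\cdot\nabla V\,|V|^{p-2}V\,d\eta=0$ makes every $L^p$ energy identity hold unconditionally; what yours buys is avoiding the scaled-variable and weighted-space machinery altogether.

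There are, however, two concrete soft spots. First, the closure you defer to "a continuity bootstrap" is the actual crux for large $v_0$: on a first pass you have no $L^1$ bound, so $\|v\ku_{\tilde\theta}\|_{L^1}$ is uncontrolled, the Duhamel $L^1$ estimate yields only the growing bound $(1+t)^{1-1/\al}$, and the Fourier $L^\infty_\xi$ bound grows likewise. Since the quadratic self-interaction enters with an $O(\|v_0\|_{L^2}^2)$ coefficient that no choice of $\eps_0$ makes small, a smallness continuity argument cannot close; what does close is the Schonbek iteration (crude growing bounds $\to$ a first decay $\|v\|_{L^2}\lesssim(1+t)^{1-2/\al}$ $\to$ improved $L^1$ and Fourier bounds $\to\cdots$), which you would need to set up and run explicitly --- this is the substance of the proof and is currently missing. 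Second, the $L^\infty$ component of the existence claim is not justified: the maximum principle applied to \eqref{30} gives $\p_t\|v\|_{L^\infty}\leq\|\ku_v\cdot\nabla\tilde\theta\|_{L^\infty}$, and neither $\|\ku_v\|_{L^\infty}$ (the Riesz transforms are unbounded on $L^\infty$) nor $\|\nabla\tilde\theta\|_{L^\infty}$ is available under the stated hypotheses. The paper itself has to work for this, controlling $\|\ku_V\|_{L^\infty}$ through $W^{s,p}$ bounds with $s>2/p$ and a crude gradient estimate (Lemma \ref{le:4} and Corollary \ref{cor:2}); your one-line appeal to the C\'ordoba--C\'ordoba inequality does not deliver it.
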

 \noindent 
 {\bf Remarks:} 
 \begin{itemize}
 	\item The smallness condition $\|\nabla \tilde{\theta}\|_{L^{\f{2}{\al}}}<<1$ is guaranteed by Theorem \ref{theo:5} so long as we assume 
 	$\|f\|_{\dot{W}^{1-\al, \f{2}{\al}}}<<1$. 
 	\item It is important to emphasize that $v_0$ is not assumed to be small. That is, Theorem \ref{theo:10} is a true relaxation statement.  That is,  $\tilde{\theta}$ serves as one point attractor for the evolution of \eqref{10}. 
 	\item There are  much more precise results, particularly if one assumes $v_0\in L^p\cap L^2$ instead of  $v_0\in L^1\cap L^\infty$. In this sense, Theorem \ref{theo:10} is a representative corollary of these estimates. The interested reader is invited to consult Section \ref{sec:4}. 
 	\item Related to the previous point, we have a result for initial data $v_0\in L^2$ (but not in any other $L^p$ space), which does not guarantee any decay. See Corollary \ref{cor:1} or more precisely \eqref{90}. This is in line with the results in \cite{NS}, which establish that there might be arbitrarily slow decaying to zero solutions, when $f=0$.  
 		\item The estimate \eqref{14} provides a stronger uniqueness result for the stationary problem \eqref{20} as discussed earlier. Indeed, assuming that there is another solution of \eqref{20}, $\tilde{\theta}_1\in L^1\cap L^\infty$, not necessarily small, then it needs to satisfy \eqref{14}, which implies uniqueness. 
 	
 \end{itemize}
 In order to state the sharp decay results,  we will   need to argue in the weighted spaces. For any $m \geq 0$,  we define the Hilbert space $L^2(m)$ as follow
 \begin{equation}
 L^2(m)= \bigg\{ f \in L^2 : \  \|f\|_{L^2(m)}=  \bigg( \int_{\rtwo} (1+ |x|^2)^m |f(x)|^2 dx\bigg)^{\f{1}{2}} < \infty \bigg\}
 \end{equation}
 One can show by means of H\"older's,  $L^2(m)(\rtwo)  \hookrightarrow L^p(\rtwo)$, whenever $1 \leq  p \leq 2$.
 We have the following  {\it a posteriori} estimate in $L^2(m)$ spaces for the solution $\tilde{\theta}$. 
 \begin{proposition}({\it A posteriori} estimates for the steady state in weighted spaces)
 	\label{t:10} 
 	
 	Assume as in Theorem \ref{theo:5}, 
 	$f\in \dot{W}^{-\al,\f{2}{\al-1}}(\rtwo): \|f\|_{\dot{W}^{-\al,\f{2}{\al-1}}}<\eps_0$. Let $1<m<3-\al$ and assume in addition $f\in W^{1-\al,2}\cap W^{-\al,2}$, $\La^{1-\al} f\in L^2(m)$. Then, $\nabla\tilde{\theta}\in L^2(m)$. 	
 \end{proposition}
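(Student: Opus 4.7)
The plan is to differentiate the profile equation \eqref{20} and close a direct estimate for $\nabla\tilde{\theta}$ in the weighted space $L^2(m)$, using the a priori bounds already established in Theorem \ref{theo:5}. Since $\nabla\cdot\ku_{\tilde{\theta}}=0$, the profile equation can be rewritten as $\La^\al\tilde{\theta} = f - \nabla\cdot(\ku_{\tilde{\theta}}\tilde{\theta})$, so that
\begin{equation*}
\nabla\tilde{\theta} \;=\; \La^{1-\al}(Rf)\;-\; T(\ku_{\tilde{\theta}}\tilde{\theta}),
\end{equation*}
where $R=\nabla\La^{-1}$ is the Riesz transform and $T:=\La^{-\al}\nabla(\nabla\cdot)$ is a pseudo-differential operator of order $2-\al\in(0,1)$. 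The task therefore reduces to estimating these two pieces in $L^2(m)$.

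For the linear source I would exploit the hypothesis $\La^{1-\al}f\in L^2(m)$ together with the commutation $\La^{1-\al}R=R\La^{1-\al}$. For the bilinear term I would estimate the order $2-\al$ action of $T$ on $\ku_{\tilde{\theta}}\tilde{\theta}$ by a fractional Leibniz/Kato--Ponce-type estimate, distributing the weight across the two factors so that one carries the full weight while the other is controlled in an unweighted Lebesgue norm using the a priori bounds $\tilde{\theta}\in L^{2/(\al-1)}$ and $\nabla\tilde{\theta}\in L^{2/\al}$ from Theorem \ref{theo:5}, possibly upgraded via Sobolev embedding (for instance, to $\La^{2-\al}\tilde{\theta}\in L^2$). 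Smallness of the forcing, and hence of $\tilde{\theta}$, then allows the bilinear contribution to be absorbed as a small perturbation, either directly or through a contraction on a closed ball of $L^2(m)\cap L^{2/\al}$.

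The main obstacle is that for $m>1$ the weight $(1+|x|^2)^m$ lies outside the Muckenhoupt $A_2$ class, so standard boundedness of the Riesz transforms and related Calder\'on--Zygmund operators on $L^2(m)$ is not automatic. The workaround I anticipate is a Fourier-side approach: the weight $(1+|x|^2)^m$ corresponds under the Fourier transform to a positive power of $(1-\De_\xi)$, and one controls $\xi$-derivatives of $\wh{\tilde{\theta}}$ by differentiating the profile equation in frequency, in combination with a high--low frequency split. The lower bound $m>1$ yields $L^2(m)\hookrightarrow L^1(\rtwo)$, which is used when pairing the weighted norms against the a priori $L^p$ control of $\tilde{\theta}$, whereas the upper bound $m<3-\al$ arises naturally as the threshold ensuring integrability of the resulting weighted kernels and compatibility with the order $2-\al$ of the operator $T$ in the bilinear estimate.
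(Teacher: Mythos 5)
Your reformulation $\nabla\tilde{\theta} = \nabla\La^{-\al}f - \nabla^2\La^{-\al}(\ku_{\tilde{\theta}}\tilde{\theta})$ and the overall plan (absorb the bilinear piece by smallness of $\tilde{\theta}$, read off $m<3-\al$ as a kernel-decay threshold) match the skeleton of the paper's argument, and you correctly flag the central obstruction: $(1+|x|^2)^m$ with $m>1$ is not an $A_2$ weight in $\rtwo$, so no off-the-shelf weighted bound for the Riesz-type operators is available. The gap is that you never supply a mechanism that actually overcomes this obstruction --- and that mechanism is essentially the whole proof. Writing the linear term as $\La^{1-\al}(Rf)$ and commuting $R$ with $\La^{1-\al}$ leaves you needing $R$ bounded on $L^2(m)$, which is precisely what fails; for the bilinear term, ``distributing the weight across the two factors'' is the easy part, while the hard part is commuting the weight $|x|^m$ past the nonlocal operators $\nabla\La^{-\al}$ and $\nabla^2\La^{-\al}$. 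The Fourier-side workaround you sketch (controlling $\xi$-derivatives of $\widehat{\tilde{\theta}}$ by differentiating the profile equation in frequency) is not developed and faces real difficulties: $m$ is not an even integer, so the weight corresponds to a fractional power of $(1-\De_\xi)$; the symbols $|\xi|^{-\al}\xi_i\xi_j$ are singular at $\xi=0$ and their $\xi$-derivatives worsen that singularity; and the quadratic term becomes a convolution whose frequency differentiation reproduces the same commutator problem in dual form.

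For comparison, the paper's proof is entirely physical-space: it decomposes $\rtwo$ into dyadic annuli $|x|\sim 2^k$ via a partition of unity, so that $\||x|^m\nabla\tilde{\theta}\|_{L^2(|x|>1)}^2\sim\sum_{k\ge 0} 2^{2km}\|\nabla\tilde{\theta}_k\|_{L^2}^2$. For the source term it uses the pointwise domination $|\nabla\La^{-\al}g|\le C\,\La^{1-\al}|g|$ (both kernels are $O(|x-y|^{-(3-\al)})$, and $\La^{1-\al}$ is a positive fractional integral since $\al>1$), converting the problematic singular-integral bound into a fractional-integral bound. For the nonlinear term it keeps the divergence attached to $\tilde{\theta}$ and splits into a diagonal piece $\La^{1-\al}(\ku_{\tilde{\theta}}\cdot\nabla\tilde{\theta}_{\sim k})$, bounded by $C\eps_0\|\nabla\tilde{\theta}_{\sim k}\|_{L^2}$ via Sobolev embedding and absorbed by smallness, and off-diagonal pieces where the kernel decay $|x-y|^{-(4-\al)}$ on the annulus $|x|\sim 2^k$ yields the factor $2^{-k(3-\al)}\|\tilde{\theta}\|_{L^2}^2$; summing $2^{2km}$ against this is exactly where $m<3-\al$ enters. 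To complete your argument you would need either to reproduce this annular decomposition or to prove a weighted commutator bound of the type $\|[\,|x|^m,\nabla^2\La^{-\al}]g\|_{L^2}\le C\||x|^{m-(2-\al)}g\|_{L^2}$ in the spirit of Lemma \ref{L_-50}; as written, the decisive step is asserted rather than proved.
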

 \noindent 	{\bf Remark:} In fact, there is an explicit  {\it a posteriori} estimate, see \eqref{t:20} below, for $\tilde{\theta}$ which details the particular dependence of 
 	$\|\nabla\tilde{\theta}\|_{L^2(m)}$ on various norms involving $f$ as stated above. 
 	
 	\begin{theorem}(Sharpness of the  decay estimates) 
 		\label{theo:30} 
 		
 	Let  the assumptions in Proposition \ref{t:10} stand. Assume in addition,  $v_0=\theta_0-\tilde{\theta} \in   L^\infty \cap  L^2(m)(\rtwo)$. Then, for each $\eps>0$, there exists a constant $C=C_\eps$, so that \eqref{30} has an unique global solution $v\in C[(0, \infty), L^2(m)]$, which satisfies the decay estimate
 	\begin{equation}
 	\label{t:50} 
 	\left\|v(t, \cdot)-\f{\al_0}{(1+t)^{\f{2}{\al}}} G\left(\f{\cdot}{(1+t)^{\f{1}{\al}}}\right) \right\|_{L^p}\leq \f{C_\eps}{(1+t)^{\f{m+3}{\al}-1-\f{2}{\al p}}}, 1<p\leq 2,
 	\end{equation}
 	where $\al_0(x)=\int_{\rtwo}[\theta_0(x)-\tilde{\theta}(x)] dx$. 
 	In particular,  for $\al_0\neq 0$, $0<\eps<<m-1$, and large $t$,  
 	\begin{equation}
 	\label{t:60} 
 	\|v(t,\cdot)\|_{L^p}\geq \f{|\al_0|}{2} \left\| (1+t)^{-\f{2}{\al}}  G\left(\f{\cdot}{(1+t)^{\f{1}{\al}}}\right) \right\|_{L^p} \sim (1+t)^{-\f{2}{\al}(1-\f{1}{p})}, 1<p\leq 2.
 	\end{equation}
 	\end{theorem}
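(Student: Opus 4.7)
The strategy is the method of scaling (self-similar) variables, in the spirit of Gallay--Wayne, adapted to the fractional diffusion setting and perturbed by the steady state. Introduce $\xi = x(1+t)^{-1/\al}$, $\tau = \log(1+t)$, and define the rescaled deviation and steady state
\beqn
w(\xi,\tau) := (1+t)^{2/\al}\, v\!\left((1+t)^{1/\al}\xi,\,t\right),\qquad \wt W(\xi,\tau) := (1+t)^{2/\al}\,\wt\theta\!\left((1+t)^{1/\al}\xi\right).
\eeqn
Since the Riesz transforms are scale-invariant of order zero, $\ku_v$ inherits the same scaling as $v$. A direct calculation shows that \eqref{30} is equivalent to
\beqn
w_\tau = \cl w - e^{\frac{\al-3}{\al}\tau}\Bigl[\ku_{\wt W}\cdot\nabla_\xi w + \ku_w\cdot\nabla_\xi\wt W + \ku_w\cdot\nabla_\xi w\Bigr],
\eeqn
where $\cl := -\La^\al + \frac{1}{\al}(\xi\cdot\nabla + 2)$. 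The divergence-form structure of \eqref{30} preserves mass, so $\int_{\rtwo} w(\xi,\tau)\,d\xi \equiv \al_0$.

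The key linear fact is a spectral gap for $\cl$ on $L^2(m)$. The self-similar profile $G$---the fundamental solution of $\p_t + \La^\al = 0$ at unit time---spans the kernel of $\cl$, while on the closed invariant subspace $\{h\in L^2(m) : \int h = 0\}$ one has $\|e^{\tau\cl}h\|_{L^2(m)} \lesssim e^{-\sigma_m\tau}\|h\|_{L^2(m)}$ for a positive $\sigma_m = \sigma_m(m,\al)$ whose value, after undoing the scaling, matches the exponent in \eqref{t:50}. This proceeds via the Gallay--Wayne--type spectral analysis for fractional heat equations: the point spectrum consists of $\{-k/\al\}_{k\geq 0}$ with eigenfunctions generated by derivatives of $G$, and the essential-spectrum boundary recedes linearly in $m$, which dictates the admissible range $1<m<3-\al$. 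Decompose $w = \al_0 G + r$, so that $r$ has zero mean for every $\tau$ and satisfies
\beqn
r(\tau) = e^{(\tau-\tau_0)\cl}\, r(\tau_0) + \int_{\tau_0}^{\tau} e^{(\tau - s)\cl}\,\cn(r(s),\wt W(s), s)\,ds,
\eeqn
with $\cn$ collecting the three bracketed forcing terms (with $w$ replaced by $\al_0 G + r$ where appropriate).

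The core of the proof is to bound $\|\cn(s)\|_{L^2(m)}$ so that Duhamel closes against the decaying semigroup. The purely quadratic term $\ku_w\cdot\nabla_\xi w$ is controlled by pairing the $L^p$-decay from Theorem \ref{theo:10} (which, after rescaling, manifests as uniform-in-$\tau$ bounds on $w$ in several $L^p$ scales) with the decaying prefactor $e^{(\al-3)\tau/\al}$. The interaction terms involving $\wt W$ are more delicate: because the rescaled steady state concentrates at the origin, $\|\nabla_\xi \wt W\|_{L^2(m)}$ in fact grows like $e^{\tau/\al}$; nevertheless the prefactor $e^{(\al-3)\tau/\al}$ beats this growth and produces the integrable-in-$\tau$ factor $e^{(\al-2)\tau/\al}$ (integrability hinging on $\al<2$). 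This is precisely where the {\it a posteriori} weighted estimate $\nabla\wt\theta\in L^2(m)$ from Proposition \ref{t:10} is indispensable. The linear-in-$w$ perturbation $\ku_{\wt W}\cdot\nabla_\xi w$ is absorbed into the semigroup estimate via H\"older pairing together with the smallness hypothesis $\|\nabla\wt\theta\|_{L^{2/\al}}<\ve_0$. A Gronwall/bootstrap argument (once global existence of $v\in C([0,\infty), L^2(m))$ has been upgraded from Theorem \ref{theo:10} by a weighted energy estimate) then yields $\|r(\tau)\|_{L^2(m)} \leq C_\ve e^{-(\sigma_m - \ve)\tau}$, the $\ve$-loss reflecting the need to stay away from the essential spectrum and from the variable-coefficient perturbation.

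Finally, the embedding $L^2(m)(\rtwo)\hookrightarrow L^p(\rtwo)$ for $1<p\leq 2$ (valid since $m>1$ in dimension two) gives $\|r(\tau)\|_{L^p}\lesssim e^{-(\sigma_m - \ve)\tau}$. Undoing the scaling via $\|v(t) - \al_0(1+t)^{-2/\al}G(\cdot/(1+t)^{1/\al})\|_{L^p} = (1+t)^{-(2/\al)(1-1/p)}\|r(\tau)\|_{L^p}$ delivers \eqref{t:50}, and the lower bound \eqref{t:60} for $\al_0\neq 0$ follows immediately by the triangle inequality and the non-vanishing of $G$ in $L^p$. The principal obstacle is the third step above: the rescaled steady state $\wt W$ is not small in the weighted norms used (only in the scale-invariant $L^{2/\al}$ sense), so $\ku_{\wt W}\cdot\nabla_\xi w$ must be treated as a genuine time-dependent variable-coefficient perturbation of $\cl$ rather than as a small source, and it is exactly this treatment that forces both the smallness assumption on $\wt\theta$ and the arbitrarily small $\ve$-loss in the decay exponent of \eqref{t:50}.
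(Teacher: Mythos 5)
Your overall strategy coincides with the paper's: pass to self-similar variables, split off the projection onto the eigenfunction $G$, run Duhamel against the semigroup $e^{\tau\cl}$ on $L^2(m)$ using its spectral gap on the mean-zero subspace, close with a Gronwall iteration, and then transfer back via $L^2(m)\hookrightarrow L^p$ and the triangle inequality for the lower bound. The difference in normalization (you place $G$ in the kernel of $\cl$ and carry the explicit prefactor $e^{(\al-3)\tau/\al}$ on the nonlinearity, whereas the paper keeps $\cl G=(1-\f{3}{\al})G$ and lets $V,\Theta$ themselves carry the decay) is immaterial.

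There is, however, a concrete gap in your treatment of the interaction term $\ku_w\cdot\nabla_\xi\wt W$. You assert that $\|\nabla_\xi\wt W\|_{L^2(m)}$ grows like $e^{\tau/\al}$, so that the prefactor $e^{(\al-3)\tau/\al}$ yields the decaying factor $e^{(\al-2)\tau/\al}$. This is not correct: since $\nabla_\xi\wt W(\xi)=(1+t)^{3/\al}(\nabla\tilde\theta)((1+t)^{1/\al}\xi)$, already the unweighted norm satisfies $\|\nabla_\xi\wt W\|_{L^2}=(1+t)^{2/\al}\|\nabla\tilde\theta\|_{L^2}$, hence $\|\nabla_\xi\wt W\|_{L^2(m)}\gtrsim e^{2\tau/\al}$: the rescaled steady state's gradient concentrates at the origin, where the weight gives nothing back. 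The naive H\"older pairing then produces $e^{(\al-3)\tau/\al}\|\ku_w\|_{L^\infty}\|\nabla_\xi\wt W\|_{L^2(m)}\gtrsim e^{(\al-1)\tau/\al-\de\tau}$, a growing source term, and Duhamel does not close. The paper's remedy (proof of Proposition \ref{lem_00}) is to split $\nabla\Theta=\chi\,\nabla\Theta+(1-\chi)\nabla\Theta$ with $\chi$ supported near the origin: in the inner region one integrates by parts to move the derivative onto the semigroup, so that only $\|\Theta\|_{L^2}\sim e^{(1-\f{2}{\al})\tau}$ enters; in the outer region the weight converts into $|x|^m(1+t)^{-m/\al}$ and the {\it a posteriori} bound $\||x|^m\nabla\tilde\theta\|_{L^2}<\infty$ from Proposition \ref{t:10} supplies the missing factor $e^{-m\tau/\al}$. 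Without this splitting (or an equivalent device) your third step fails, and this is exactly the point where the weighted estimate on $\nabla\tilde\theta$ --- which you correctly flag as indispensable --- actually has to be deployed.
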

 	 {\bf Remarks:} 
 	 \begin{itemize} 
 	 	\item The estimate \eqref{t:60} shows that \eqref{14} is sharp, whenever $\int_{\rtwo}[\theta_0(x)-\tilde{\theta}(x)] dx\neq 0$. 
 	\item The extra localization $v_0\in L^2(m), m>1$ guarantees $v_0\in L^1(\rtwo)$.  
 	\item It is possible to state estimates similar to \eqref{t:50}, which shows the sharpness of the decay estimates for $\|\theta(t, \cdot)-\tilde{\theta}(\cdot)\|_{L^p}$ for at least some $p>2$, but we will not do so here. 
 	 \end{itemize}

 The plan for the paper is as follows. In Section \ref{sec:2}, we first introduce some basics -  function spaces, Fourier multipliers and fractional derivatives and fractional integral operators. Next, we state and prove some properties of the Green's function of the fractional Laplacian, as well as some commutator estimates, which may be of independent interest. Lastly, we present a version of the Gronwall's lemma. In Section \ref{sec:3}, we present the details of the construction of the steady state, together with the necessary {\it a posteriori} estimates. In particular, one may find there the proofs of Theorem \ref{theo:5} and Proposition \ref{t:10}, which are mere corollaries of the more general results of this section. In Section \ref{sec:4.1}, we introduce the scaled variables for the problem.  The main advantage is that in these variables, the governing partial differential equation    is a parabolic PDE, driven by a (non-self adjoint) differential operator with purely negative spectrum, which enjoys the spectral gap property. We present a complete spectral analysis of the involved operators and the corresponding semi-group estimates, by partially relying on our previous work \cite{HS}. In Section \ref{sec:5}, we analyze the dynamics of  
 \eqref{30} in the $L^p$ setting, provided the conditions on $f$ guarantee the existence of an appropriate steady state $\tilde{\theta}$.  In particular,   the proof of Theorem \ref{theo:10} is presented. This is done by establishing appropriate $L^2$, $L^p, 2<p<\infty$ and then $L^\infty$ bounds for $v$, based on energy estimates in the unweighted spaces. These turn out to be sharp, based on the results of the next section. Importantly, it turns out that the scaled variables turn out to be an efficient medium for obtaining sharp estimates in unweighted $L^p$ spaces, even though their properties somehow suggest that they might be best  used in the weighted context. 
  Finally in Section \ref{sec:6}, we present an argument, based on energy estimates in weighted spaces $L^2(m), 1<m<3-\al$, which guarantees that the upper bounds for the decay rates are in fact optimal. This is justified by explicitly isolating the leading order term (decay wise) for the scaled variable $V$.

  \section{Preliminaries}
  \label{sec:2} 
  \subsection{Function spaces,  Fourier Transform, and multipliers} 
  \label{sec:2.1} 
  The Lebesgue $L^p$ spaces are defined by the norm   $  \|f\|_{L^p}= \bigg( \int |f(x)|^p\ dx\bigg)^{\f{1}{p}}$, while  the weak 
  $L^p$ spaces are 
  \begin{equation*}
  L^{p, \infty}= \left\{f: \|f\|_{L^{p, \infty}}=\sup_{\la>0} \bigg \{\la \ |\{x: |f(x)|> \la\} |^{\f{1}{p}} \bigg\} < \infty \right\}.
  \end{equation*}
  The Fourier transform and its inverse are taken in the form 
  $$
  \hat{f}(\xi)= \int_{\rn} f(x) e^{-i x\cdot \xi} dx, \ \ f(x) = (2\pi)^{-n} \int_{\rn} \hat{f}(\xi) e^{i x\cdot \xi} d\xi
  $$
  Consequently, since $\widehat{-\De f}(\xi) := |\xi|^2 \hat{f}(\xi)$, and as pointed out already, 
   the fractional differentiation operators are introduced via 
    $\La^a:=(-\De)^{a/2}, a>0$. Equivalently, 
  its action on the Fourier side is $\widehat{\La^a f}(\xi)= |\xi|^a \hat{f}(\xi)$.  
   In this context, recall the  Hausdorff--Young inequality which reads as follows: For 
  $p,  q, r \in (1, \infty)$ and $1+ \f{1}{p}= \f{1}{q}+ \f{1}{r}$  
  \begin{equation*}
  \|f * g\|_{L^p} \leq C_{p,q,r} \|f\|_{L^{q, \infty}} \|g\|_{L^r}.
  \end{equation*} 
  
  For an integer $n$ and $p\in (1, \infty)$,  the Sobolev spaces are the closure of the Schwartz functions in the norm $\|f\|_{W^{k, p}}= \|f\|_{L^p}+ \sum_{|\al| \leq k} \|\partial^{\al} f\|_{L^p}$,  while for a non-integer $s$  
  \begin{equation*}
  \|f\|_{W^{s, p}}= \|(1- \De)^{s/2} f\|_{L^p}\sim \|f\|_{L^p}+ \|\La^s  f\|_{L^p}.
  \end{equation*}
  We also need the homogeneous versions of it, with semi-norms  
  $\|f\|_{\dot{W}^{s, p}}= \|\La^s f\|_{L^p}$. 
  The Sobolev embedding theorem states $\|f\|_{L^p(\rn)} \leq C \|f\|_{\dot{W}^{s,q}(\rn)}$, where $1<p<q<\infty$ and 
  $n (\f{1}{p}- \f{1}{q})= s$, with the usual modification for $p=\infty$, namely 
  $\|f\|_{L^\infty(\rn)} \leq C_s \|  f\|_{W^{s,q}(\rn)}$, $s> \f{n}{p}$.
  More generally, for smooth symbols  $m$, with the property $|m(\xi)|\sim  |\xi|^s$, we have 
  \begin{equation}
  \label{m} 
  \|T_{m^{-1}} f\|_{L^p(\rn)} \leq C \|f\|_{L^q}
  \end{equation}
  where $n (\f{1}{p}- \f{1}{q})= s$ and $\widehat{T_{m^{-1}} f}(\xi)=m^{-1} (\xi) \hat{f}(\xi)$. 
  
  Finally, due to the failure of the Sobolev space $H^1(\rtwo)$ to embed in $L^\infty(\rtwo)$, we record the following modification of it:  $H^{1-\de}(\rtwo)\cap H^{1+\de}(\rtwo)\hookrightarrow L^\infty(\rtwo), \de>0$. In terms of estimates, for all $\de>0$, there exists $C_\de$, so that 
  \begin{equation}
  \label{a:40} 
  \|f\|_{L^\infty(\rtwo)}\leq C(\|f\|_{H^{1-\de}(\rtwo)}+ \|f\|_{H^{1+\de}(\rtwo)}). 
  \end{equation}

  \subsection{The fractional derivatives and anti-derivatives}
  We start by recording the following kernel representation formula for negative powers of Laplacian. This is nothing, but a fractional integral,  for $\al\in (0,2)$, 
  \begin{equation}
  \label{921}
  \La^{-\al} f(x) = c_a \int_{\rtwo} \f{f(y)}{|x-y|^{2-\al}} dy.
  \end{equation}
  Next, for positive powers, we have similar formula. More specifically, for  $ \al\in (0,2)$, 
  $$
 \La^\al f (x) = C_a p.v. \int_{\rtwo} \f{f(x)-f(y)}{|x-y|^{2+\al}} dy. 
  $$
  see Proposition 2.1, \cite{CC}). 
  Next, we have the following result, due to Chamorro and Lemari\'{e}-Rieusset, see Theorem 3.2, \cite{CL}, although for earlier version,   one may consult  Lemmas 2.4, 2.5 in \cite{CC}.  
  \begin{equation}
  \label{a:31} 
  \int_{\rn} |f(x)|^{p-2} f(x) [\La^a f](x) dx\geq  C_p \|f\|_{L^{\f{2p}{2-\al}}}^p.
  \end{equation} 
  \begin{lem}
  	\label{le:90}
  	For $p: 1\leq p<\infty$, $\al\in [0,2]$ , $n\geq 1$, 
  	\begin{equation}
  	\label{a:20} 
  	\int_{\rn} |f(x)|^{p-2} f(x) [\La^\al f](x) dx\geq 0.
  	\end{equation}
  	If in addition, $p\geq 2$, there is the stronger coercivity estimate 
  	\begin{equation}
  	\label{a:30} 
  	\int_{\rn} |f(x)|^{p-2} f(x) [\La^a f](x) dx\geq  \f{1}{p} \|\La^{\f{\al}{2}}[|f|^{\f{p}{2}-1}f]\|_{L^2(\rn)}^2.
  	\end{equation}
  \end{lem}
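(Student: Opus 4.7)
The endpoint case $\al=0$ is trivial and $\al=2$ follows from integration by parts combined with $\nabla(|f|^p)=p|f|^{p-2}f\nabla f$ (yielding the classical bound $\int |f|^{p-2}f(-\De f)\,dx=(p-1)\int |f|^{p-2}|\nabla f|^2\,dx$). For the remaining range $\al\in(0,2)$, I would start from the singular-integral representation recalled in the preceding paragraph,
\[
\La^\al f(x)=C_{n,\al}\,\pv\int_{\rn}\f{f(x)-f(y)}{|x-y|^{n+\al}}\,dy,
\]
for Schwartz $f$, multiply by $|f(x)|^{p-2}f(x)$, integrate in $x$, swap the dummy variables $x\leftrightarrow y$, and average to obtain the symmetric double-integral identity
\[
\int_{\rn}|f|^{p-2}f\,\La^\al f\,dx=\f{C_{n,\al}}{2}\iint_{\rn\times\rn}\f{\big[|f(x)|^{p-2}f(x)-|f(y)|^{p-2}f(y)\big]\big[f(x)-f(y)\big]}{|x-y|^{n+\al}}\,dx\,dy.
\]
The general case follows by density once both sides are shown to be well defined.

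For \eqref{a:20}, the map $s\mapsto|s|^{p-2}s$ is monotone nondecreasing on $\R$ for every $p\geq 1$, so the bracketed product in the symmetrized integrand has the same sign as $(f(x)-f(y))^2$; hence the integrand is pointwise nonnegative and the bound follows.

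For \eqref{a:30}, the heart of the matter is the elementary pointwise inequality
\[
(|a|^{p-2}a-|b|^{p-2}b)(a-b)\geq \f{2}{p}\big(|a|^{p/2-1}a-|b|^{p/2-1}b\big)^2,\qquad p\geq 2,\ a,b\in\R,
\]
which I would verify by writing both differences as integrals, $|a|^{p-2}a-|b|^{p-2}b=(p-1)\int_b^a|s|^{p-2}ds$ and $|a|^{p/2-1}a-|b|^{p/2-1}b=\f{p}{2}\int_b^a|s|^{p/2-1}ds$, and applying Cauchy--Schwarz to the latter (after factoring $|s|^{p-2}=|s|^{p/2-1}\cdot|s|^{p/2-1}$). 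Substituting this pointwise bound into the symmetrized identity above with $g:=|f|^{p/2-1}f$ yields
\[
\int |f|^{p-2}f\,\La^\al f\,dx\geq \f{C_{n,\al}}{p}\iint_{\rn\times\rn}\f{(g(x)-g(y))^2}{|x-y|^{n+\al}}\,dx\,dy=\f{1}{p}\|\La^{\al/2}g\|_{L^2(\rn)}^2,
\]
since the Gagliardo-type double integral evaluates via Plancherel to the $\dot H^{\al/2}$ semi-norm with exactly the constant $C_{n,\al}$ of the singular-integral representation.

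The main technical obstacle is confirming the sharp constant $2/p$ in the elementary pointwise inequality; by positive $1$-homogeneity in $(a,b)$ one reduces to the one-variable claim $(t^{p-1}-1)(t-1)\geq \f{2}{p}(t^{p/2}-1)^2$ for $t\geq 0$ (and a sign check on the mixed-sign case), which is a routine calculus exercise---its asymptotics near $t=1$ give the tightest constraint and show $2/p$ is comfortably below the sharp value. The remaining technicalities (finiteness of the double integral, validity of Fubini, and the density argument to reach general $f$ for which the right-hand side of \eqref{a:30} is finite) are standard and handled by truncation and Fatou.
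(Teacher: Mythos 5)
Your argument is correct, and it is worth noting that the paper itself offers no proof of this lemma: it is quoted as known, with the representation formula attributed to Proposition 2.1 of \cite{CC} and the coercivity statements to \cite{CL} and Lemmas 2.4--2.5 of \cite{CC}. What you write is essentially the standard route in those references (symmetrize the singular-integral representation of $\La^\al$ into a Gagliardo-type double integral, then apply a pointwise Stroock--Varopoulos-type inequality), so it is a legitimate self-contained substitute. Two small remarks. First, your Cauchy--Schwarz computation on $|a|^{p-2}a-|b|^{p-2}b=(p-1)\int_b^a|s|^{p-2}\,ds$ and $|a|^{p/2-1}a-|b|^{p/2-1}b=\f{p}{2}\int_b^a|s|^{p/2-1}\,ds$ actually yields the constant $\f{4(p-1)}{p^2}$, which dominates $\f{2}{p}$ exactly when $p\geq 2$; this is the cleanest way to close the "routine calculus exercise" you defer, and it handles the mixed-sign case automatically since the integrals run over $[b,a]\subset\R$. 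Second, there is a harmless bookkeeping slip in your final display: since the same symmetrization applied to $g$ gives $\|\La^{\al/2}g\|_{L^2}^2=\f{C_{n,\al}}{2}\iint (g(x)-g(y))^2|x-y|^{-n-\al}\,dx\,dy$, your chain in fact produces the lower bound $\f{2}{p}\|\La^{\al/2}g\|_{L^2}^2$ rather than $\f{1}{p}\|\La^{\al/2}g\|_{L^2}^2$ with equality in the last step; this is stronger than \eqref{a:30}, so nothing is lost, but the asserted equality is off by a factor of $2$.
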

  	In particular, for $n=2$, by combining \eqref{a:30} with Sobolev embedding, one gets 
  	\begin{equation}
  	\label{a:34} 
  	\int_{\rtwo} |f(x)|^{p-2} f(x) [\La^a f](x) dx\geq C \|f\|_{L^{\f{2p}{2-\al}}(\rtwo)}^p,
  	\end{equation}
for some constant $C$ depending on $p,\al$. 
 We also need the following commutator estimate.

 \begin{lem} 
 	\label{L_-50} 
 	Let  $0<s<1<\si$. Then,  there is $C=C_{s,\si}$, so that 
 	\begin{equation}
 	\label{310} 
 	\|[\La^{s},|\eta|^\si]  f\|_{L^2(\rtwo)}\leq C 
 	\| |\eta|^{\si-s} f\|_{L^2(\rtwo)}.
 	\end{equation}
 \end{lem}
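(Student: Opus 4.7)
The plan is to establish the commutator bound through the singular integral representation of the fractional Laplacian. For $0<s<1$ the pointwise formula
\[
[\Lambda^{s},|\eta|^\sigma] f(\eta) \;=\; C_s \int_{\rtwo} \frac{(|\eta|^\sigma - |\zeta|^\sigma)\,f(\zeta)}{|\eta-\zeta|^{2+s}}\, d\zeta
\]
holds in the absolute sense: because $\sigma \geq 1$, the numerator vanishes to first order at the diagonal, and $|\eta-\zeta|^{-1-s}$ is locally integrable in $\rtwo$ for $s<1$, so no principal value is needed. Setting $F(\zeta):=|\zeta|^{\sigma-s} f(\zeta)$, the claimed inequality is equivalent to the $L^2(\rtwo)\to L^2(\rtwo)$ boundedness of the integral operator $T$ with kernel
\[
\tilde K(\eta,\zeta) \;=\; \frac{(|\eta|^\sigma-|\zeta|^\sigma)\,|\zeta|^{s-\sigma}}{|\eta-\zeta|^{2+s}},
\]
which a quick check shows is homogeneous of degree $-2$, i.e.\ scale-invariant on $\rtwo\times\rtwo$.

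Next I would decompose $\rtwo = R_1 \cup R_2 \cup R_3$ with $R_1 = \{|\zeta| < |\eta|/2\}$, $R_2 = \{|\zeta| > 2|\eta|\}$, and $R_3 = \{|\eta|/2 \leq |\zeta| \leq 2|\eta|\}$. On the near-diagonal region $R_3$, the mean-value estimate $||\eta|^\sigma - |\zeta|^\sigma| \leq C_\sigma \max(|\eta|,|\zeta|)^{\sigma-1}|\eta-\zeta|$ reduces $\tilde K$ to the kernel of a fractional integration of order $1-s$ restricted to an annulus where $|\eta|\sim|\zeta|$; the $L^2$-bound on this piece then follows from Hardy--Littlewood--Sobolev after absorbing the scalar factor $|\eta|^{s-1}|\zeta|^{\sigma-1-(\sigma-s)}$. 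On $R_1$ and $R_2$, the comparison $|\eta-\zeta|\sim\max(|\eta|,|\zeta|)$ yields explicit algebraic control of $|\tilde K|$, and a Cauchy--Schwarz argument in polar coordinates, combined with the exact powers of $|\eta|$ and $|\zeta|$ coming out of the homogeneity, closes that contribution.

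The cleanest way to execute all three pieces together is to exploit the degree $-2$ homogeneity of $\tilde K$ by writing $\eta = r\omega$, $\zeta = \rho\nu$ and applying the Mellin transform in the radial variables. Mellin--Plancherel converts $T$ into a family $\{T_\tau\}_{\tau\in\rone}$ of operators on $L^2(S^1)$, with kernels that have at worst an $|\omega-\nu|^{-s}$ singularity on the diagonal, an integrable singularity for $s<1$. Uniform $L^2(S^1)$-boundedness of the family in $\tau$ is then a routine consequence of the classical theory of Calder\'on--Zygmund integrals on the circle, and Plancherel returns the desired estimate for $T$ on $L^2(\rtwo)$.

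The main obstacle is exactly this scale invariance of $\tilde K$: because it is homogeneous of degree $-2$, the row and column $L^1$-sums that enter the Schur test both diverge logarithmically, so naive pointwise domination cannot work. One must genuinely use the cancellation encoded in the numerator $|\eta|^\sigma-|\zeta|^\sigma$, either through the Mellin--Plancherel viewpoint sketched above or through a careful dyadic decomposition that exploits this cancellation on each scale. With that cancellation in hand, the three-region split gives a complete, uniform bound with constant $C=C_{s,\sigma}$.
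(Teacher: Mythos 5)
Your reduction to the $L^2$-boundedness of the operator with the degree $-2$ homogeneous kernel $\tilde K(\eta,\zeta)=(|\eta|^\si-|\zeta|^\si)\,|\zeta|^{s-\si}\,|\eta-\zeta|^{-2-s}$ is correct, and your treatment of the near-diagonal region $R_3$ and of $R_2=\{|\zeta|>2|\eta|\}$ (where $|\tilde K|\lesssim |\zeta|^{-2}$ and the Schur test with weight $|\zeta|^{-1}$ closes) is sound; this is essentially a continuous reformulation of the paper's dyadic decomposition $|\eta|^\si=\sum_k 2^{k\si}\Psi(2^{-k}\eta)$ with its three cases $k\gg l$, $k\approx l$, $k\ll l$. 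The gap is in $R_1=\{|\zeta|<|\eta|/2\}$. There the numerator has no cancellation left, $|\eta|^\si-|\zeta|^\si\sim|\eta|^\si$, so $|\tilde K|\sim |\eta|^{\si-2-s}|\zeta|^{s-\si}$ on $R_1$, and the weighted Schur test you would need requires $\int_{|\zeta|<1}|\zeta|^{s-\si}\,|\zeta|^{-1}\,d\zeta<\infty$, i.e.\ $\si<1+s$; equivalently, the Mellin symbol $\int_0^\infty \tilde K(\omega,t\nu)\,dt$ that your Mellin--Plancherel route relies on diverges at $t=0$ like $\int_0 t^{s-\si}\,dt$ as soon as $\si\geq 1+s$. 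So neither the ``Cauchy--Schwarz in polar coordinates'' step nor the Mellin argument closes $R_1$ in the full stated range $\si>1$.

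This is not a repairable defect of your method: the inequality itself fails when $\si>1+s$. Take $f=\chi_{B_\eps}$; for $|\eta|=1$ the representation formula gives $[\La^s,|\cdot|^\si]f(\eta)=c_s\int_{B_\eps}(1-|y|^\si)\,|\eta-y|^{-2-s}\,dy\sim \eps^2$, so the left side of \eqref{310} is $\gtrsim\eps^2$, while the right side is $\sim\eps^{\si-s+1}$, and $\eps^{2}/\eps^{\si-s+1}\to\infty$ as $\eps\to0$ whenever $\si>1+s$. Under the additional (necessary) hypothesis $\si<1+s$ your argument does go through, and in fact a single Schur test with weight $|\zeta|^{-1}$ then handles all three regions simultaneously, so you should record that restriction explicitly. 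For what it is worth, the paper's own proof meets the same obstruction: in its near-diagonal case $|l-k|\leq 10$ the $y$-integration is restricted to $|y|\sim 2^k$, which silently discards the term $\Psi(2^{-k}\eta)\int_{|y|\ll 2^k} f(y)\,|\eta-y|^{-2-s}\,dy$, precisely the $R_1$ contribution above, so the restriction $\si<1+s$ is needed there as well.
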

 We provide the slightly technical proof of Lemma \ref{L_-50} in the Appendix. We should also mention that it is roughly based on the approach for Lemma 11, \cite{HS}. 

  \subsection{The function $G$ and a variant of the Gronwall's inequality} 
  \label{sec:g}
  The function $G$ defined by $\hat{G}(\xi)=e^{-|\xi|^\al}, \xi\in \rtwo$ will be used frequently in the sequel.  Its straightforward proof can be found in \cite{HS}. 
  \begin{lem}
  	\label{le:10} 
  	For any $p \in [2, \infty]$ and $\al\in (1,2)$, 
  	\begin{equation}
  	\label{GG}
  	(1+ |\eta|^2)\ G(\eta), (1+ |\eta|^2)\nabla G(\eta) \in L_{\eta}^p
  	\end{equation} 
  	In particular, $G, \nabla G\in L^1(\rtwo)\cap L^\infty(\rtwo)$. 
  \end{lem}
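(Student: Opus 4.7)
The plan is to reduce everything to pointwise decay estimates for $G$ and $\nabla G$ coming from the regularity/decay structure of $\hat{G}(\xi)=e^{-|\xi|^\al}$, and then to harvest the $L^p$ claims by direct size bounds (and a Cauchy--Schwarz step for the $L^1$ conclusion).

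Step 1 (pointwise decay). I would show the asymptotic bounds
\[
|G(\eta)|\le \f{C}{(1+|\eta|)^{2+\al}}, \qquad |\nabla G(\eta)|\le \f{C}{(1+|\eta|)^{3+\al}}.
\]
Boundedness on compact sets is clear since $\hat G\in L^1(\rtwo)$. For the tails, localize $\hat G(\xi)=\chi(\xi)e^{-|\xi|^\al}+(1-\chi(\xi))e^{-|\xi|^\al}$, with $\chi\in\coi$ equal to $1$ near the origin. The high-frequency piece $(1-\chi)e^{-|\xi|^\al}$ is Schwartz (smooth, with Gaussian-type decay at infinity), so its Fourier transform is Schwartz and contributes only rapidly decreasing terms. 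The low-frequency piece has compact support and is smooth off the origin, with singular part dictated by the Taylor expansion $e^{-|\xi|^\al}=1-|\xi|^\al+O(|\xi|^{2\al})$. Since the smooth remainder is again Schwartz, the decay of $G$ is driven by $\cf^{-1}[c\,\chi(\xi)|\xi|^\al]$, whose inverse Fourier transform decays exactly like $|\eta|^{-(2+\al)}$ (this is the classical stable-law tail in dimension $2$; alternatively, this can be derived via subordination, writing $e^{-|\xi|^\al}=\int_0^\infty e^{-s|\xi|^2}\,\mu_\al(ds)$ with the one-sided stable density $\mu_\al$). Multiplying by $i\xi$ and repeating yields the sharper decay for $\nabla G$.

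Step 2 ($L^p$ bounds). From Step 1,
\[
(1+|\eta|^2)|G(\eta)|\le \f{C}{(1+|\eta|)^{\al}},\qquad (1+|\eta|^2)|\nabla G(\eta)|\le \f{C}{(1+|\eta|)^{1+\al}},
\]
which are manifestly in $L^\infty(\rtwo)$. The $L^p$ membership for $2\le p<\infty$ reduces, in polar coordinates, to checking $\int_1^\infty r^{1-p\al}\,dr<\infty$ and $\int_1^\infty r^{1-p(1+\al)}\,dr<\infty$, both of which hold since $\al\in(1,2)$ gives $p\al\ge 2\al>2$ and $p(1+\al)\ge 2(1+\al)>2$. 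Interpolating (or noting monotonicity of the integrands) gives the full range $p\in[2,\infty]$.

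Step 3 ($L^1\cap L^\infty$ corollary and main difficulty). The $L^\infty$ bound follows at once from $|G|\le (1+|\eta|^2)^{-1}\|(1+|\eta|^2)G\|_{L^\infty}$, similarly for $\nabla G$. For $L^1$, Cauchy--Schwarz gives
\[
\|G\|_{L^1(\rtwo)}\le \|(1+|\eta|^2)^{-1}\|_{L^2(\rtwo)}\,\|(1+|\eta|^2)G\|_{L^2(\rtwo)},
\]
and the first factor is finite because $\int_0^\infty r(1+r^2)^{-2}\,dr<\infty$ in dimension two; the same argument handles $\nabla G$. The only real obstacle is Step 1: the algebraic tail $|\eta|^{-(2+\al)}$ requires identifying the leading singular term $|\xi|^\al$ in the expansion of $\hat G$ at the origin and computing its Fourier transform distributionally. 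This is standard for symmetric $\al$-stable densities, so the argument is essentially an exercise in stationary phase / subordination once the decomposition is in place.
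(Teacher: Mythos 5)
Your argument is correct, but it goes by a genuinely different (and stronger) route than the paper, which does not prove Lemma \ref{le:10} at all: it simply cites \cite{HS}, where the ``straightforward proof'' is, consistently with the restriction $p\in[2,\infty]$ in the statement, the Hausdorff--Young argument. Namely, one writes $(1+|\eta|^2)G=\cf^{-1}\bigl[(1-\De_\xi)e^{-|\xi|^{\al}}\bigr]$, computes $\De_\xi e^{-|\xi|^{\al}}=\al^2\bigl(|\xi|^{2\al-2}-|\xi|^{\al-2}\bigr)e^{-|\xi|^{\al}}$, and observes that the worst singularity $|\xi|^{\al-2}$ lies in $L^{p'}_{\rm loc}(\rtwo)$ exactly when $p'(2-\al)<2$, which holds for all $p'\le 2$ since $\al>1$; Hausdorff--Young then gives \eqref{GG} for $p\in[2,\infty]$, and the same computation applied to $i\xi e^{-|\xi|^{\al}}$ handles $\nabla G$. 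You instead establish the pointwise stable-density tails $|G(\eta)|\lesssim (1+|\eta|)^{-(2+\al)}$, $|\nabla G(\eta)|\lesssim(1+|\eta|)^{-(3+\al)}$ and read off the $L^p$ claims; this is more work but yields strictly more information (sharp pointwise decay, and membership in $L^p$ for all $p>2/\al$, not just $p\ge2$). One imprecision in your Step 1: the remainder $\chi(\xi)\bigl(e^{-|\xi|^{\al}}-1+|\xi|^{\al}\bigr)=\chi(\xi)\,O(|\xi|^{2\al})$ is \emph{not} Schwartz (it is only finitely differentiable at the origin unless $2\al$ is an even integer), so you cannot dismiss it as contributing rapidly decreasing terms. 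It is easily repaired: applying $\De_\xi^2$ produces an $O(|\xi|^{2\al-4})$ singularity, which is integrable in $\rtwo$ precisely because $\al>1$, so the remainder's inverse Fourier transform is $O(|\eta|^{-4})$, and $4>2+\al$ since $\al<2$; alternatively your subordination representation (with the one-sided $\al/2$-stable subordinator, not $\al$-stable) gives the tail bound directly. With that patch, Steps 2 and 3, including the Cauchy--Schwarz derivation of the $L^1$ membership from the $p=2$ case, are complete and correct.
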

  \noindent Note that $\ku_G\in L^\infty$, since 
  $$
  \|\ku_G\|_{L^\infty}\leq C \|\ku_G\|_{W^{1,4}}<\infty. 
  $$
  We have that  for $c\neq d$ and $0<a<1$, there exists $C=C(c,d,a)$, so that the following estimate holds 
  \begin{equation}
  \label{318} 
  \int_0^\tau \f{e^{-c(\tau-s)} e^{-d s}}{\min(1, |\tau-s|)^a} ds\leq C e^{-\min(c,d) \tau}.
  \end{equation}
  Moreover, we need a version of the Gronwall's inequality as follows. 
  \begin{lem}
  	\label{gro}
  	Let $\si \geq \mu>0, \ka>0$ and $a\in [0,1)$. Let $A_1,A_2, A_3$ be three positive constants so that a function $I:[0, \infty)\to \rone_+$ satisfies  $I(\tau)\leq A_1 e^{-\ga \tau}$, for some real $\ga$  and 
  	\begin{equation}
  	\label{gron} 
  	I(\tau)\leq A_2 e^{-\mu \tau} + A_3 \int_0^\tau \f{e^{-\si(\tau-s)}}{\min(1, |\tau-s|)^a} e^{-\ka s} I(s) ds.
  	\end{equation}
  	Then, there exists $C=C(a, \si, \mu, \ka, \ga, A_1, A_2, A_3)$, so that 
  	$$
  	I(\tau)\leq C   e^{- \mu \tau}.
  	$$ 
  \end{lem}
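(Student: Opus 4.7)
The plan is to recast the inequality in terms of $J(\tau) := e^{\mu\tau} I(\tau)$, which reduces the desired conclusion to showing $J$ is uniformly bounded on $[0,\infty)$. Multiplying the hypothesis by $e^{\mu \tau}$ yields
\begin{equation*}
J(\tau) \leq A_2 + A_3 \int_0^\tau \frac{e^{-(\si-\mu)(\tau-s)}}{\min(1, |\tau-s|)^a}\, e^{-\ka s}\, J(s)\, ds,
\end{equation*}
where $e^{-(\si-\mu)(\tau-s)} \leq 1$ because $\si \geq \mu$; the a priori bound becomes $J(\tau) \leq A_1 e^{(\mu-\ga)\tau}$, which in particular is finite on every compact subset of $[0,\infty)$.

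The key observation is that the factor $e^{-\ka s}$ inside the kernel dominates the possible exponential growth of $J$ once we are far enough out in time. To exploit this, I would introduce $B(T) := \sup_{0 \leq \tau \leq T} J(\tau)$ (finite for every $T$, by the a priori bound) and, for a large parameter $M > 0$ to be chosen later, split the convolution at $s = M$. For $\tau \geq M+1$ we have $\min(1,|\tau-s|) = 1$ on $[0,M]$, so the contribution on $[0,M]$ is bounded by a constant $C_M = C_M(A_1,\mu,\ga,\ka,\si,M)$ coming from integrating $e^{(\mu-\ga-\ka)s}$ against $A_1$. On $[M,\tau]$, bound $J(s) \leq B(\tau)$ and further split $[M,\tau-1] \cup [\tau-1,\tau]$ to separate the weak singularity. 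A direct calculation using $e^{-(\si-\mu)(\tau-s)}\leq 1$ produces
\begin{equation*}
\int_M^\tau \frac{e^{-(\si-\mu)(\tau-s)}}{\min(1,|\tau-s|)^a}\, e^{-\ka s}\, ds \;\leq\; \frac{e^{-\ka M}}{\ka} + \frac{e^{\ka}}{1-a}\, e^{-\ka \tau}.
\end{equation*}

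Now fix $M$ so large that $A_3 e^{-\ka M}/\ka < 1/4$, and choose $T_0 \geq M+1$ so that the second term above contributes less than $1/(4A_3)$ for every $\tau \geq T_0$. This yields $J(\tau) \leq A_2 + A_3 C_M + \frac{1}{2} B(\tau)$ for all $\tau \geq T_0$. Combined with the crude bound $\sup_{0 \leq \tau \leq T_0} J(\tau) \leq A_1 e^{|\mu-\ga|T_0}$, a standard supremum argument over $[0,T]$ (splitting by whether the supremum is attained before or after $T_0$) gives $B(T) \leq \max\bigl\{ A_1 e^{|\mu-\ga|T_0},\, 2(A_2 + A_3 C_M)\bigr\}$ uniformly in $T$. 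This uniform bound on $J$ translates immediately to $I(\tau) \leq C e^{-\mu\tau}$ with $C$ depending only on the stated parameters. The main technical obstacle is bookkeeping around the weakly singular factor $\min(1,|\tau-s|)^{-a}$: the split at $s = M$ is precisely what disentangles its singularity near the diagonal from the $e^{-\ka s}$ decay away from it, and is what allows both contributions to be made small or absorbed into $\frac{1}{2}B(\tau)$.
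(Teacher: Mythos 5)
Your proof is correct, but it takes a genuinely different route from the paper's. The paper bootstraps the decay \emph{rate}: starting from the a priori bound $I(\tau)\le A_1 e^{-\ga\tau}$, it plugs this into \eqref{gron}, invokes the convolution estimate \eqref{318} to get $I(\tau)\le C_{1}e^{-\ga_1\tau}$ with $\ga_1=\min(\mu,\ka+\ga)$ (giving up an $\eps$ when the exponents coincide), and iterates; since each step improves the rate by at least $\min(\ka,\ka/2)>0$, the exponent reaches $\mu$ in finitely many steps. You instead normalize $J(\tau)=e^{\mu\tau}I(\tau)$ and run a one-shot absorption argument: split the kernel at $s=M$ and at $s=\tau-1$, use the crude a priori bound on $[0,M]$, and make the remaining operator norm less than $1/2$ by taking $M$ and $T_0$ large, so that $\sup J$ absorbs itself. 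I checked your key integral estimate $\int_M^\tau \min(1,|\tau-s|)^{-a}e^{-(\si-\mu)(\tau-s)}e^{-\ka s}\,ds\le e^{-\ka M}/\ka + e^{\ka}e^{-\ka\tau}/(1-a)$ and the final supremum dichotomy; both are sound, and all constants depend only on the permitted parameters. The trade-offs: the paper's iteration recycles an estimate it needs elsewhere (\eqref{318}) and is shorter to write, but is slightly delicate at the resonant step $\si=\ka+\ga_n$; your argument avoids iteration and any resonance issue, produces an explicit constant in a single pass, and makes transparent that only the integrability of $e^{-\ka s}$ on $[M,\infty)$ is being used. Both arguments use $\ka>0$ in an essential way.
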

  \begin{proof}
  	We present the short proof here for completeness.  Let $\ga_n$ be so that there exists $C_n$, so that $I(\tau)\leq C_n e^{-\ga_n \tau}$ for all $\tau>0$. We will show that there exists a constant $C_{n+1}$, so that $|I(\tau)|\leq C_{n+1} e^{-\ga_{n+1} \tau}$, for $\ga_{n+1}:=\min(\mu, \f{\ka}{2}+\ga_n)$.  
  	
  	 Indeed, taking absolute values in \eqref{gron} and  plugging in the assumed estimate $I(\tau)\leq C_n e^{-\ga_n \tau}$, we obtain 
  	$$
  	|I(\tau)|\leq A_2 e^{-\mu \tau}+ A_3 C_n  \int_0^\tau \f{e^{-\si(\tau-s)}e^{-\ka s} 
  		e^{-\ga_n s} }{\min(1, |\tau-s|)^a} ds.
  	$$
  	This is of course nothing but the setup in \eqref{318}, if $\si\neq \ka+\ga_n$.  We get the estimate 
  	$$
  	|I(\tau)|\leq A_2 e^{-\mu \tau}+ D_n e^{-\min(\si, \ka+\ga_n)\tau}\leq C_{n+1} 
  	e^{-\min(\mu, \ka+\ga_n)\tau}. 
  	$$
  	Even in the case $\si=\ka+\ga_n$, via an obvious modification of the argument above, we can give up slightly in the exponents and still obtain a bound of 
  	$C_{n+1} e^{-\min(\mu, \f{\ka}{2}+\ga_n)\tau}$. 
  	
  	Thus, we have shown the bound $|I(\tau)|\leq C_{n+1} e^{-\ga_{n+1} \tau}$. The rest is just an iteration  argument, starting with $\ga_0:=\ga$, which will certainly conclude,  after a finitely many steps, since $\ka>0$,  with $\ga_N=\mu$. 
  \end{proof}
   
  \section{Construction of the steady state} 
  \label{sec:3} 
  In this section, we provide a construction of the steady state $\tilde{\theta}$. In particular, and as a corollary of the results presented herein, we show Theorem \ref{theo:5} and Proposition \ref{t:10}. The  properties of $\tilde{\theta}$ 
   will depend on the properties of the forcing term $f$.  Before we continue with the specifics, let us recast the profile   problem \eqref{20} in the more convenient form
  \begin{equation}
  \label{22} 
  \tilde{\theta} + div \La^{-\al} ( \tilde{\theta}\cdot \ku_{\tilde{\theta} }) = \La^{-\al} f,
  \end{equation}
  which was obtained using the fact that $div(\ku_{\tilde{\theta}})=0$. 
  Note that \eqref{22} (and \eqref{20}) enjoy scaling invariance. That is, if $\tilde{\theta}$ is a solution, with right-hand side $f$, then so is $\tilde{\theta}_\la(x):= \la^{\al-1} \tilde{\theta}(\la x)$, with the corresponding 
  right-hand side $f_\la(x)=\la^{1-2\al} f(\la x)$. This forces certain critical spaces in the argument, such as $\tilde{\theta}\in L^{\f{2}{\al-1}}(\rtwo), \nabla\tilde{\theta}\in L^{\f{2}{\al}}(\rtwo)$ and also $f\in \dot{W}^{-\al, \f{2}{\al-1}}$ among others. 
  As we shall need to impose smallness assumptions for our existence results, it is  well-known that these are naturally introduced in a critical space, as these norms are intrinsic (i.e. remain unchanged)  under a scaling transformation. 
  \subsection{Existence and $L^p$ properties of $\tilde{\theta}$}
  \begin{proposition}
  	\label{prop:12} 
  	Let $f\in \dot{W}^{-\al, \f{2}{\al-1}}$. Then, there exists $\eps_0>0$, so that whenever $\|f\|_{\dot{W}^{-\al, \f{2}{\al-1}}}<\eps_0$, then the equation \eqref{22} has  solution  $\tilde{\theta}\in L^{\f{2}{\al-1}}(\rtwo)$. Moreover, for some absolute constant $C$
  	$$
  	\|\tilde{\theta}\|_{L^{\f{2}{\al-1}}}\leq C \|f\|_{\dot{W}^{-\al, \f{2}{\al-1}}}. 
  	$$
  	If  for some $p>\f{2}{3-\al}$, we assume in addition $f\in W^{-\al,p}(\rtwo)$, then   $\tilde{\theta}\in L^{p}(\rtwo)$ and  
  	$$
  	\|\tilde{\theta}\|_{L^p}\leq C \|f\|_{\dot{W}^{-\al, p}}. 
  	$$
  \end{proposition}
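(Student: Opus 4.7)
\smallskip

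\noindent\textbf{Proof proposal.} The plan is to recast \eqref{22} as a fixed-point equation $\tilde\theta = T(\tilde\theta)$, where
\[
T(\theta) := \La^{-\al} f - \mathrm{div}\, \La^{-\al}(\theta\, \ku_\theta),
\]
and to apply Banach's contraction mapping theorem in the scaling-critical space $L^{2/(\al-1)}(\rtwo)$. The operator $\mathrm{div}\,\La^{-\al}$ has order $-(\al-1)$, so it factors as a zero-order Riesz-type operator composed with fractional integration of order $\al-1$.

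The key bilinear ingredient is the estimate
\[
\|\mathrm{div}\,\La^{-\al}(\theta\, \ku_\theta)\|_{L^q} \leq C\, \|\theta\|_{L^{2/(\al-1)}}\,\|\theta\|_{L^q},
\]
valid for $2/(3-\al) < q < \infty$. Indeed, Riesz transforms are bounded on $L^q$ so $\ku_\theta \in L^q$, and H\"older's inequality yields $\theta\ku_\theta \in L^r$ with $\f{1}{r} = \f{\al-1}{2}+\f{1}{q}$; the constraint $q>2/(3-\al)$ is exactly what forces $r>1$, after which Hardy--Littlewood--Sobolev fractional integration of order $\al-1$ maps $L^r$ into $L^q$ because $\f{1}{r}-\f{1}{q}=\f{\al-1}{2}$.

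For the first claim, specialize the bilinear estimate to $q = 2/(\al-1)$ (which satisfies $2/(\al-1) > 2/(3-\al)$ since $\al<2$). Together with the tautology $\|\La^{-\al}f\|_{L^{2/(\al-1)}} = \|f\|_{\dot{W}^{-\al,2/(\al-1)}}$ we obtain
\[
\|T(\theta)\|_{L^{2/(\al-1)}} \leq \|f\|_{\dot{W}^{-\al,2/(\al-1)}} + C\,\|\theta\|_{L^{2/(\al-1)}}^{2},
\]
and, by polarization, $\|T(\theta)-T(\phi)\|_{L^{2/(\al-1)}} \leq C(\|\theta\|_{L^{2/(\al-1)}}+\|\phi\|_{L^{2/(\al-1)}})\|\theta-\phi\|_{L^{2/(\al-1)}}$. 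Thus, provided $\eps_0$ is chosen sufficiently small, $T$ is a contraction on the closed ball of radius $R = 2\|f\|_{\dot{W}^{-\al,2/(\al-1)}}$, giving a unique fixed point $\tilde\theta$ with $\|\tilde\theta\|_{L^{2/(\al-1)}}\leq C\|f\|_{\dot{W}^{-\al,2/(\al-1)}}$.

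For the second claim, run the same contraction in $L^{2/(\al-1)} \cap L^p$ with the natural intersection norm, now employing the bilinear estimate with $q=p>2/(3-\al)$. The smallness of the \emph{critical} norm controls the nonlinear term in $L^p$ as well, yielding a contraction on a small ball in the intersection space and hence a fixed point lying in $L^{2/(\al-1)}\cap L^p$. Uniqueness in $L^{2/(\al-1)}$ identifies this fixed point with $\tilde\theta$, and substituting back into $\tilde\theta = T(\tilde\theta)$ gives
\[
\|\tilde\theta\|_{L^p} \leq \|f\|_{\dot{W}^{-\al,p}} + C\,\|\tilde\theta\|_{L^{2/(\al-1)}}\,\|\tilde\theta\|_{L^p} \leq \|f\|_{\dot{W}^{-\al,p}} + C\eps_0\,\|\tilde\theta\|_{L^p},
\]
so that the small prefactor can be absorbed to yield $\|\tilde\theta\|_{L^p} \leq 2\|f\|_{\dot{W}^{-\al,p}}$. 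The main technical point is to align the H\"older and Hardy--Littlewood--Sobolev exponents consistently with the scaling of \eqref{22} and to see the endpoint condition $p > 2/(3-\al)$ arise precisely from the threshold $r > 1$ required for fractional integration; everything else is a routine contraction argument.
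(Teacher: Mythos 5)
Your proposal is correct and follows essentially the same route as the paper: both arguments rest on the bilinear bound $\|\mathrm{div}\,\La^{-\al}(z\,\ku_h)\|_{L^q}\leq C\|h\|_{L^{\f{2}{\al-1}}}\|z\|_{L^q}$ obtained from Riesz-transform boundedness, H\"older, and the Sobolev/Hardy--Littlewood--Sobolev mapping of the order-$(1-\al)$ operator $\mathrm{div}\,\La^{-\al}$, with the threshold $p>\f{2}{3-\al}$ arising in both from the requirement $r>1$, followed by a small-data fixed-point argument in the critical space $L^{\f{2}{\al-1}}$ and an absorption step for the $L^p$ bound. The only cosmetic difference is that you run a direct Picard/contraction iteration for $T$, whereas the paper iterates by inverting $Id+\ck_{\tilde{\theta}_{n-1}}$ via a Neumann series at each stage.
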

  {\bf Remark:} We can state uniqueness results for small solutions 
  $\tilde{\theta}$ as stated above. Our dynamic results later on however, 
   provide much stronger uniqueness statements. 
  \begin{proof}
 Introduce  the operators $\ck_{h}[z]:=div \La^{-\al} ( z\cdot \ku_h)$. We will show that they map $L^{\f{2}{\al-1}}(\rtwo)$ into itself, with a norm bounded by a multiple of  $\|u\|_{L^{\f{2}{\al-1}}(\rtwo)}$. Indeed, by \eqref{m}, we have 
  	\begin{equation}
  	\label{1214} 
  		\|\ck_{h}[z]\|_{L^{\f{2}{\al-1}}(\rtwo)}\leq C \|\ku_h z\|_{L^{\f{1}{\al-1}}}\leq C 
  		\|\ku_h\|_{L^{\f{2}{\al-1}}} \|z\|_{L^{\f{2}{\al-1}}} \leq C 	\|h\|_{L^{\f{2}{\al-1}}} \|z\|_{L^{\f{2}{\al-1}}} .
  	\end{equation}
  	Thus, if $\|h\|_{L^{\f{2}{\al-1}}}<<1$, the operator $Id+\ck_h$ is invertible, via von Neumann series, with a norm $\|(Id+\ck_h)^{-1}\|_{B(L^{\f{2}{\al-1}})}\leq \f{1}{2}$. 
  	
  	With these preliminary considerations in mind, note that \eqref{22} is nothing but the functional equation 
  	$ (Id+\ck_{\tilde{\theta}})\theta = \La^{-\al} f$. 
  	Thus, we set up the iteration scheme $\tilde{\theta}_0:=\La^{-\al} f$ and for each $n\geq 1$, 
  	$
  	\tilde{\theta}_n:=(Id+\ck_{\tilde{\theta}_{n-1}}) \La^{-\al} f.
  	$
  	Clearly, this is possible, if we manage to maintain the smallness of $\|\tilde{\theta}_{n}\|_{L^{\f{2}{\al-1}}}$. This is clearly the case for  $\tilde{\theta}_0$ by assumption, since $\|\tilde{\theta}_0\|_{L^{\f{2}{\al-1}}}=\|f\|_{\dot{W}^{-\al, \f{2}{\al-1}}}<\eps_0$. 
  	 For each $n\geq 2$,  we subtract two consecutive equations to obtain\footnote{Here, we denote for conciseness $\ck_n=\ck_{\tilde{\theta}_{n}}$} 
  	 $$
  	 \tilde{\theta}_n - \tilde{\theta}_{n-1}=(\ck_{n-2}-\ck_{n-1})\tilde{\theta}_{n-1} +
  	 \ck_{n-1} (\tilde{\theta}_{n-1}-\tilde{\theta}_{n}).
  	 $$
  	Estimating as in \eqref{1214}, we obtain 
  	$$
  	\|\tilde{\theta}_n - \tilde{\theta}_{n-1}\|_{L^{\f{2}{\al-1}}}\leq C 	
  	 \|\tilde{\theta}_{n-1}\|_{L^{\f{2}{\al-1}}}
  	(\|\tilde{\theta}_{n-1} - \tilde{\theta}_{n-2}\|_{L^{\f{2}{\al-1}}}+\|\tilde{\theta}_{n} - \tilde{\theta}_{n-1}\|_{L^{\f{2}{\al-1}}}). 
  	$$
  For small enough $\eps_0$, by using an  induction arguments, we establish that $\|\tilde{\theta}_n\|<2\eps_0$ and 	 
  $$
  \|\tilde{\theta}_n - \tilde{\theta}_{n-1}\|_{L^{\f{2}{\al-1}}}\leq C\eps_0 \|\tilde{\theta}_{n-1}- \tilde{\theta}_{n-2}\|_{L^{\f{2}{\al-1}}}.
  $$
  
  This   implies that $\{\tilde{\theta}_n\}$ is a Cauchy sequence in the critical space $L^{\f{2}{\al-1}}$, which 
  means  that $\tilde{\theta}=\lim_n \tilde{\theta}_n\in L^{\f{2}{\al-1}}$ exists and it is  small. Finally, taking a limit in the $L^{\f{2}{\al-1}}$ norm  in the equation $(Id+\ck_{n-1})\tilde{\theta}_{n}=\La^{-\al} f$ implies that $(Id+\ck_{\tilde{\theta}})\tilde{\theta}=\La^{-\al} f$, which was the claim. 
  
  Now, if we assume in addition that $f\in W^{-\al, p}, p>\f{2}{3-\al}$, we obtain  
  	$$
  	\|\ck_{\tilde{\theta}}[z]\|_{L^p(\rtwo)}\leq C \|\ku_{\tilde{\theta}} z\|_{L^q}\leq C 
  	\|\ku_{\tilde{\theta}}\|_{L^{\f{2}{\al-1}}} \|z\|_{L^p} \leq C 	\|\tilde{\theta}\|_{L^{\f{2}{\al-1}}} \|z\|_{L^p},
  	$$
  	where $2\left(\f{1}{q}-\f{1}{p}\right)=\al-1$. The constraint $p>\f{2}{3-\al}$ is needed to ensure that in the above arguments $q>1$. 
  	
  \end{proof}
  Our next proposition concerns Sobolev space estimates for the steady state solution $\tilde{\theta}$  produced in Proposition \ref{prop:12}. 
  \subsection{Estimates in Sobolev spaces for $\tilde{\theta}$}
  \begin{proposition}
  	\label{prop:15} 
  		Let $f\in \dot{W}^{-\al, \f{2}{\al-1}}: \|f\|_{\dot{W}^{-\al, \f{2}{\al-1}}}<\eps_0$,  as in Proposition \ref{prop:12}. Then, there exists an absolute constant $C$, so that for each $p>1$,  the solution $\tilde{\theta}$ satisfies, 
  		\begin{equation}
  		\label{600}
  		\|\nabla \tilde{\theta}\|_{L^p(\rtwo)}\leq C \|f\|_{\dot{W}^{1-\al,p}(\rtwo)}, 
  		\end{equation}	
  		provided $f\in W^{1-\al,p}(\rtwo)$. 
  \end{proposition}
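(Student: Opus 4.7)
The plan is to differentiate the integral form \eqref{22} of the profile equation and close the estimate by absorbing the resulting nonlinear contribution, exploiting the smallness of $\tilde{\theta}$ in the critical space $L^{\f{2}{\al-1}}$ already supplied by Proposition \ref{prop:12}. Writing \eqref{22} as $\tilde{\theta} = \La^{-\al} f - \La^{-\al}(\ku_{\tilde{\theta}} \cdot \nabla \tilde{\theta})$ (recall $div(\ku_{\tilde{\theta}})=0$) and applying $\nabla$ gives
\begin{equation*}
\nabla \tilde{\theta} \;=\; \nabla \La^{-\al} f \;-\; \nabla \La^{-\al}\bigl(\ku_{\tilde{\theta}} \cdot \nabla \tilde{\theta}\bigr).
\end{equation*}

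For the linear term, the factorization $\nabla \La^{-\al} = (\nabla \La^{-1}) \circ \La^{1-\al}$ displays the right-hand side as the Riesz transform of $\La^{1-\al} f$; since $\nabla \La^{-1}$ is bounded on $L^p$ for $1<p<\infty$, one obtains $\|\nabla \La^{-\al} f\|_{L^p} \leq C\|\La^{1-\al} f\|_{L^p} = C\|f\|_{\dot{W}^{1-\al,p}}$, which is exactly the right-hand side of \eqref{600}.

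The nonlinear term is the main piece. The same factorization reduces the task to estimating $\|\La^{1-\al}(\ku_{\tilde{\theta}} \cdot \nabla \tilde{\theta})\|_{L^p}$. Since $1-\al<0$, the multiplier inequality \eqref{m} provides $\La^{1-\al}: L^q \to L^p$ whenever $\f{2}{q} - \f{2}{p} = \al-1$ with $q>1$ (i.e.\ $p>\f{2}{3-\al}$, the same scaling constraint governing Proposition \ref{prop:12}). Hölder's inequality at the critical scaling exponent $\f{2}{\al-1}$ then yields
\begin{equation*}
\|\ku_{\tilde{\theta}} \cdot \nabla \tilde{\theta}\|_{L^q} \;\leq\; \|\ku_{\tilde{\theta}}\|_{L^{2/(\al-1)}} \|\nabla \tilde{\theta}\|_{L^p} \;\leq\; C\|\tilde{\theta}\|_{L^{2/(\al-1)}} \|\nabla \tilde{\theta}\|_{L^p} \;\leq\; C\eps_0 \|\nabla \tilde{\theta}\|_{L^p},
\end{equation*}
where the middle inequality uses the boundedness of the Riesz transforms on $L^{2/(\al-1)}$ and the last step invokes Proposition \ref{prop:12}. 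Assembling the two bounds and taking $\eps_0$ small enough that $C\eps_0<\f{1}{2}$ produces \eqref{600} by absorbing the nonlinear piece into the left-hand side.

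The principal subtlety is that the absorption step is only legitimate once $\|\nabla \tilde{\theta}\|_{L^p}$ is known a priori to be finite. The cleanest remedy is to rerun the contractive iteration $\tilde{\theta}_n = \La^{-\al} f - \La^{-\al}(\ku_{\tilde{\theta}_{n-1}} \cdot \nabla \tilde{\theta}_{n-1})$ from the proof of Proposition \ref{prop:12}, tracking $\|\nabla \tilde{\theta}_n\|_{L^p}$ in parallel with $\|\tilde{\theta}_n\|_{L^{2/(\al-1)}}$. The initial iterate satisfies $\nabla \La^{-\al} f \in L^p$ by the hypothesis $f \in \dot{W}^{1-\al,p}$, and the same Hölder--HLS estimate applied to the differences $\nabla(\tilde{\theta}_n - \tilde{\theta}_{n-1})$ shows the sequence is Cauchy in $L^p$. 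Passing to the limit then yields the stated bound for the fixed point $\tilde{\theta}$.
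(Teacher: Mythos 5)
Your argument for the range $p>\f{2}{3-\al}$ matches the paper's first step exactly (the paper phrases it as inverting $Id+\tilde{\ck}_{\tilde{\theta}}$ with $\tilde{\ck}_{h}[z]=\nabla\La^{-\al}(z\cdot\ku_h)$ by Neumann series, which is the same absorption you perform, and your iteration remedy for the a priori finiteness is the same device used in Proposition \ref{prop:12}). But there is a genuine gap: the proposition asserts \eqref{600} for \emph{every} $p>1$, and you yourself record that your Hölder--Sobolev scheme forces $q>1$, i.e.\ $p>\f{2}{3-\al}$. Since $\al\in(1,2)$ gives $\f{2}{3-\al}\in(1,2)$, the range $1<p\leq\f{2}{3-\al}$ is left unproved, and it cannot be reached by your exponent arrangement: the only norm of $\tilde{\theta}$ available with smallness is the critical $L^{\f{2}{\al-1}}$ norm, and pairing $\|\ku_{\tilde{\theta}}\|_{L^{2/(\al-1)}}$ with $\|\nabla\tilde{\theta}\|_{L^p}$ produces a product in $L^q$ with $q\leq 1$ once $p\leq\f{2}{3-\al}$, where the smoothing bound for $\La^{1-\al}$ fails.

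The paper closes this range by a second, different redistribution of the derivatives: it writes the nonlinear term as $\nabla^2\La^{-\al}(\tilde{\theta}\,\ku_{\tilde{\theta}})$, a pseudo-differential operator of order $2-\al$ acting on the \emph{undifferentiated} product, and applies the Kato--Ponce (fractional Leibniz) inequality to get
\begin{equation*}
\|\La^{2-\al}(\tilde{\theta}\,\ku_{\tilde{\theta}})\|_{L^p}\leq C\,\|\La^{2-\al}\tilde{\theta}\|_{L^r}\,\|\tilde{\theta}\|_{L^{\f{2}{\al-1}}},\qquad \f{1}{p}=\f{1}{r}+\f{\al-1}{2},
\end{equation*}
followed by the Sobolev embedding $\|\La^{2-\al}\tilde{\theta}\|_{L^r}\leq C\|\La\tilde{\theta}\|_{L^p}$, after which the small factor $\|\tilde{\theta}\|_{L^{\f{2}{\al-1}}}$ again allows absorption of $\|\La\tilde{\theta}\|_{L^p}\sim\|\nabla\tilde{\theta}\|_{L^p}$ into the left-hand side; this works for all $1<p<\f{2}{\al-1}$, which covers the missing interval. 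To complete your proof you would need to add this second step (or an equivalent way of placing the full $2-\al$ orders of smoothing onto the $\tilde{\theta}$ factor rather than onto the product $\ku_{\tilde{\theta}}\cdot\nabla\tilde{\theta}$).
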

  \begin{proof}
  We set up the equation for $\nabla \tilde{\theta}$ in the form $(Id+\tilde{\ck}_{\tilde{\theta}})\nabla \theta =\nabla  \La^{-\al} f$, where the operator  $\tilde{\ck}_{h}[z]:=\nabla  \La^{-\al} ( z\cdot \ku_h)$. The operator $\tilde{\ck}$ satisfies the same bound as $\ck_{\tilde{\theta}}$ 
  $$
  \| \tilde{\ck}_{\tilde{\theta}}\|_{L^p\to L^p}\leq C \|\tilde{\theta}\|_{L^{\f{2}{\al-1}}}.
  $$
  for all $p>\f{2}{3-\al}$. Hence, the bound \eqref{600}. 
  
  Obtaining further bounds, such as  $\|\nabla \tilde{\theta}\|_{L^p(\rtwo)}$ for $1<p<\f{2}{3-\al}$ requires bootstrapping the estimates obtained for 
  $\tilde{\theta}, \nabla \tilde{\theta}$.  To this end,  we can write the equation for 
  $\nabla \tilde{\theta}$, in the form 
  \begin{equation}
  \label{700}
   \nabla \tilde{\theta} + \nabla^2 \La^{-\al} (\tilde{\theta}\cdot \ku_{\tilde{\theta}})= \nabla  \La^{-\al} f. 
  \end{equation}
  Take $L^p, 1<p<\f{2}{\al-1}$ norms in \eqref{700}. Note that since $\al<2$, we have that $\f{2}{\al-1}>\f{2}{3-\al}$ and hence this covers larger region that the needed one $1<p<\f{2}{3-\al}$.  Applying the Kato-Ponce bounds (note that $\nabla^2 \La^{-\al}$ is a pseudo-differential operator of order $2-\al$), and the Sobolev's inequality, with   $\f{1}{p}=\f{1}{r}+\f{\al-1}{2}$, 
  $$
  \|\La \tilde{\theta}\|_{L^p}\|\sim \|\nabla \tilde{\theta}\|_{L^p}\leq C \|f\|_{\dot{W}^{1-\al,p}}+ 
  C\|\La^{2-\al} \tilde{\theta}\|_{L^r} \|\tilde{\theta}\|_{L^{\f{2}{\al-1}}}\leq C \|f\|_{\dot{W}^{1-\al,p}}+ 
  C\|\La \tilde{\theta}\|_{L^p}  \|\tilde{\theta}\|_{L^{\f{2}{\al-1}}}.
  $$
 Again, the smallness obtained in Proposition \ref{prop:12},  $\|\tilde{\theta}\|_{L^{\f{2}{\al-1}}}<<1$, will allow us to hide $\|\La \tilde{\theta}\|_{L^p} $ on the left-hand side and we can obtain the bound $\|\La \tilde{\theta}\|_{L^p}\leq C \|f\|_{\dot{W}^{1-\al,p}}$. 
  \end{proof}

  \subsection{Weighted estimates for $\tilde{\theta}$}
 \begin{proposition}
 	\label{prop:20} 
 	Let $f\in \dot{W}^{-\al, \f{2}{\al-1}}: \|f\|_{\dot{W}^{-\al, \f{2}{\al-1}}}<\eps_0$,  as in Proposition \ref{prop:12}.  Let $0<\de<2-\al$ and $m=3-\al-\de$. Assume in addition $f\in W^{1-\al,2}\cap W^{-\al,2}(\rtwo)$ and $\La^{1-\al} f\in L^2(m)$. 
 	Then,  
 	\begin{equation}
 	\label{t:20} 
 		\|\nabla \theta\|_{L^2(m)}\leq  C_\de( \|\La^{1-\al} f\|_{L^2(m)}+ 
 		\|f\|_{W^{1-\al,2}}+\|f\|_{W^{-\al,2}}^2).
 	\end{equation}
 
 \end{proposition}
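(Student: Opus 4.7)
My plan is a weighted $L^2(m)$ energy estimate on the profile equation \eqref{20}. The right test object is $(1+|x|^2)^m\,\La^{2-\al}\tilde\theta$: since $\La^\al\cdot\La^{2-\al}=-\De$, the principal pairing $\int \La^\al\tilde\theta\cdot(1+|x|^2)^m\La^{2-\al}\tilde\theta\,dx$ will, after sliding the fractional factors onto the weighted side, reproduce $\|\nabla\tilde\theta\|_{L^2(m)}^2$ modulo commutators and integration-by-parts remainders. To extract this main term I would factor $\La^\al=\La\cdot\La^{\al-1}$ with $\al-1\in(0,1)$, use self-adjointness to transport $\La^{\al-1}$ across, and then decompose
\[
\La^{\al-1}\bigl[(1+|x|^2)^m\La^{2-\al}\tilde\theta\bigr]=(1+|x|^2)^m\La\tilde\theta+\bigl[\La^{\al-1},(1+|x|^2)^m\bigr]\La^{2-\al}\tilde\theta,
\]
with Lemma \ref{L_-50} applied to the commutator in the admissible range $s=\al-1\in(0,1)$, $\si=m>1$. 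The commutator is then dominated by a strictly lower-weight $L^2$ norm of $\La^{2-\al}\tilde\theta$, which, after interpolation and a second integration by parts, is absorbable into $\|\nabla\tilde\theta\|_{L^2(m)}^2$. Boundedness of Riesz transforms on $L^2(m)$ in this range of $m$ identifies $\|\La\tilde\theta\|_{L^2(m)}\sim\|\nabla\tilde\theta\|_{L^2(m)}$.

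On the right-hand side, I would transport $\La^{1-\al}$ off the test factor and onto $f$, with another application of Lemma \ref{L_-50} to the resulting commutator, to produce
\[
\Bigl|\int f\cdot(1+|x|^2)^m\La^{2-\al}\tilde\theta\,dx\Bigr|\leq \|\La^{1-\al}f\|_{L^2(m)}\|\La\tilde\theta\|_{L^2(m)}+\textup{(comm. remainder)}.
\]
The main summand contributes the leading $\|\La^{1-\al}f\|_{L^2(m)}$ term in \eqref{t:20} once half of $\|\nabla\tilde\theta\|_{L^2(m)}^2$ is absorbed on the left, while the unweighted commutator remainder is bounded by $\|f\|_{W^{1-\al,2}}$, producing the second summand of \eqref{t:20}.

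For the nonlinear term, I would use the divergence-free property $\ku_{\tilde\theta}\cdot\nabla\tilde\theta=\nabla\cdot(\tilde\theta\,\ku_{\tilde\theta})$ and integrate by parts, distributing one derivative between the weight $(1+|x|^2)^m$ and the factor $\La^{2-\al}\tilde\theta$. The resulting trilinear form is then estimated by H\"older in the natural critical exponents: Proposition \ref{prop:12} furnishes $\|\tilde\theta\|_{L^{2/(\al-1)}}\leq C\eps_0$, Proposition \ref{prop:15} gives $\|\nabla\tilde\theta\|_{L^{2/\al}}\leq C\eps_0$, and the remaining $L^2$ factor is paired with $\|\tilde\theta\|_{L^2}\leq C\|f\|_{W^{-\al,2}}$ obtained from Proposition \ref{prop:15} at $p=2$; the square of this last bound is precisely the source of the $\|f\|_{W^{-\al,2}}^2$ term in \eqref{t:20}. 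Any residual copy of $\|\nabla\tilde\theta\|_{L^2(m)}$ appearing on the right is then absorbed on the left using the smallness of $\|\tilde\theta\|_{L^{2/(\al-1)}}<\eps_0$.

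The main obstacle is the careful tracking of every commutator between $\La^s$ and the polynomial weight: each such commutator must be forced into the admissible range $s\in(0,1)$, $\si>1$ of Lemma \ref{L_-50}, and its remainder must be absorbable. The upper bound $m<3-\al$ in the hypothesis enters exactly as the threshold at which the weight loss from these commutators (of order $\al-1$ per application) just barely matches the gain available through integration by parts and interpolation with the $L^{2/(\al-1)}$ bound of Proposition \ref{prop:12}; for $m\geq 3-\al$ the remainders would exceed $\|\nabla\tilde\theta\|_{L^2(m)}$ and the scheme would fail to close.
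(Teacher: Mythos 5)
Your strategy (testing \eqref{20} against $(1+|x|^2)^m\La^{2-\al}\tilde\theta$ and controlling commutators via Lemma \ref{L_-50}) is genuinely different from the paper's, which instead works with the integrated form \eqref{22}, localizes $\nabla\tilde\theta$ on dyadic annuli $|x|\sim 2^k$, and sums $2^{2km}\|\nabla\tilde\theta_k\|_{L^2}^2$. But as written your scheme has two gaps I do not see how to close with the tools available. First, the nonlinear term: after writing $\ku_{\tilde\theta}\cdot\nabla\tilde\theta=\nabla\cdot(\tilde\theta\,\ku_{\tilde\theta})$ and integrating by parts against your test function, the derivative that lands on the factor $\La^{2-\al}\tilde\theta$ produces $\int \tilde\theta\,\ku_{\tilde\theta}\cdot(1+|x|^2)^m\nabla\La^{2-\al}\tilde\theta$, i.e.\ a weighted pairing with a derivative of $\tilde\theta$ of order $3-\al>1$. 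This is strictly above the order of the quantity you are estimating and is not controlled by $\|\tilde\theta\|_{L^{2/(\al-1)}}$, $\|\nabla\tilde\theta\|_{L^{2/\al}}$, $\|\tilde\theta\|_{L^2}$ or $\|\nabla\tilde\theta\|_{L^2(m)}$; pushing $\La^{2-\al}$ back onto the product would require a weighted fractional Leibniz rule and weighted Sobolev embeddings that neither you nor the paper establish. Your claim that the remaining factor "is paired with $\|\tilde\theta\|_{L^2}$" silently discards this term. Second, the commutator remainders: Lemma \ref{L_-50} applied with $\si=2m$, $s=\al-1$ bounds $[\La^{\al-1},|x|^{2m}]\La^{2-\al}\tilde\theta$ in $L^2$ by $\||x|^{2m-\al+1}\La^{2-\al}\tilde\theta\|_{L^2}$, a weighted norm of a \emph{fractional} derivative of order $2-\al$ whose weight exponent ($2m-\al+1$, or $m-\al+1$ after splitting the weight symmetrically) does not reduce to $\|\nabla\tilde\theta\|_{L^2(m)}$ without a further weighted commutator/interpolation estimate for the fractional integral $\La^{-(\al-1)}$. "After interpolation and a second integration by parts, is absorbable" is precisely the step that needs proof.

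Your diagnosis of the threshold $m<3-\al$ is also not the actual mechanism. In the paper's argument the constraint has nothing to do with weight loss in commutators: it comes from the far-field decay of the kernel of $\nabla^2\La^{-\al}$ (order $2-\al$, kernel $\sim|x-y|^{-(4-\al)}$). On the annulus $|x|\sim 2^k$ the contribution of the quadratic term supported away from that annulus is pointwise $\lesssim 2^{-k(4-\al)}\|\tilde\theta\|_{L^2}^2$ by Cauchy--Schwarz, hence $\lesssim 2^{-k(3-\al)}\|\tilde\theta\|_{L^2}^2$ in $L^2$ of the annulus; multiplying by $2^{km}$ and squaring, the resulting series $\sum_k 2^{-2k(3-\al-m)}\|\tilde\theta\|_{L^2}^4$ converges exactly when $m<3-\al$, and this is also the source of the $\|f\|_{W^{-\al,2}}^2$ term in \eqref{t:20}. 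The near-diagonal piece is handled by Sobolev embedding and absorbed using $\|\tilde\theta\|_{L^{2/(\al-1)}}<\eps_0$, which is the one ingredient your proposal and the paper genuinely share.
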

  \begin{proof}
  Since we need to control $\|\nabla \tilde{\theta}\|_{L^2(m)}$, we invoke Proposition \ref{prop:15} that yields control of $\|\nabla \tilde{\theta}\|_{L^2}$. It remains to control $\||x|^m \nabla \tilde{\theta}\|_{L^2(|x|>1)}$. To that end, 	
  introduce a partition of unity \\ $\sum_{k=-\infty}^\infty \chi(2^{-k} x) =1$, based on a function $\chi\in C^\infty_0$, so that $supp\chi\subset \{x: \f{1}{2}<|x|<2\}$. 
  For any function $g$, introduce the notation $g_k(x):=g(x)\chi(2^{-k}x)$. 
  
 For the rest of the argument, our goal is to  control 
  $$
  \||x|^m \nabla \tilde{\theta}\|_{L^2(|x|>1)}^2\sim \sum_{k=0}^\infty 2^{2km}  \|\nabla \tilde{\theta}_k\|_{L^2}^2.
  $$
  Multiplying   \eqref{22}  with $\chi(2^{-k}x)$ and taking $\nabla$ yields 
  $$
  \nabla \tilde{\theta}_k+ [\nabla \La^{-\al}(\ku_{\tilde{\theta}}\cdot \nabla \tilde{\theta})]_k=F_k,
  $$
  where $F_k=\nabla (\La^{-\al} f)_k=(\nabla \La^{-\al} f)_k- 2^{-k} \nabla \chi(2^{-k} \cdot) \La^{-\al} f $.

  Taking $L^2$ norms yields the relation 
  \begin{equation}
  \label{720}
  \|\nabla \tilde{\theta}_k\|_{L^2}\leq   \|F_k\|_{L^2} +   
  \|[\nabla \La^{-\al}( \ku_{\tilde{\theta}}\cdot \nabla \tilde{\theta})]_k\|_{L^2}.
  \end{equation}
  	We now estimate the non-linear term. We have that for each function $G$, there is the point-wise bound 
  	$|\nabla \La^{-\al}[G]|\leq C \La^{1-\al} |G|$. Thus, with the notation $g_{\sim k}=g_{k-2}+\ldots+g_{k+2}$, 
  	\begin{eqnarray*}
  	\|[\nabla \La^{-\al}(\ku_{\tilde{\theta}}\cdot \nabla \tilde{\theta})]_k\|_{L^2} &\leq &  \|\La^{1-\al}(\ku_{\tilde{\theta}}\cdot \nabla \tilde{\theta}_{\sim k})\|_{L^2}+ \|[\nabla \La^{-\al}(\ku_{\tilde{\theta}}\cdot \nabla \tilde{\theta}_{<k-2})]_k\|_{L^2}+\\
  	&+&  \|[\nabla \La^{-\al}(\ku_{\tilde{\theta}}\cdot \nabla \tilde{\theta}_{>k-2})]_k\|_{L^2}.
  	\end{eqnarray*}
  	For the first term, by Sobolev embedding 
  	$$
  	 \|\La^{1-\al}(\ku_{\tilde{\theta}}\cdot \nabla \tilde{\theta}_{\sim k})\|_{L^2}\leq 
  	 C \|\ku_{\tilde{\theta}}\cdot \nabla \tilde{\theta}_{\sim k}\|_{L^{\f{2}{\al}}}\leq 
  	 C \|\nabla \tilde{\theta}_{\sim k}\|_{L^2} \|\tilde{\theta}\|_{L^{\f{2}{\al-1}}}\leq 
  	 C\eps_0 \|\nabla \tilde{\theta}_{\sim k}\|_{L^2}
  	$$
For the second term and the third term, we have the point-wise estimates (recall $|x|\sim 2^k$)
\begin{eqnarray*}
|\nabla \La^{-\al}(\ku_{\tilde{\theta}}\cdot \nabla \tilde{\theta}_{< k-2})(x)| &=& 
|\nabla^2 \La^{-\al}(\ku_{\tilde{\theta}}\cdot  \tilde{\theta}_{<k-2})(x)|\leq C \int \f{1}{|x-y|^{4-\al}} |\ku_{\tilde{\theta}}(y)| |\tilde{\theta}_{<k-2}(y)| dy\leq \\
&\leq & C 2^{-k(4-\al)} \|\tilde{\theta}\|_{L^2}^2 \\
|\nabla \La^{-\al}(\ku_{\tilde{\theta}}\cdot \nabla \tilde{\theta}_{>k-2})(x)| & \leq & C 2^{-k(4-\al)} \|\tilde{\theta}\|_{L^2}^2. 
\end{eqnarray*}
Thus, 
$$
\|[\La^{1-\al}(\ku_{\tilde{\theta}}\cdot \nabla \tilde{\theta}_{< k-2})]_k\|_{L^2}+ \|[\nabla \La^{-\al}(\ku_{\tilde{\theta}}\cdot \nabla \tilde{\theta}_{<k-2})]_k\|_{L^2}\leq C 2^{-k(3-\al)} \|\tilde{\theta}\|_{L^2}^2. 
$$
  Putting everything together in \eqref{720} that 
  \begin{equation}
  \label{324} 
  \|\nabla \tilde{\theta}_k\|_{L^2}\leq   \|F_k\|_{L^2} + C 2^{-k(3-\al)} \|\tilde{\theta}\|_{L^2}^2+  C \eps_0 \|\nabla \tilde{\theta}_{\sim k}\|_{L^2}. 
  \end{equation}	
  	for all $k\geq 1$. Squaring   \eqref{324}, multiplying by $2^{2km}=2^{2k(3-\al-\de)}$ and summing in $k\geq 1$  yields the {\it a posteriori} estimate 
  	$$
  	J:=\sum_{k=1}^\infty 2^{2(3-\al-\de)k} \|\nabla \tilde{\theta}_k\|_{L^2}^2 \leq 
  \sum_{k=1}^\infty 2^{2k(3-\al-\de)} \|F_k\|_{L^2}^2 	+  \sum_{k=1}^\infty 2^{-2\de k} \|\tilde{\theta}\|_{L^2}^4+ C\eps_0 (J+\|\nabla \tilde{\theta}\|_{L^2}^2).
  	$$
  	This yields the bound, for sufficiently small $\eps_0$, 
  	$$
  	J\leq C_\de(\|\La^{1-\al} f\|_{L^2(m)}^2+ 
  	\|f\|_{W^{1-\al,2}}^2
   +\|f\|_{W^{1-\al,2}}^2+\|\tilde{\theta}\|_{L^2}^4).
  	$$
  	Thus, for $m=3-\al-\de$ and any $\de>0$, by using the bounds \eqref{600} for $p=2$, 
  	$$
  	\|\nabla \tilde{\theta}\|_{L^2(m)}\leq C_\de( \|\La^{1-\al} f\|_{L^2(m)}+ 
  	\|f\|_{W^{1-\al,2}}+\|f\|_{W^{-\al,2}}^2).
  	$$
  \end{proof}
  \section{The scaled  variables and the associated operator $\cl$} 
  \label{sec:4} 
  
  Now that we have constructed the steady state $\tilde{\theta}$ we turn our attention to the analysis of the dynamic equations. As a first step, we shall need to introduce a major technical tool of our analysis, the  scaled variables. As we have alluded to above, the scaled variable approach is  very beneficial in this context. It was pioneered in \cite{GW, GW1} for the vorticity formulation of the 2D Navier-Stokes problem  and later, it was extended in our previous work \cite{HS} to the fractional case, to establish the exact relaxation rates for very general SQG type problems. 
  
 \subsection{Scaled variables} 
 \label{sec:4.1} 
  Following \cite{HS},  we introduce the scaled variables 
  $$
  \tau= \ln(1+ t), \ \ \eta = \frac{x}{(1+ t)^{\f{1}{\al}}}.
  $$
   In the context of these new variables,  we introduce  new independent functions, 
  \begin{equation}
  \label{24} 
  v(t, x)= \frac{1}{{(1+ t)^{1- \f{1}{\al}}}} V\left(\frac{x}{(1+ t)^{\f{1}{\al}}}, \ln(1+ t)\right), \ \ 
  \widetilde{\theta}(x)= \frac{1}{{(1+ t)^{1- \f{1}{\al}}}} \Theta\left( \frac{x}{(1+ t)^{\f{1}{\al}}}\right).
  \end{equation}
  or equivalently 
  \begin{equation}
  \label{26} 
  V(\tau, \eta)=e^{\tau\left(1- \f{1}{\al}\right)} v(e^{\f{\tau}{\al}} \eta, e^{\tau}-1),  \ \ 
  \Theta(\tau, \eta)=
  e^{\tau\left(1- \f{1}{\al}\right)} \tilde{\theta}(e^{\f{\tau}{\al}} \eta). 
  \end{equation}
  We compute 
  \begin{eqnarray} \nonumber
  v_t&=& - \frac{1- \f{1}{\al}}{{(1+ t)^{2- \f{1}{\al}}}} V- \f{1}{\al} \frac{1}{{(1+ t)^{2- \f{1}{\al}}}} \f{x}{(1+ t)^{\f{1}{\al}}} \cdot  \nabla_{\eta} V+ \frac{1}{{(1+ t)^{2- \f{1}{\al}}}} V_{\tau},\\ \nonumber
  \La^{\al} v&=& \frac{1}{{(1+ t)^{2- \f{1}{\al}}}} \La^{\al} V, \hspace{1em} \La^{\al} \widetilde{\theta}= \frac{1}{{(1+ t)^{2- \f{1}{\al}}}} \La^{\al} \Theta \\ \nonumber
  \ku_v \cdot \nabla v&=& \frac{1}{{(1+ t)^{2- \f{1}{\al}}}} \ku_V \cdot \nabla  V, \hspace{1em} \ku_{\widetilde{\theta}} \cdot \nabla v= \frac{1}{{(1+ t)^{2- \f{1}{\al}}}} \ku_{\Theta} \cdot \nabla V\\
  \nonumber
  \ku_v \cdot \nabla v&=& \frac{1}{{(1+ t)^{2- \f{1}{\al}}}} \ku_V \cdot \nabla  V, \hspace{1em} \ku_{\widetilde{\theta}} \cdot \nabla v= \frac{1}{{(1+ t)^{2- \f{1}{\al}}}} \ku_{\Theta} \cdot \nabla V.
  \end{eqnarray}
In the new variables, the equation \eqref{30} transfers to
  $$
  V_{\tau}= \bigg( - \La^{\al}+ \f{1}{\al} \eta \cdot \nabla_\eta + (1- \f{1}{\al}) \bigg) V - \ku_V \cdot \nabla_\eta  V - \ku_{\Theta} \cdot \nabla_\eta  V- \ku_V \cdot \nabla_\eta  \Theta. 
  $$
  Equivalently, 
  \begin{eqnarray}\label{VV1}
  \left\{ 
  \begin{array}{l} 
  V_{\tau}= \cl V - \ku_V \cdot \nabla_\eta  V - \ku_{\Theta} \cdot \nabla_\eta  V- \ku_V \cdot \nabla_\eta  \Theta, \\ 
  V(0, \eta)= V_0(\eta),
  \end{array}  
  \right.
  \end{eqnarray} 
  where  
  \begin{equation}
  \label{lamb} 
  \cl= - \La^{\al}+ \f{1}{\al} \eta \cdot \nabla_\eta+ (1- \f{1}{\al}).
  \end{equation}
  As we shall see later, the formulation \eqref{VV1} is useful, when studying the long-time behavior of $V$ 
  (and $v$ respectively) in the Lebesgue spaces $L^p(\rtwo)$. Due to its special spectral properties of $\cl$ on the weighted spaces $L^2(2)$, the real advantage comes, when one considers $\cl$ and the associated semi-group $e^{\tau \cl}$ on the weighted space $L^2(2)$. 
  \subsection{Spectral analysis of the operator $\cl$ on $L^2(2)$} 
  Unlike the Laplacian (and the fractional Laplace operators over the appropriate domains),  which have $\si(-\La^\al)=(-\infty, 0]$,  the  operator $\cl$, with domain 
  $$
  D(\cl)=\{g\in H^\al(\rtwo)\cap L^2(2) : \cl g\in L^2(2) \} 
  $$
  pushes this spectral picture  to the left side of the imaginary axis with a gap. We take advantage of this fact, as it puts us  in a better situation that we can analysis the solutions. The following proposition, which is proved in \cite{GW1} for the case $\al=2$ and extended in \cite{HS} lists some important aspects of the spectral theory for  $\cl$. In the statements below, we quote the relevant results, as developed in our previous work \cite{HS}. 
  \begin{prop}(Proposition 2, \cite{HS})
  	\label{prop:10} 
  	Let $\mathcal{L}$ be,  as defined in \eqref{lamb}.  Then,  its spectrum on the space $L^2(2)(\rtwo)$, is described as follows 
  	\begin{enumerate}
  		\item  $\cl G=(1-\f{3}{\al}) G$ and $G\in L^2(2)(\rtwo)$, whence $G$ is an eigenfunction, corresponding to an eigenvalue $\la_0(\cl)=1-\f{3}{\al}$. 
  		\item \emph{The essential  spectrum:} Let $\mu \in \mathbb{C}$ be such that $\Re \mu  \leq  - \f{1}{\al}$ and define,
  		$\psi_{\mu} \in L^2$ such that 
  		\begin{equation}\label{con}
  		\widehat{\psi_{\mu}}(\xi)= |\xi|^{- \al \mu} e^{- |\xi|^{\al}}.
  		\end{equation}
  		Then $\psi_{\mu}$ is an  eigenfunction of the operator $\mathcal{L}$ with the corresponding eigenvalue\footnote{Note however that not all this eigenvalues are  isolated, hence they are in the essential spectrum.} $\la=1+\mu - \f{3}{\al}$.  In fact,
  		$$
  		\sigma_{ess.}(\mathcal{L}) =\bigg\{ \la \in \mathbb{C}:  \Re \la  \leq  1-  \f{4}{\al}  \bigg \}.
  		$$
  		In particular, 
  		$$
  		\si(\cl)=\{1-\f{3}{\al}\}\cup \bigg\{ \la \in \mathbb{C}:  \Re \la  \leq  1-  \f{4}{\al}  \bigg \},
  		$$
  		where $\la_0(\cl)=1-\f{3}{\al}$ is a simple eigenvalue, with an eigenfunction $G$. 
  		\item 	The operator $\cl$ defines a $C_0$ semi-group, $e^{\tau \cl}$ on $L^2(2)$.  In fact, we have the following  formulas  for its action 	
  		\begin{eqnarray}
  		\label{semi1}
  		\widehat{(e^{\tau \cl} f)}(\xi)&=& e^{(1-\f{3}{\al}) \tau} e^{- a(\tau) |\xi|^{\al}} \widehat{f}(e^{-\f{ \tau}{\al}} \xi),  \\ \label{semi2}
  		(e^{\tau \cl}f) (\eta)&=& \f{e^{(1-\f{1}{\al})\tau}}{a(\tau)^{\f{2}{\al}}} \int_{\rtwo} G \left(\f{\eta- p}{a(\tau)^{\f{1}{\al}}}\right) f(e^{\f{\tau}{\al}}p ) d p,
  		\end{eqnarray} 
  		where $a(\tau)= 1- e^{- \tau}$.
  		\item There is the commutation formula 
  		\begin{equation}
  		\label{28} 
  		e^{\tau \cl}\nabla = e^{-\f{\tau}{\al}} \nabla e^{\tau \cl}.
  		\end{equation}
  	\end{enumerate}
  \end{prop}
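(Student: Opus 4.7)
The plan is to transfer everything to the Fourier side, where $\cl$ becomes a first order differential operator plus a multiplier. Using $\widehat{\eta\cdot\nabla f}(\xi) = -2\hat f(\xi) - \xi\cdot\nabla_\xi \hat f(\xi)$, a direct computation yields the symbol representation
\begin{equation*}
\widehat{\cl f}(\xi) \;=\; -|\xi|^\al \hat f(\xi) + \left(1-\tfrac{3}{\al}\right)\hat f(\xi) - \tfrac{1}{\al}\,\xi\cdot\nabla_\xi \hat f(\xi).
\end{equation*}
Once this is in hand, items (1), (3) and (4) reduce to verifications. For (1), substitute $\hat G(\xi)=e^{-|\xi|^\al}$; because $\xi\cdot\nabla_\xi e^{-|\xi|^\al} = -\al |\xi|^\al e^{-|\xi|^\al}$, the two $|\xi|^\al$ contributions cancel and one is left with $(1-\tfrac{3}{\al})\hat G$. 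Membership $G\in L^2(2)$ is a consequence of Lemma \ref{le:10}.

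For (3), setting $g(\tau,\xi) := \widehat{e^{\tau\cl}f}(\xi)$, the evolution equation becomes
\begin{equation*}
\partial_\tau g \;=\; -|\xi|^\al g + \left(1-\tfrac{3}{\al}\right) g - \tfrac{1}{\al}\,\xi\cdot\nabla_\xi g, \qquad g(0,\xi)=\hat f(\xi).
\end{equation*}
The method of characteristics along the radial flow $\dot\xi = -\xi/\al$ (which transports $\xi\mapsto e^{-\tau/\al}\xi$) reduces the transport part to an ODE in $\tau$, whose integration uses the identity $a(\tau)+e^{-\tau}=1$ to produce exactly \eqref{semi1}. Formula \eqref{semi2} then follows by taking the inverse Fourier transform, using the scaling rule $\widehat{G(\cdot/s)}(\xi)=s^2 \hat G(s\xi)$ to identify the convolution kernel. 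The $C_0$ semi-group property and membership in $L^2(2)$ are verified from \eqref{semi2} directly. Item (4) is a one-line check: $\widehat{\nabla f}(\xi)=i\xi\,\hat f(\xi)$, so plugging $i\xi\hat f$ into \eqref{semi1} yields $ie^{-\tau/\al}\xi\cdot\widehat{e^{\tau\cl}f}(\xi)$, which is exactly $e^{-\tau/\al}\widehat{\nabla e^{\tau\cl}f}(\xi)$.

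The substantive part is (2). The same symbol computation as in (1) shows that $\widehat{\psi_\mu}(\xi)=|\xi|^{-\al\mu}e^{-|\xi|^\al}$ satisfies $\cl\psi_\mu=(1+\mu-\tfrac{3}{\al})\psi_\mu$, using $\xi\cdot\nabla_\xi|\xi|^{-\al\mu}=-\al\mu|\xi|^{-\al\mu}$. The subtle point is that for generic $\mu$ with $\Re\mu\le -\tfrac{1}{\al}$ the function $\psi_\mu$ fails to lie in $L^2(2)$ near the origin on the Fourier side (equivalently, $\psi_\mu$ decays too slowly in $\eta$), so these act as \emph{generalized} eigenfunctions and populate the essential spectrum rather than producing isolated point spectrum. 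The hard part, and the main obstacle in this step, is to show that nothing else is in $\si(\cl)$: the inclusion $\si_{ess}(\cl)\subset\{\Re\la\le 1-\tfrac{4}{\al}\}$ together with the isolation of $\la_0=1-\tfrac{3}{\al}$.

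To do this, the plan is to follow the strategy of \cite{HS} (and its $\al=2$ predecessor \cite{GW1}): conjugate $\cl$ by the Bessel weight to move the analysis to a space where the $\La^\al$ piece is a relatively compact perturbation of the drift-dilation operator $\tfrac{1}{\al}\eta\cdot\nabla+(1-\tfrac{1}{\al})$. The spectrum of the latter on $L^2(2)$ is computable by a Mellin/radial decomposition and yields exactly the half-plane $\{\Re\la\le 1-\tfrac{4}{\al}\}$, while Weyl's theorem on essential spectrum transfers this picture to $\cl$ intact; the eigenvalue $\la_0$ is then recovered as an isolated simple point outside the half-plane, with $G$ as eigenfunction by (1). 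Because this proposition is quoted verbatim from \cite{HS}, I would simply reference Proposition 2 there for (2), and include the short Fourier side verifications above for (1), (3) and (4).
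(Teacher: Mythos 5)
Your proposal is consistent with the paper's treatment: the paper does not prove this proposition at all, but quotes it verbatim as Proposition 2 of \cite{HS}, and your Fourier-side symbol computation $\widehat{\cl f}(\xi)=-|\xi|^\al\hat f+(1-\f{3}{\al})\hat f-\f{1}{\al}\xi\cdot\nabla_\xi\hat f$ together with the characteristics argument correctly verifies items (1), (3) and (4), while the hard half of (2) is deferred to \cite{HS} exactly as the authors do. One small correction to your discussion of item (2): on the Fourier side $L^2(2)$ corresponds to $H^2(\rtwo)$, and for $\Re\mu<-\f{1}{\al}$ the function $\widehat{\psi_\mu}(\xi)=|\xi|^{-\al\mu}e^{-|\xi|^\al}$ vanishes at the origin to order strictly greater than $1$, so $\psi_\mu$ \emph{does} lie in $L^2(2)$ and is a genuine eigenfunction (as the statement asserts); these eigenvalues belong to the essential spectrum because they fill an open half-plane and hence are not isolated (this is the point of the proposition's footnote), not because $\psi_\mu$ fails to belong to the space --- that failure occurs only on the boundary line $\Re\mu=-\f{1}{\al}$.
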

  The next lemma presents an estimate for the bounds of the semi-group $e^{\tau \cl}$ on $L^2(2)$. Note 
  the requirement $\hat{f}(0)=0$, which is necessary for the bounds to hold. 
  \begin{lem}(Proposition 3, \cite{HS})
  	\label{lem1} 
  	
  	Let $f \in L^2(2)$, $\hat{f}(0)= 0$ and $\ga=(\ga_1, \ga_2) \in {\mathbf N}^2, |\ga|=0,1$ and $0<\eps <<1$. Then there exists $C=C_\eps>0$,  such that for any $\tau> 0$, 
  	\begin{equation}  
  	\label{18}
  	\|\nabla^{\ga} ( e^{\tau \cl} f)\|_{L^2(2)} \leq C \f{e^{\left(1-\f{4}{\al}+\eps \right) \tau}}{a(\tau)^{\f{|\ga|}{\al}}} \|f\|_{L^2(2)},
  	\end{equation}
  	or 
  	\begin{equation}  
  	\label{1888}
  	\|\nabla^{\ga} ( e^{\tau \cl} f)\|_{L^2(2)} \leq C_\eps \|f\|_{L^2(2)} \left\{ 
  	\begin{array}{l}
  	\f{1}{\tau^{\f{|\ga|}{\al}}},  \ \ \ \ \ \ \ \  \tau \leq 1 \\
  	e^{\left(1-\f{4}{\al}+\eps \right)\tau }, \ \ \ \ \tau > 1 
  	\end{array}  
  	\right.\cdot
  	\end{equation}
  \end{lem}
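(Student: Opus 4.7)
The plan is to work entirely on the Fourier side and exploit the explicit formula \eqref{semi1}. By Plancherel and the standard identification $x_j \leftrightarrow i\partial_{\xi_j}$, the weighted norm is equivalent to a Sobolev norm on the Fourier side, namely $\|h\|_{L^2(2)}^2 \sim \|\hat{h}\|_{L^2}^2 + \|\nabla_\xi \hat{h}\|_{L^2}^2 + \|\De_\xi \hat{h}\|_{L^2}^2$, so the statement reduces to controlling appropriate $H^2_\xi$-norms of
$$
\widehat{\nabla^\ga e^{\tau\cl}f}(\xi)=(i\xi)^\ga\,e^{(1-\f{3}{\al})\tau} e^{-a(\tau)|\xi|^\al}\,\hat f(e^{-\tau/\al}\xi).
$$
Throughout, I would keep in mind two structural facts: the derivative $\nabla^\ga$ in $x$ multiplies by $(i\xi)^\ga$ in frequency, and the elementary bound $|\xi|^{|\ga|} e^{-a(\tau)|\xi|^\al/2}\lesssim a(\tau)^{-|\ga|/\al}$ will produce the claimed $a(\tau)^{-|\ga|/\al}$ prefactor.

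For the short-time regime $\tau \leq 1$, I would rely on the Gaussian-type kernel $e^{-a(\tau)|\xi|^\al}$ together with its derivatives to handle all occurrences of $\nabla_\xi$; the scaling $\hat f(e^{-\tau/\al}\xi)$ contributes only a bounded Jacobian factor $e^{2\tau/\al}$ upon the change of variables $\zeta=e^{-\tau/\al}\xi$, and every $\nabla_\xi$ landing on $\hat f(e^{-\tau/\al}\xi)$ brings a harmless $e^{-\tau/\al}$. Combined with the prefactor $e^{(1-3/\al)\tau}$ (bounded on $[0,1]$) and with $a(\tau)\sim\tau$, this yields the $\tau^{-|\ga|/\al}$ form of \eqref{1888}.

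The long-time regime $\tau > 1$ is the substantive step and requires the hypothesis $\hat f(0)=0$. In two dimensions one has the subcritical embedding $H^{2}\hookrightarrow C^{0,\beta}$ for every $\beta<1$, hence $\|\hat f\|_{C^{0,\beta}}\lesssim \|f\|_{L^2(2)}$. Combined with $\hat f(0)=0$, this gives the pointwise bound $|\hat f(\zeta)|\leq C_\beta|\zeta|^\beta\|f\|_{L^2(2)}$. Changing variables $\zeta=e^{-\tau/\al}\xi$ in the Fourier integral turns the weight into $e^{-(e^\tau-1)|\zeta|^\al}$, which concentrates $\zeta$ at scale $e^{-\tau/\al}$; plugging in $|\hat f(\zeta)|\lesssim|\zeta|^\beta$ and rescaling yields an additional factor $e^{-\beta\tau/\al}$ over the naive bound, producing the overall exponent $1-3/\al-\beta/\al+|\ga|/\al$ (the last term cancelling the $a(\tau)^{-|\ga|/\al}$ factor, which is $O(1)$ for $\tau>1$). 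Choosing $\beta=1-\al\eps$ then delivers the desired $e^{(1-4/\al+\eps)\tau}$ rate. When $\nabla_\xi$ or $\De_\xi$ falls on $\hat f(e^{-\tau/\al}\xi)$ — as needed to control $\|\widehat{\nabla^\ga e^{\tau\cl}f}\|_{H^2_\xi}$ — each derivative gains an additional $e^{-\tau/\al}$, which is more than enough; the H\"older trick is only required for the terms in which all $\xi$-derivatives land on $e^{-a(\tau)|\xi|^\al}$ and $(i\xi)^\ga$.

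\textbf{Main obstacle.}  The delicate step is precisely the last one: carefully accounting for the various terms produced by $\nabla_\xi$ and $\De_\xi$ acting on the product $(i\xi)^\ga e^{-a(\tau)|\xi|^\al}\hat f(e^{-\tau/\al}\xi)$ and showing that each of them admits the common bound $e^{(1-4/\al+\eps)\tau}a(\tau)^{-|\ga|/\al}$. The factors $|\xi|^{\al-1}$ arising from $\nabla_\xi[e^{-a(\tau)|\xi|^\al}]$ are mildly singular at the origin (we have $\al>1$, so they are locally integrable in 2D) and must be balanced against the Hölder decay of $\hat f$; closing the argument requires checking that the exponent $\beta$ can be taken arbitrarily close to $1$ without breaking the $L^2_\xi$-integrability of the resulting weighted expressions, which is the source of the $\eps$-loss in the final estimate.
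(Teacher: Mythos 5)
The paper does not actually prove this lemma --- it is imported verbatim as Proposition 3 of \cite{HS} --- but your Fourier-side argument via the explicit formula \eqref{semi1}, with $\|h\|_{L^2(2)}\sim\|\hat h\|_{H^2_\xi}$ and the H\"older bound $|\hat f(\zeta)|\lesssim |\zeta|^\beta\|f\|_{L^2(2)}$ coming from $H^2(\rtwo)\hookrightarrow C^{0,\beta}$, $\beta<1$, together with $\hat f(0)=0$, is exactly the standard (Gallay--Wayne type) proof of such semigroup bounds and it closes. Two small cautions. First, your claim that a $\xi$-derivative landing on $\hat f(e^{-\tau/\al}\xi)$ gains $e^{-\tau/\al}$ "which is more than enough" is an overstatement for the mixed term $\nabla_\xi e^{-a(\tau)|\xi|^\al}\cdot e^{-\tau/\al}(\nabla\hat f)(e^{-\tau/\al}\xi)$: there the chain-rule gain $e^{-\tau/\al}$ is exactly cancelled by the Jacobian $e^{+\tau/\al}$ from the change of variables $\zeta=e^{-\tau/\al}\xi$, and one must recover the missing $e^{-\tau/\al+\eps\tau}$ by H\"older against the concentrated Gaussian $e^{-(e^\tau-1)|\zeta|^\al}$ (whose $L^{q'}$ norm is $\sim e^{-2\tau/(\al q')}$, $q'\to 1$) combined with $\nabla\hat f\in H^1(\rtwo)\hookrightarrow L^{2q}$ --- so the H\"older-in-$\zeta$ trick is needed for this term too, not only for the term with no derivatives on $\hat f$. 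Second, the genuinely singular factor near $\xi=0$ is not $|\xi|^{\al-1}$ (a positive power, since $\al>1$) but the $|\xi|^{\al-2}$ produced by $\De_\xi e^{-a(\tau)|\xi|^\al}$, which is locally square-integrable in $\rtwo$ against the weight $|\xi|^{2\beta}$ precisely because $\al>1$ and $\beta$ is close to $1$; with these two points made explicit the argument is complete.
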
 
  Due to the formula \eqref{28}, we have the bound 
  \begin{equation}  
  \label{19}
  \|  ( e^{\tau \cl} \nabla  f)\|_{L^2(2)} \leq C_\eps \f{e^{\left(1-\f{5}{\al}+\eps \right) \tau}}{\min(1, \tau)^{\f{1}{\al}}} \|f\|_{L^2(2)}.
  \end{equation}
  By taking advantage of the representation formula \eqref{semi2}, Lemma \ref{le:10} and again the commutation formula \eqref{28},   one derives the action of $e^{\tau\cl}$ as an element of $B(L^p, L^q)$. 
  \begin{lem}
  	\label{aa}
  	Let $\al > 0$ and $1 \leq p \leq q \leq \infty$.  Then for any $\tau > 0$ 
  	\begin{eqnarray} \label{2.7}
  	\|e^{\tau \cl} f\|_{L^q} &\leq & C \f{e^{(1-\f{1}{\al}- \f{2}{\al p})\tau}}{(a(\tau))^{\f{2}{\al} (\f{1}{p}- \f{1}{q})}} \|f\|_{L^p},  \\ 
  	\label{2.8}
  	\|e^{\tau \cl} \nabla f\|_{L^q} &\leq & C \f{e^{(1-\f{2}{\al}- \f{2}{\al p})\tau}}{(a(\tau))^{\f{2}{\al} (\f{1}{2}+ \f{1}{p}- \f{1}{q})}} \|f\|_{L^p}. 
  	\end{eqnarray}
  \end{lem}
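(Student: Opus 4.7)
The approach is to reduce both estimates to Young's convolution inequality via the explicit kernel representation of the semigroup. Starting from \eqref{semi2}, I substitute $q = e^{\tau/\al}p$ in the integral to recast the formula as
$$
(e^{\tau \cl} f)(\eta) = e^{(1-\f{1}{\al})\tau}\,[K_\tau * f]\bigl(e^{\f{\tau}{\al}}\eta\bigr),
$$
where $K_\tau(x) = c(\tau)^{-2} G(x/c(\tau))$ and $c(\tau) = e^{\tau/\al} a(\tau)^{1/\al} = (e^\tau - 1)^{1/\al}$. A direct scaling argument gives $\|K_\tau\|_{L^r(\rtwo)} = c(\tau)^{-2(1-1/r)} \|G\|_{L^r(\rtwo)}$ for every $r \in [1,\infty]$, with $\|G\|_{L^r}$ finite by Lemma \ref{le:10}.

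For \eqref{2.7}, I select $r$ through Young's relation $1 + \f{1}{q} = \f{1}{p} + \f{1}{r}$; the condition $1 \leq p \leq q \leq \infty$ forces $r \in [1,\infty]$, so Young's inequality applies and yields $\|K_\tau * f\|_{L^q} \leq c(\tau)^{-2(1/p-1/q)} \|G\|_{L^r} \|f\|_{L^p}$. The outer dilation $\eta \mapsto e^{\tau/\al}\eta$ contributes a factor $e^{-2\tau/(\al q)}$ on the $L^q$-norm (using that the spatial dimension is two). Collecting exponents, the $e^\tau$ part becomes $(1-\f{1}{\al}) - \f{2}{\al q} - \f{2}{\al}(\f{1}{p}-\f{1}{q}) = 1 - \f{1}{\al} - \f{2}{\al p}$, while the $a(\tau)$ part is $a(\tau)^{-\f{2}{\al}(\f{1}{p}-\f{1}{q})}$, producing exactly \eqref{2.7}.

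For \eqref{2.8}, I apply the commutation identity \eqref{28} in the form $e^{\tau \cl} \nabla f = e^{-\tau/\al} \nabla e^{\tau \cl} f$. Differentiating the rescaled convolution representation above gives $\nabla_\eta (e^{\tau \cl} f)(\eta) = e^\tau [(\nabla K_\tau) * f](e^{\tau/\al}\eta)$, where $\nabla K_\tau(x) = c(\tau)^{-3} \nabla G(x/c(\tau))$ and hence $\|\nabla K_\tau\|_{L^r} = c(\tau)^{-1-2(1/p-1/q)} \|\nabla G\|_{L^r}$, with $\|\nabla G\|_{L^r}$ finite again by Lemma \ref{le:10}. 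Applying Young's inequality with the same choice of $r$, then incorporating the $e^{-2\tau/(\al q)}$ from the dilation and the $e^{-\tau/\al}$ from \eqref{28}, one arrives at \eqref{2.8} after the same consolidation of exponents.

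The argument is essentially algebraic: there is no conceptual obstacle beyond careful bookkeeping of the powers of $e^\tau$ and $a(\tau)$ produced by the scaling of the kernel, the commutation formula, and the outer dilation. The only genuine analytic input required is that $G$ and $\nabla G$ lie in every $L^r(\rtwo)$, which is already furnished by Lemma \ref{le:10} and is precisely what permits Young's inequality to cover the full range $1 \leq p \leq q \leq \infty$.
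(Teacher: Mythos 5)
Your proof is correct and is exactly the argument the paper intends: the paper offers only a one-line pointer to the representation formula \eqref{semi2}, Lemma \ref{le:10}, and the commutation identity \eqref{28}, and your rescaling of \eqref{semi2} into the convolution form $e^{(1-\f{1}{\al})\tau}[K_\tau * f](e^{\tau/\al}\eta)$ with $K_\tau(x)=c(\tau)^{-2}G(x/c(\tau))$, $c(\tau)=(e^\tau-1)^{1/\al}$, followed by Young's inequality and the exponent bookkeeping, is the standard way to fill in that sketch; all your exponents check out. The only cosmetic flaw is reusing the letter $q$ both for the Lebesgue exponent and for the substituted integration variable.
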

  Next, we discuss the spectral projection along the first eigenvalue and related operators. This is discussed in great detail in Section 3.4, \cite{HS}, so we just state the main results. 
  \begin{prop}
  	\label{prop:98} 
  	The Riesz projection onto the eigenvalue $\la_0(\cl)=1-\f{3}{\al}$ is given by the formula 
  	$$
  	\cp_0 f=\left(\int_{\rtwo} f(\eta) d\eta\right) G=\dpr{f}{1} G. 
  	$$
  	The operator $\cq_0:=Id-\cp_0$ is a projection  over the rest of the spectrum  
  	$\si_{ess.}(\cl)=\{\la: \Re \la\leq 1-\f{4}{\al}\}$.   Moreover, for all $\eps>0$, there are  the estimate 
  	\begin{eqnarray}
  	\label{300} 
  	& & 	\|\nabla^{\ga} ( e^{\tau \cl} \cq_0 f)\|_{L^2(2)} \leq C \f{e^{\left(1-\f{4}{\al}+\eps \right) \tau}}{a(\tau)^{\f{|\ga|}{\al}}} \|f\|_{L^2(2)}, \\
  	& & 
  	\label{313} 
  	\|  ( e^{\tau \cl} \cq_0 \nabla f)\|_{L^2(2)} \leq C \f{e^{\left(1-\f{5}{\al}+\eps \right) \tau}}{a(\tau)^{\f{1}{\al}}} \|f\|_{L^2(2)}. 
  	\end{eqnarray}
  \end{prop}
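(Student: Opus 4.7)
My plan is to follow the spectral analysis carried out in Section 3.4 of \cite{HS}. I would first identify the Riesz projection associated with the simple isolated eigenvalue $\lambda_0 = 1 - \f{3}{\alpha}$ by finding the left eigenfunction of $\cl$. A routine integration by parts on $\rtwo$, using the identity $(\eta \cdot \nabla)^{*} = -\eta \cdot \nabla - 2$, gives the formal adjoint
\[
\cl^{*} g \;=\; -\La^{\alpha} g \;-\; \f{1}{\alpha}\,\eta \cdot \nabla g \;+\; \left(1 - \f{3}{\alpha}\right) g,
\]
from which $\cl^{*} 1 = (1 - \f{3}{\alpha})\cdot 1$, so the constant function $1$ is a left eigenfunction for $\lambda_0$. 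Pairing against the right eigenfunction $G$, normalized by $\langle G, 1 \rangle = \wh G(0) = 1$ (Lemma \ref{le:10}), the standard Riesz formula yields $\cp_0 f = \langle f, 1 \rangle G$. A direct check shows $\cp_0^{2} = \cp_0$ and $\cl \cp_0 = \cp_0 \cl = \lambda_0 \cp_0$, so $\cq_0 = Id - \cp_0$ is a bounded projection onto the complementary spectral subspace and commutes with $\cl$ and with $e^{\tau \cl}$. Boundedness on $L^{2}(2)$ follows from $G \in L^{2}(2)$ together with $|\langle f, 1 \rangle| \leq C \|f\|_{L^{2}(m)}$ for $m > 1$, by Cauchy--Schwarz against $(1+|\eta|^{2})^{-m/2}\in L^{2}$.

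The bound \eqref{300} is then an immediate consequence of Lemma \ref{lem1}. Since $\wh G(0) = 1$,
\[
\widehat{\cq_0 f}(0) \;=\; \wh f(0) \;-\; \langle f, 1 \rangle\, \wh G(0) \;=\; \wh f(0) - \wh f(0) \;=\; 0,
\]
and $\|\cq_0 f\|_{L^{2}(2)} \leq C \|f\|_{L^{2}(2)}$. Applying \eqref{18} to $\cq_0 f$ (and noting $e^{\tau \cl} \cq_0 f = \cq_0 e^{\tau \cl} f$ since $\cq_0$ commutes with $e^{\tau \cl}$) produces \eqref{300}.

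Finally, for \eqref{313} I would combine \eqref{300} with the commutation identity \eqref{28}. The natural approach is to write
\[
e^{\tau \cl} \cq_0 \nabla f \;=\; \cq_0\, e^{\tau \cl} \nabla f \;=\; e^{-\tau/\alpha}\, \cq_0\, \nabla e^{\tau \cl} f,
\]
and then apply the $|\gamma| = 1$ case of \eqref{300} to $\nabla e^{\tau \cl} f$, which provides the desired decay. The primary obstacle, and the step I expect to demand most of the care, is that $\cq_0$ and $\nabla$ do not commute: one must decompose $f = \cp_0 f + \cq_0 f$, bound the $\cq_0$-contribution directly through \eqref{300}, and control the $\cp_0$-contribution $\langle f, 1\rangle\, \cq_0\, \nabla e^{(1-3/\alpha)\tau} G$ using the fact that $\nabla G$ is itself mean-zero together with a second use of \eqref{28} to extract the extra $e^{-\tau/\alpha}$ factor needed to reach the $e^{(1-5/\alpha+\epsilon)\tau}$ rate. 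The precise bookkeeping is worked out in \cite{HS}.
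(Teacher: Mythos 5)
Your identification of the Riesz projection and your proof of \eqref{300} are correct and follow the intended route: the paper itself gives no proof of Proposition \ref{prop:98} (it quotes Section 3.4 of \cite{HS}), but your adjoint computation $\cl^*[1]=(1-\f{3}{\al})\cdot 1$, the normalization $\dpr{G}{1}=\hat G(0)=1$, and the observation that $\widehat{\cq_0 f}(0)=0$ so that Lemma \ref{lem1} applies directly to $\cq_0 f$ are exactly the standard ingredients, and that part of the argument is complete.

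The treatment of \eqref{313}, however, contains a genuine gap, and you have in fact put your finger on the problematic term without resolving it. In the decomposition $f=\cp_0 f+\cq_0 f$, the $\cq_0 f$ piece is fine: $\nabla\cq_0 f$ is mean free, so $e^{\tau\cl}\nabla\cq_0 f=e^{-\tau/\al}\nabla e^{\tau\cl}\cq_0 f$, and \eqref{18} applied to $\cq_0 f$ yields the rate $e^{(1-\f{5}{\al}+\eps)\tau}a(\tau)^{-1/\al}$. But for the $\cp_0 f$ piece one gets, exactly,
$$
e^{\tau \cl}\,\cq_0 \nabla (\cp_0 f)\;=\;\dpr{f}{1}\, e^{\tau \cl}\nabla G\;=\;\dpr{f}{1}\, e^{\left(1-\f{4}{\al}\right)\tau}\,\nabla G,
$$
because $\nabla G$ has zero mean (so $\cq_0\nabla G=\nabla G$) and is an exact eigenfunction of $\cl$ with eigenvalue $1-\f{4}{\al}$: by \eqref{semi1}, $\widehat{e^{\tau\cl}\p_j G}(\xi)=e^{(1-\f{4}{\al})\tau}\, i\xi_j e^{-|\xi|^{\al}}$. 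There is no ``second use of \eqref{28}'' available: only one gradient is present, and once it has been commuted through, $e^{\tau\cl}G=e^{(1-\f{3}{\al})\tau}G$ with no further gain. Mean-freeness of $\nabla G$ only places it at the top of the essential spectrum, $1-\f{4}{\al}$; its first moments do not vanish ($\int \eta_j\,\p_j G\,d\eta=-\int G=-1$), so this contribution decays strictly slower than $e^{(1-\f{5}{\al}+\eps)\tau}$ whenever $\eps<\f{1}{\al}$, and taking $f=G$ shows the estimate cannot close as you have set it up. The bound \eqref{313} is valid only when this term is absent, i.e., either under the additional hypothesis $\hat f(0)=0$, or when the left-hand side is read as $e^{\tau\cl}\nabla\cq_0 f$ (projection applied to $f$ before differentiating). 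Under either reading your $\cq_0 f$ computation already constitutes the entire proof, and the $\cp_0$ term you were trying to salvage simply does not appear.
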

  
  \subsection{Spectral analysis on $L^2(m), 1<m<2$}
  \begin{corollary}
  	\label{cor:26} 
  	Let $\mathcal{L}$ be as defined in \eqref{lamb}.  Then,  its spectrum on the space $L^2(m)(\rtwo)$, $1<m <2 $ is described as follows 
  	\begin{enumerate}
  		\item $\la_0(\cl)=1-\f{3}{\al}$  is simple eigenvalue,   with an eigenfunction $G$.
  		\item 
  		$$
  		\si(\cl)\setminus \{1-\f{3}{\al}\} \subseteq \bigg\{ \la \in \mathbb{C}:  \Re \la  \leq  
  		1- \f{m+2}{\al}  \bigg \},
  		$$
  	\end{enumerate}
  	In addition, there are the bounds, for $|\ga|=0,1$, $f\in L^2(m), \hat{f}(0)=0$, we have 
  	\begin{equation}  
  	\label{16}
  	\|\nabla^{\ga} ( e^{\tau \cl} f)\|_{L^2(m)} \leq C \f{e^{\left(1-\f{m+2}{\al}+\eps \right) \tau}}{a(\tau)^{\f{|\ga|}{\al}}} \|f\|_{L^2(m)},
  	\end{equation}
  	and 
  	\begin{equation}  
  	\label{32}
  	\|  ( e^{\tau \cl} \nabla  f)\|_{L^2(m)} \leq C_\eps \f{e^{\left(1-\f{m+3}{\al}+\eps \right) \tau}}{\min(1, \tau)^{\f{1}{\al}}} \|f\|_{L^2(m)},
  	\end{equation}
  	
  \end{corollary}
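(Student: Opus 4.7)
The plan is to parallel the $m=2$ argument of Proposition~\ref{prop:10} and Lemma~\ref{lem1} from \cite{HS}, tracking how the weight parameter $m\in(1,2)$ enters.

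\emph{Step 1 (spectrum).} The eigenfunction $G$ for $\la_0 = 1-\f{3}{\al}$ is Schwartz, hence belongs to every $L^2(m)$. For the candidate essential eigenfunctions $\psi_\mu$ with $\wh{\psi_\mu}(\xi) = |\xi|^{-\al \mu}e^{-|\xi|^\al}$, Plancherel gives
\begin{equation*}
\|\psi_\mu\|_{L^2(m)} \sim \|(1-\De_\xi)^{m/2}\wh{\psi_\mu}\|_{L^2}.
\end{equation*}
Since $e^{-|\xi|^\al}$ is smooth with exponential decay, the only obstruction is the local $L^2(\rtwo)$-integrability near $\xi=0$ of the leading singular term $|\xi|^{-\al \Re \mu - m}$, which requires $\Re \mu < (1-m)/\al$. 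The corresponding eigenvalues $\la = 1+\mu-\f{3}{\al}$ thus satisfy $\Re \la < 1-\f{m+2}{\al}$, and closing the half-plane yields the stated essential spectrum. The gap $\f{m-1}{\al}>0$ (since $m>1$) keeps $\la_0$ isolated and simple.

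\emph{Step 2 (semigroup bounds, $|\ga|=0,1$).} The cleanest route to \eqref{16} is complex interpolation. From \eqref{semi1} and the change of variables $\eta = e^{-\tau/\al}\xi$ one obtains the unconditional endpoint bound
\begin{equation*}
\|\nabla^\ga e^{\tau\cl} f\|_{L^2} \leq C \f{e^{(1-2/\al)\tau}}{a(\tau)^{|\ga|/\al}}\|f\|_{L^2}, \qquad |\ga|=0,1,
\end{equation*}
valid without any zero-mean assumption. Lemma~\ref{lem1} supplies the corresponding $L^2(2)$ bound with decay rate $e^{(1-4/\al+\eps)\tau}$ for $f$ with $\wh f(0)=0$. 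Since $\widehat{e^{\tau\cl}f}(0) = e^{(1-3/\al)\tau}\wh f(0)$, the subspace $\{\wh f(0)=0\}$ is $e^{\tau\cl}$-invariant. Using $[L^2,L^2(2)]_{m/2}=L^2(m)$, complex interpolation then produces the exponent
\begin{equation*}
\left(1-\f{m}{2}\right)\left(1-\f{2}{\al}\right) + \f{m}{2}\left(1-\f{4}{\al}+\eps\right) = 1-\f{m+2}{\al}+\f{\eps m}{2},
\end{equation*}
which (after relabeling $\eps$) is exactly the exponent in \eqref{16}; the $a(\tau)$-exponent $|\ga|/\al$ is preserved under the interpolation.

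\emph{Step 3 (bound \eqref{32}).} Since $\widehat{\nabla f}(0)=0$ automatically, the commutation formula \eqref{28} gives $e^{\tau\cl}\nabla f = e^{-\tau/\al}\nabla e^{\tau\cl}f$, and the $|\ga|=1$ case of \eqref{16}, combined with $\min(1,\tau)^{1/\al}\sim a(\tau)^{1/\al}$, immediately yields \eqref{32}.

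\emph{Main obstacle.} The delicate point is making the complex interpolation rigorous. The functional $f\mapsto \wh f(0) = \int f\,dx$ is not bounded on $L^2(\rtwo)$, so $\{f\in L^2 : \int f=0\}$ is not a closed subspace of $L^2$ in a canonical sense, and the interpolation identity for the zero-mean subspaces is not entirely automatic. The cleanest fix is to bypass interpolation altogether and compute directly in Fourier: apply $(1-\De_\xi)^{m/2}$ to $e^{(1-3/\al)\tau}e^{-a(\tau)|\xi|^\al}\wh f(e^{-\tau/\al}\xi)$, and exploit $|\wh f(\eta)|\leq C|\eta|^{\min(1,m-1)}$ (from $\wh f\in H^m$, $m>1$, with $\wh f(0)=0$) together with a scaling argument to produce the extra $e^{-m\tau/\al}$ decay coming from the concentration of $e^{-(e^\tau-1)|\eta|^\al}$ near $\eta=0$, reproducing the proof of Lemma~\ref{lem1} in \cite{HS} with $m$ in place of $2$.
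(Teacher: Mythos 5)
Your proposal follows essentially the same route as the paper: the paper proves \eqref{16} and \eqref{32} precisely by interpolating the $L^2\to L^2$ bounds of Lemma \ref{aa} against the $L^2(2)\to L^2(2)$ bounds \eqref{18}--\eqref{19}, using $[L^2,L^2(2)]_{m/2}=L^2(m)$, and then reads off the spectral inclusion as in Proposition \ref{prop:10}; your exponent arithmetic and the commutation-formula derivation of \eqref{32} match. The subspace-interpolation subtlety you flag (the constraint $\hat f(0)=0$ is not closed in $L^2$) is genuine but is left unaddressed in the paper as well, so your direct-Fourier fallback is a welcome extra safeguard rather than a deviation.
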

  
  \begin{proof}
  	First, observe that since $L^2(m)(\rtwo)\subset L^1(\rtwo)$, the operators $\cp_0, \cq_0$ are well-defined. The eigenvalue $1-\f{3}{\al}$ is valid by inspection. 
  	The formula for the spectrum follows in an identical way as in Proposition \ref{prop:10}, once we establish the estimates \eqref{16} and \eqref{32}. 
  	Their  proofs are obtained by interpolation of the corresponding $L^2\to L^2$ bounds, found in \eqref{2.7} and \eqref{2.8} and the $L^2(2)\to L^2(2)$ bounds, in \eqref{18}, \eqref{19}. 
  \end{proof}

  \section{ A priori    estimates in $L^p$ spaces} 
  \label{sec:5} 
  Let us first record for future reference some expressions for $\|\Theta(\tau, \cdot)\|_{L^p}, \|\nabla \Theta(\tau, \cdot)\|_{L^p}$ and \\ $\|\Theta(\tau, \cdot)\|_{L^2(2)}$ 
  \begin{eqnarray}
  \label{40} 
  	\|\Theta\|_{L^p} &=& e^{(1- \f{1}{\al}- \f{2}{\al p}) \tau} \|\tilde{\theta}\|_{L^p}, 	\|\nabla \Theta\|_{L^p} = 
  	e^{(1 - \f{2}{\al p}) \tau} \|\nabla \tilde{\theta}\|_{L^p},  \\
  	\label{42} 
  		\|\Theta\|_{L^2(2)} &\leq &  C  e^{(1- \f{2}{\al}) \tau} \|\tilde{\theta}\|_{L^2(2)}.
  \end{eqnarray}
 Clearly, these formulas follow from the relation $\Theta(\tau, \eta)=
 e^{\tau\left(1- \f{1}{\al}\right)} \tilde{\theta}(e^{\f{\tau}{\al}} \eta)$. 
 We start our {\it a priori estimates with $V$}, more precisely for $\|V(\tau, \cdot)\|_{L^p(\rtwo)}, 2\leq p<\infty$. 
 \subsection{$L^p, 2\leq p<\infty$ bounds} 
 \begin{lem}
 	\label{lem5}
 	 There exists $\eps_0>0$, so that whenever $\|\nabla \tilde{\theta}\|_{L^{\f{2}{\al}}(\rtwo)}<\eps_0$, then the solution $V$ of \eqref{VV1} satisfies 
 	\begin{equation}
 	\label{60} 
 	\|V(\tau, \cdot)\|_{L^2(\rtwo)}\leq \|V_0(\cdot)\|_{L^2(\rtwo)} e^{\tau\left(1-\f{2}{\al}\right)}.
 	\end{equation}
 	Moreover, for every $2\leq p<\infty$, there exists $\eps_0=\eps_0(p)$, so that whenever 
 	$\|\nabla \tilde{\theta}\|_{L^{\f{2}{\al}}}<\eps_0(p)$, then for all $q: 2\leq q\leq p$, we have 
 		\begin{equation}
 		\label{65} 
 		\|V(\tau, \cdot)\|_{L^q}\leq C_q \|V_0(\cdot)\|_{L^q\cap L^2} e^{\tau\left(1-\f{2}{\al}\right)}.
 		\end{equation}
 \end{lem}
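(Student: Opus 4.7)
Plan. For \eqref{60} a standard $L^2$ energy estimate suffices; for \eqref{65}, however, the naive $L^q$ energy estimate only delivers the rate $1-\tfrac{1}{\al}-\tfrac{2}{\al q}$, which is strictly larger than the claimed $1-\tfrac{2}{\al}$ whenever $q>2$. I recover the sharp rate by converting the unused $L^{2q/(2-\al)}$-coercivity of $\La^\al$ into a super-linear damping via log-convex interpolation against the $L^2$ bound, producing a Bernoulli-type inequality whose large-time asymptotics precisely realise $1-\tfrac{2}{\al}$.

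\textbf{The $L^2$ bound.} Testing \eqref{VV1} against $V$, the two convective contributions $\int V\,\ku_V\cdot\nabla V$ and $\int V\,\ku_\Theta\cdot\nabla V$ vanish by the divergence-free property of $\ku_V,\ku_\Theta$. An integration by parts on the dilation term plus Plancherel gives $\int V\,\cl V=(1-\tfrac{2}{\al})\|V\|_{L^2}^2-\|\La^{\al/2}V\|_{L^2}^2$. The forcing interaction is controlled by H\"older with the triple $(\tfrac{2-\al}{4},\tfrac{2-\al}{4},\tfrac{\al}{2})$, $L^{4/(2-\al)}$-boundedness of the Riesz transform $V\mapsto\ku_V$, the Sobolev embedding $\|V\|_{L^{4/(2-\al)}}\le C\|\La^{\al/2}V\|_{L^2}$, and the scale identity $\|\nabla\Theta\|_{L^{2/\al}}=\|\nabla\tilde\theta\|_{L^{2/\al}}$ from \eqref{40}: one obtains $|\int V\,\ku_V\cdot\nabla\Theta|\le C\|\La^{\al/2}V\|_{L^2}^2\|\nabla\tilde\theta\|_{L^{2/\al}}$. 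Choosing $\eps_0$ so that $C\eps_0<1$ the dissipation absorbs this term, and Gr\"onwall delivers \eqref{60}.

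\textbf{The $L^q$ differential inequality for $q\in(2,p]$.} Testing \eqref{VV1} against $|V|^{q-2}V$, the convective terms again vanish, and by the sharp coercivity \eqref{a:34} we have $\int|V|^{q-2}V\,\La^\al V\ge c_q\|V\|_{L^{2q/(2-\al)}}^q$. The same H\"older/Riesz/Sobolev scheme bounds the forcing term by $C_q\|V\|_{L^{2q/(2-\al)}}^q\|\nabla\tilde\theta\|_{L^{2/\al}}$. Since the Riesz constant on $L^{2q/(2-\al)}$ is monotone in $q$, a single $\eps_0(p)$ works uniformly for every $q\in[2,p]$; absorbing half the dissipation one arrives at
\begin{equation*}
\tfrac{1}{q}\tfrac{d}{d\tau}\|V\|_{L^q}^q\le A_q\|V\|_{L^q}^q-\tfrac{c_q}{2}\|V\|_{L^{2q/(2-\al)}}^q,\quad A_q:=1-\tfrac{1}{\al}-\tfrac{2}{\al q}.
\end{equation*}
Log-convex interpolation $\|V\|_{L^q}\le\|V\|_{L^2}^{\theta}\|V\|_{L^{2q/(2-\al)}}^{1-\theta}$ with $\theta=\tfrac{\al}{q-2+\al}$ rearranges into $\|V\|_{L^{2q/(2-\al)}}^q\ge\|V\|_{L^q}^{q\delta}\|V\|_{L^2}^{-q\al/(q-2)}$ for $\delta:=\tfrac{q-2+\al}{q-2}$; inserting the $L^2$ bound from the preceding step and setting $e(\tau):=\|V(\tau)\|_{L^q}^q$ turns the inequality into the Bernoulli form
\begin{equation*}
e'(\tau)\le qA_q\,e(\tau)-B_q\,e^{\gamma_q\tau}\,e(\tau)^{\delta},\quad\gamma_q:=\tfrac{q(2-\al)}{q-2},\;B_q\propto\|V_0\|_{L^2}^{-q\al/(q-2)}.
\end{equation*}

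\textbf{Solving the ODE and conclusion.} The substitution $F(\tau):=e(\tau)e^{-qA_q\tau}$ together with the (readily verified) algebraic identity $\gamma_q+qA_q(\delta-1)=1$ reduces the inequality to the scale-free form $F'(\tau)\le-B_q\,e^{\tau}F(\tau)^{\delta}$, whose explicit separation-of-variables solution yields $F(\tau)\le[B_q(\delta-1)(e^\tau-1)]^{-1/(\delta-1)}$. Undoing the substitution and using the companion identity $qA_q-1/(\delta-1)=q(1-\tfrac{2}{\al})$ gives $\|V(\tau)\|_{L^q}\le C_q\|V_0\|_{L^2}e^{(1-\tfrac{2}{\al})\tau}$ for $\tau\ge 1$; for $\tau\in[0,1]$ dropping the dissipation term and Gr\"onwall provide $\|V(\tau)\|_{L^q}\le\|V_0\|_{L^q}e^{A_q}$. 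Combined, these two regimes produce \eqref{65}. The main obstacle is spotting the two algebraic identities above: they are what transmute the apparent energy rate $A_q$ into the sharp $1-\tfrac{2}{\al}$, and without them the Bernoulli argument cannot detect the extra decay hidden inside the $L^{2q/(2-\al)}$ dissipation.
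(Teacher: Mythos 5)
Your argument is correct, and for the $q>2$ case it takes a genuinely different route from the paper. The $L^2$ step is essentially the paper's (the paper absorbs the forcing term into the coercivity term $\|V\|_{L^{4/(2-\al)}}^2$ supplied by \eqref{a:34} rather than into $\|\La^{\al/2}V\|_{L^2}^2$, but via Sobolev embedding these are interchangeable). For $2<q\leq p$ the paper also starts from the identical energy inequality with the problematic linear rate $A_q=1-\f{1}{\al}-\f{2}{\al q}$, but then proceeds \emph{linearly}: it adds an artificial damping $C\|V\|_{L^q}^q$ with $C$ large, splits it by Gagliardo--Nirenberg as $\f{C_q}{2}\|V\|_{L^{2q/(2-\al)}}^q+D_q\|V\|_{L^2}^q$, absorbs the first piece into the dissipation, and treats the second as an inhomogeneous forcing decaying like $e^{q(1-\f{2}{\al})s}$, which the convolution estimate \eqref{318} converts into the sharp rate. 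You instead keep the $L^{2q/(2-\al)}$ dissipation as a \emph{superlinear} damping via the reverse interpolation $\|V\|_{L^{2q/(2-\al)}}^q\geq \|V\|_{L^q}^{q\de}\|V\|_{L^2}^{-q\al/(q-2)}$ and solve the resulting Bernoulli inequality exactly; I checked both algebraic identities ($\ga_q+qA_q(\de-1)=1$ and $qA_q-\f{1}{\de-1}=q(1-\f{2}{\al})$) and they hold. Both proofs consume the same two ingredients (the $L^2$ decay \eqref{60} and the $L^{2q/(2-\al)}$ coercivity, with the same $q$-dependent smallness $\eps_0(q)\sim q^{-1}$ forced by the constant in \eqref{a:34}), but your nonlinear ODE yields the slightly stronger smoothing statement $\|V(\tau)\|_{L^q}\leq C_q\|V_0\|_{L^2}e^{(1-\f{2}{\al})\tau}$ for $\tau\geq 1$, with no dependence on $\|V_0\|_{L^q}$ beyond the short-time regime, whereas the paper's linear Duhamel argument is more robust (it does not require tracking positivity of the energy when separating variables, a minor technicality you should at least mention) and generalizes more readily to the bootstrap in Lemma \ref{lem7}, where the same inequality \eqref{t:62} is reused with a different right-hand side.
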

{\bf Remarks:} 
\begin{itemize} 
	\item Note that since $\|\nabla \tilde{\theta}\|_{L^{\f{2}{\al}}}=\|\nabla \Theta\|_{L^{\f{2}{\al}}}$, Proposition \ref{prop:15} ensures that the assumptions are satisfied, whenever $\|f\|_{\dot{W}^{1-\al, \f{2}{\al}}}<<1$. 
	\item While we do require  the smallness of $\|\nabla \tilde{\theta}\|_{L^{\f{2}{\al}}}=\|\nabla \Theta\|_{L^{\f{2}{\al}}}$, it is important to point out that 
we {\it do not require
	$ \|V_0(\cdot)\|_{L^q}$ to be small}. 
	\item From our proof, we can only show \eqref{65}, under the assumption 
	 $\lim_{p\to \infty} \eps_0(p)=0$. In other words, for each $p>2$, we need to impose that $\|\nabla \tilde{\theta}\|_{L^{\f{2}{\al}}}$ is progressively smaller and smaller, before we can claim \eqref{65}. This may or may not be optimal, but this is why we cannot claim that there is an universal $\eps_0$, which would guarantee \eqref{65} for all  $1<q<\infty$. 
\end{itemize}
\begin{proof}
As pointed out, $\|\nabla \Theta\|_{L^{\f{2}{\al}}}=\|\nabla \tilde{\theta}\|_{L^{\f{2}{\al}}}<<1$. Let $p>1$ and  take  the dot product of the equation \eqref{VV1} with $|V|^{p-2} V$. Using \eqref{a:34}, we have 
$$
\dpr{\La^\al V}{|V|^{p-2} V}\geq C_p \|V\|^p_{L^{\f{2 p}{2- \al}}}. 
$$
Supplementing this estimate with integration by parts implies 
$$
 \f{1}{p} \partial_{\tau} \|V\|^p_{L^p}+ (\f{2}{\al p}+ \f{1}{\al }- 1) \|V\|^p_{L^p}+ C_p \|V\|^p_{L^{\f{2 p}{2- \al}}} =    \bigg| \int (\ku_V \cdot \nabla \Theta)  |V|^{p- 2} V d\eta \bigg|\leq 
C \|\nabla \Theta\|_{L^{\f{2}{\al}}} \|V\|^p_{L^{\f{2 p}{2- \al}}}. 
$$
	where we have used the H\"older's inequality and $\|\ku_V\|_{L^{\f{2 p}{2- \al}}} \leq C \|V\|_{L^{\f{2 p}{2- \al}}}$. 
	
	Specializing first to $p=2$ and taking into account the smallness  $\|\nabla \Theta\|_{L^{\f{2}{\al}}}<<1$, we obtain 
\begin{equation}
\label{98} 
	\partial_{\tau} \|V\|^2_{L^2}+ 2(\f{2}{\al } - 1) \|V\|^2_{L^2}+ C \|V\|^2_{L^{\f{4}{2- \al}}}\leq \f{C}{2}   \|V\|^2_{L^{\f{4}{2- \al}}},
\end{equation}
	in particular $\partial_{\tau} \|V\|^2_{L^2}+ 2(\f{2}{\al } - 1) \|V\|^2_{L^2}\leq 0$. Resolving this differential inequality implies \eqref{60}. 
	
	For the general case, and by taking into account \eqref{60}, we can perform similar arguments. A point of notable difference is that since for sufficiently large $p$ (and we do need \eqref{65} for arbitrarily large $p$!), one may have that 
	$(\f{2}{\al p}+ \f{1}{\al }- 1) <0$, which is problematic. In order to fix this issue, we add $C \|V\|^p_{L^p}, C>>1$ to the energy estimate. We obtain 
	\begin{eqnarray*}
		& & \f{1}{p} \partial_{\tau} \|V\|^p_{L^p}+ (C+\f{2}{\al p}+ \f{1}{\al }- 1) \|V\|^p_{L^p}+ C_p \|V\|^p_{L^{\f{2 p}{2- \al}}}  
		\leq 
		C \|\nabla \Theta\|_{L^{\f{2}{\al}}} \|V\|^p_{L^{\f{2 p}{2- \al}}}+ C \|V\|^p_{L^p}\leq \\
		&\leq & 	\f{C_p}{2}   \|V\|^p_{L^{\f{2 p}{2- \al}}} + \f{C_p}{2}  \|V\|^p_{L^{\f{2 p}{2- \al}}}+ D_p  \|V\|^p_{L^2},
	\end{eqnarray*}	
	where in the last inequality we have used the smallness of $\|\nabla \Theta\|_{L^{\f{2}{\al}}}$ and the Gagliardo - Nirenberg's estimate  
	$\|V\|^p_{L^p}\leq \f{C_p}{2}  \|V\|^p_{L^{\f{2 p}{2- \al}}}+ D_p  \|V\|^p_{L^2}$. As a consequence, since $\|V\|_{L^2}\leq C e^{(1-\f{2}{\al})\tau}$, for some constant $R_p$, 
\begin{equation}
\label{70} 
	\partial_{\tau} \|V\|^p_{L^p}+p(C+\f{2}{\al p}+ \f{1}{\al }- 1) \|V\|^p_{L^p} \leq p D_p \|V\|_{L^2}^p\leq 	R_p  
	e^{p \tau\left(1-\f{2}{\al}\right)}.
\end{equation}
Resolving the differential inequality \eqref{70} leads us to 
$$ 
\|V(\tau, \cdot)\|^p_{L^p}\leq \|V_0\|^p_{L^p} e^{-p\tau (C+\f{2}{\al p}+ \f{1}{\al }- 1) } + R_p 
\int_0^\tau 
e^{-p(\tau-s) (C+\f{2}{\al p}+ \f{1}{\al }- 1) }  
e^{p s\left(1-\f{2}{\al}\right)} ds.
$$
Applying \eqref{318},  with comfortably large $C$ yields the bound \eqref{65} for $q=p$. 
	
	Let us finish with a few words regarding an extension of this to all $2\leq q\leq p$, as announced in \eqref{65}, which also elucidates the reason one cannot possibly extend this to all $p<\infty$.  If one traces the argument above, we see that since $C_p\sim p^{-1}$, one needs smallness assumption in the form $\|\nabla \Theta\|_{L^{\f{2}{\al}}}\leq Cp^{-1}$, which clearly cannot hold for all $p<\infty$. On the other hand, for each fixed $p<\infty$, we can find $\eps_p\sim p^{-1}$, so that $\|\nabla \Theta\|_{L^{\f{2}{\al}}}\leq C q^{-1}$ for all $2\leq q\leq p$, which in turn  implies \eqref{65} by the above arguments. 
\end{proof}
Using the formulas \eqref{26}, we arrive at the following corollary of Lemma \ref{lem5}.  
\begin{cor}
	\label{cor:1} 
	Let $2\leq p<\infty$, and $\tilde{\theta}: \|\nabla \tilde{\theta}\|_{L^{\f{2}{\al}}}<\eps_0(p)$.  Then, for every initial data $v_0\in L^p\cap L^2$ of the IVP \eqref{30}, we have  the decay bound
	\begin{equation}
	\label{90} 
	\|v(t, \cdot)\|_{L^q(\rtwo)}\leq C \|v_0\|_{L^q}  (1+t)^{\f{\f{2}{q}-1}{\al}}, 2\leq q\leq p.
	\end{equation}
\end{cor}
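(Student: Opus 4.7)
The plan is to use the scaling change of variables \eqref{24}--\eqref{26} to translate the $L^q$ bounds on $V(\tau,\cdot)$ supplied by Lemma \ref{lem5} back into $L^q$ bounds on $v(t,\cdot)$. Since the corollary asserts a bound on $\|v(t,\cdot)\|_{L^q}$ for $2\leq q\leq p$, and Lemma \ref{lem5} already guarantees the bound \eqref{65} for $\|V(\tau,\cdot)\|_{L^q}$ under the same smallness hypothesis $\|\nabla \tilde{\theta}\|_{L^{2/\al}}<\eps_0(p)$, this is really a bookkeeping computation matching the scaling exponents.

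First, I would start from the relation $v(t,x)=(1+t)^{-(1-\f{1}{\al})} V\left(\f{x}{(1+t)^{1/\al}},\ln(1+t)\right)$ and change variables $\eta = x(1+t)^{-1/\al}$, $dx = (1+t)^{2/\al} d\eta$ in the integral defining $\|v(t,\cdot)\|_{L^q}^q$. This produces the identity
\begin{equation*}
\|v(t,\cdot)\|_{L^q} = (1+t)^{\f{2}{\al q}-1+\f{1}{\al}}\,\|V(\tau,\cdot)\|_{L^q},\qquad \tau=\ln(1+t),
\end{equation*}
valid for every $1\leq q\leq \infty$.

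Next, for $2\leq q\leq p$, I invoke \eqref{65} from Lemma \ref{lem5} to get $\|V(\tau,\cdot)\|_{L^q}\leq C_p\,\|V_0\|_{L^q\cap L^2}\,e^{\tau(1-2/\al)}$. Since $V(0,\eta)=v_0(\eta)$ by \eqref{26} (the factor $e^{\tau(1-1/\al)}$ evaluated at $\tau=0$ is $1$), we have $\|V_0\|_{L^r}=\|v_0\|_{L^r}$ for every $r$, and the assumption $v_0\in L^p\cap L^2$ controls $\|v_0\|_{L^q\cap L^2}$ for all intermediate $q$. Converting $e^{\tau(1-2/\al)}=(1+t)^{1-2/\al}$, I combine this with the scaling identity to obtain
\begin{equation*}
\|v(t,\cdot)\|_{L^q}\leq C_p\,\|v_0\|_{L^q\cap L^2}\,(1+t)^{\f{2}{\al q}-1+\f{1}{\al}+1-\f{2}{\al}} = C_p\,\|v_0\|_{L^q\cap L^2}\,(1+t)^{\f{1}{\al}\left(\f{2}{q}-1\right)},
\end{equation*}
which is precisely \eqref{90}.

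There is no real obstacle here; the only thing to be slightly careful about is that Lemma \ref{lem5} controls the scaled solution in terms of $\|V_0\|_{L^q\cap L^2}$, not $\|V_0\|_{L^q}$ alone, so strictly speaking the constant $C$ in \eqref{90} absorbs a dependence on $\|v_0\|_{L^2}$ (which is harmless because $L^p\cap L^2\subset L^q$ for $2\leq q\leq p$). Note also that the range of validity $2\leq q\leq p$ matches exactly the range in \eqref{65}, and the smallness requirement $\|\nabla\tilde{\theta}\|_{L^{2/\al}}<\eps_0(p)$ is inherited without change.
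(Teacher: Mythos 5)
Your proof is correct and follows exactly the paper's route: the paper obtains Corollary \ref{cor:1} precisely by combining the scaling identity implicit in \eqref{26} with the bound \eqref{65} of Lemma \ref{lem5}, and your exponent bookkeeping $\f{2}{\al q}-1+\f{1}{\al}+1-\f{2}{\al}=\f{1}{\al}\left(\f{2}{q}-1\right)$ checks out. Your side remark that the constant really absorbs $\|v_0\|_{L^q\cap L^2}$ rather than $\|v_0\|_{L^q}$ alone is a fair (and accurate) reading of what Lemma \ref{lem5} actually delivers.
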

Note that in the estimate \eqref{90}, one does not get {\it any} decay for the case $q=2$. This is slightly worse than the corresponding results in \cite{Dai}, where it is shown that $\lim_{t\to \infty} \|v(t, \cdot)\|_{L^2(\rtwo)}=0$. On the other hand, even in the case of zero forcing, $f=0$,   Niche and  Schonbek, \cite{NS} have established that the rate of decay for $\|v(t, \cdot)\|_{L^2(\rtwo)}$ could be arbitrarily  slow, in particular {\it one should not be able to get any power rate for the case $q=2$}.

Next, we present some {\it a posteriori estimates} for $\|V(\tau, \cdot)\|_{L^p}$ in the cases $1<p<2$. 
\subsection{$L^p, 1<p<2$ bounds}
In this section, we show that the estimates obtained in Lemma \ref{lem5} could be improved substantially, if one assumes that $V_0\in L^1(\rtwo)$, or even $V_0\in L^p(\rtwo), 1<p<2$. We have the following 
\begin{lem}
	\label{lem7} 
	Assume that the smallness condition $\|\nabla \tilde{\theta}\|_{L^{\f{2}{\al}}(\rtwo)}<\eps_0(\f{2}{\al-1})$ and $\nabla\tilde{\theta}\in L^{\f{4}{2+\al}}(\rtwo)$. Let 
	$V_0\in L^1(\rtwo)\cap L^\infty(\rtwo)$. Then, 
	\begin{equation}
	\label{115} 
	\|V(\tau, \cdot)\|_{L^1\cap L^2}\leq C e^{\tau(1-\f{3}{\al})}. 
	\end{equation}
	Moreover, for every $2<p<\infty$, there exists $\eps_0=\eps_0(p)$, so that whenever $\tilde{\theta}$ satisfies the smallness condition $ \|\nabla \tilde{\theta}\|_{L^{\f{2}{\al}}(\rtwo)}<\eps_0(p)$
		\begin{equation}
		\label{125} 
		\|V(\tau, \cdot)\|_{L^q}\leq C_p e^{\tau(1-\f{3}{\al})}, 2<q<p.
		\end{equation}
\end{lem}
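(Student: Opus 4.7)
The target rate $e^{(1-3/\al)\tau}$ coincides with the leading eigenvalue of $\cl$, and equivalently with the exact $L^1 \to L^1$ decay rate of the semigroup $e^{\tau\cl}$ from Lemma \ref{aa} with $p=q=1$. The strategy is to exploit the $L^1$ information on $V_0$ (absent in Lemma \ref{lem5}) via an endpoint energy estimate at $p=1$, and then bootstrap to $L^2$ and higher $L^q$ through Duhamel's formula combined with the Gronwall-type Lemma \ref{gro}.

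For the $L^1$ bound I would push the $L^p$ energy approach of Lemma \ref{lem5} all the way down to $p=1$. The Chamorro--Lemari\'e coercivity \eqref{a:31} remains valid for every $p \geq 1$, and the sole surviving nonlinearity $\int |V|^{p-2} V\,\ku_V \cdot \nabla\Theta\,d\eta$ (the two transport terms cancel by divergence-freeness of $\ku_V, \ku_\Theta$) is estimated by a three-factor H\"older with exponents $\bigl(\tfrac{2p}{(p-1)(2-\al)},\tfrac{2p}{2-\al},\tfrac{2}{\al}\bigr)$, giving the bound $C\|V\|_{L^{2p/(2-\al)}}^{p}\|\nabla\Theta\|_{L^{2/\al}}$. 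Since $\|\nabla\Theta\|_{L^{2/\al}} = \|\nabla\tilde\theta\|_{L^{2/\al}} < \eps_0$ is scaling invariant (by \eqref{40}), absorbing into the coercivity produces, after a standard $\operatorname{sign}_\ve$-regularization at $p=1$,
\begin{equation*}
\tfrac{1}{p}\,\partial_\tau \|V\|_{L^p}^{p} + \bigl(\tfrac{1}{\al}+\tfrac{2}{\al p}-1\bigr)\|V\|_{L^p}^{p} \leq 0.
\end{equation*}
At $p=1$, the coefficient equals $\tfrac{3}{\al}-1>0$, producing $\|V(\tau)\|_{L^1}\leq \|V_0\|_{L^1}e^{(1-3/\al)\tau}$.

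The $L^2$ bound requires a different approach, since the direct energy method at $p=2$ only reproduces the slower Lemma \ref{lem5} rate. I would use Duhamel,
\begin{equation*}
V(\tau) = e^{\tau\cl} V_0 - \int_0^\tau e^{(\tau-s)\cl} \nabla\cdot\bigl[(\ku_V+\ku_\Theta)V+\ku_V\Theta\bigr](s)\,ds,
\end{equation*}
with the linear piece bounded by $C a(\tau)^{-1/\al} e^{(1-3/\al)\tau}\|V_0\|_{L^1\cap L^2}$ via \eqref{2.7} at $(p,q)=(1,2)$. For the nonlinear integral, \eqref{2.8} at $(p,q)=(2,2)$ supplies the integrable kernel $a(\tau-s)^{-1/\al} e^{(1-3/\al)(\tau-s)}$, and the $L^2$ sources are controlled through H\"older using Proposition \ref{prop:12} ($\|\tilde\theta\|_{L^{2/(\al-1)}}\leq 2\eps_0$), the $L^1$ bound from the previous step, and the Lemma \ref{lem5} $L^q$ bounds on $[2,\tfrac{2}{\al-1}]$. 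Keeping one factor of $\|V(s)\|_{L^2}$ as the unknown and viewing the remainder as an $e^{-\kappa s}$-decay (with $\kappa>0$ arising from the $L^1$ bound and the scaling decay of $\Theta$), the resulting inequality fits the hypothesis of Lemma \ref{gro} with $\mu = \tfrac{3}{\al}-1$ and $a=\tfrac{1}{\al}$, yielding $\|V(\tau)\|_{L^2}\leq Ce^{(1-3/\al)\tau}$. The bound at $2<q<p$ then follows by repeating the $L^p$ energy argument at exponent $q$, adding $C\|V\|_{L^q}^q$ to both sides as in Lemma \ref{lem5} (so the stronger smallness $\eps_0(p)$ is used to absorb the nonlinear contribution at that level) and invoking Gagliardo--Nirenberg together with the freshly-established $L^2$ decay to close the estimate via Lemma \ref{gro}.

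The principal obstacle is closing the Duhamel bootstrap for $L^2$: the quadratic term $\|\ku_V V\|_{L^2}$ and the cross term $\|\ku_\Theta V\|_{L^2}$ naturally call for H\"older pairs such as $(L^{2/(\al-1)},L^{2/(2-\al)})$ whose second exponent exceeds the range $[2,\tfrac{2}{\al-1}]$ covered by Lemma \ref{lem5} as soon as $\al>\tfrac{3}{2}$. Bridging this gap requires combining the interpolation $\|V\|_{L^{2/(3-\al)}}\leq \|V\|_{L^1}^{2-\al}\|V\|_{L^2}^{\al-1}$, the Step~1 $L^1$ bound, and the time-integrated dissipation $\int_0^\tau \|V(s)\|_{L^{2/(2-\al)}}\,ds$ produced by the $p=1$ energy estimate. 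The auxiliary hypothesis $\nabla\tilde\theta\in L^{4/(2+\al)}$ enters here to furnish the cross-term estimate $\|\ku_V\Theta\|_{L^1}\leq \|V\|_{L^{4/(2-\al)}}\|\nabla\tilde\theta\|_{L^{4/(2+\al)}}$ via the conjugate pairing $(L^{4/(2-\al)},L^{4/(2+\al)})$ at precisely the decay rate demanded by Lemma \ref{gro}.
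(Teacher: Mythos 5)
Your route is genuinely different from the paper's --- the paper proves \eqref{115} by a purely energy-based finite bootstrap, alternating a $p=1$ estimate with a $p=2$ estimate and gaining a factor $e^{(\f{1}{2}-\f{1}{\al})\tau}$ per pass --- but both of your two new steps have gaps. For the $L^1$ step you absorb $\int \mathrm{sign}(V)\,\ku_V\cdot\nabla\Theta\,d\eta \leq C\|V\|_{L^{2/(2-\al)}}\|\nabla\tilde{\theta}\|_{L^{2/\al}}$ into the dissipation, which requires the quantitative coercivity \eqref{a:31} at the endpoint $p=1$, namely $\int\mathrm{sign}(V)\La^{\al}V \geq c\|V\|_{L^{2/(2-\al)}}$. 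The paper only establishes that lower bound for $p\geq 2$ (Lemma \ref{le:90}, \eqref{a:30}--\eqref{a:34}); in its own proof of this very lemma it keeps only the positivity \eqref{a:20} at $p=1$ and discards the dissipation, and the analogous endpoint statement fails at $\al=2$ (take $f\geq 0$ vanishing to infinite order at the boundary of its support: $\int\mathrm{sign}(f)(-\De f)=0$ while $\|f\|_{L^\infty}>0$), so the $p=1$ coercivity is not something you may simply assert. Without it your differential inequality only gives $\partial_\tau\|V\|_{L^1}+(\f{3}{\al}-1)\|V\|_{L^1}\leq C\eps_0 e^{(1-\f{2}{\al})\tau}$, hence $\|V\|_{L^1}\leq Ce^{(1-\f{2}{\al})\tau}$, short of the target by $e^{\tau/\al}$. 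The hypothesis $\nabla\tilde{\theta}\in L^{4/(2+\al)}$ is there precisely to repair this: measuring $\nabla\Theta$ off the critical exponent yields the time decay $\|\nabla\Theta\|_{L^{4/(2+\al)}}=e^{(\f{1}{2}-\f{1}{\al})\tau}\|\nabla\tilde{\theta}\|_{L^{4/(2+\al)}}$, and the paper iterates this gain finitely many times.

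The $L^2$ step via Duhamel faces a harder obstruction. The energy method annihilates the transport terms $\ku_V\cdot\nabla V$ and $\ku_\Theta\cdot\nabla V$ outright; Duhamel resurrects $e^{(\tau-s)\cl}\nabla\cdot(\ku_V V)$, and at this stage you have no decaying bound on $\|\ku_V\|_{L^\infty}$ (Corollary \ref{cor:2} sits downstream of this lemma), so you cannot keep one factor $\|V(s)\|_{L^2}$ with the companion in $L^\infty$. Treating the term as a pure source gives $\|\ku_V V\|_{L^2}\leq C\|V\|_{L^4}^2\leq Ce^{(2-\f{4}{\al})s}$, which decays strictly slower than the target $e^{(1-\f{3}{\al})s}$ (the deficit $e^{(1-\f{1}{\al})s}$ grows), so convolution against the kernel of \eqref{2.8} returns only $e^{(2-\f{4}{\al})\tau}$. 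Measuring the source in $L^r$, $r<2$, so as to keep $\|V(s)\|_{L^2}$ as the Gronwall unknown and place $\ku_V$ in $L^{r_1}$ with $\f{1}{r_1}=\f{1}{r}-\f{1}{2}$ forces $r\leq\f{2}{\al}$ (so that $r_1\leq\f{2}{\al-1}$ stays inside the range covered by Lemma \ref{lem5} under the stated smallness $\eps_0(\f{2}{\al-1})$), while the singularity $a(\tau-s)^{-\f{2}{\al r}}$ in \eqref{2.8} is integrable only for $r>\f{2}{\al}$; these requirements are incompatible, so Lemma \ref{gro} cannot be applied as you describe. (Your cross-term estimate also mismatches $\ku_V\Theta$ with $\nabla\tilde{\theta}$.) The paper instead improves the $L^2$ rate without ever estimating a quadratic term: it adds $C\|V\|_{L^2}^2$ to \eqref{98} and controls it by Gagliardo--Nirenberg, $\|V\|_{L^2}^2\leq D\|V\|_{L^{4/(2-\al)}}^{4/(2+\al)}\|V\|_{L^1}^{2\al/(2+\al)}$, hiding the first factor in the dissipation and feeding in the current $L^1$ bound; iterating this together with the $p=1$ step reaches $1-\f{3}{\al}$ in finitely many steps. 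Your treatment of $2<q<p$ agrees with the paper's \eqref{t:62} and is fine once the $L^2$ bound is in hand.
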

\noindent {\bf Remarks:} 
\begin{itemize}
	\item According to Proposition \ref{prop:15}, the conditions on $\tilde{\theta}$ are ensured by $f\in W^{1-\al, \f{4}{2+\al}}$ 
	and \\ $\|f\|_{\dot{W}^{-\al , \f{2}{\al-1}}}<<1$. 
	\item We point out again,  that we do not require smallness of $\|V_0\|_{L^1(\rtwo)\cap L^\infty(\rtwo)}$. 
\end{itemize}
\begin{proof}
	The proof is a bootstrap of the bounds \eqref{60} and \eqref{65}. In order to proceed with the steps, assume that we have the bound $\|V(\tau, \cdot)\|_{L^2}\leq C e^{s_n\tau}$, with $s_n<1-\f{3}{\al}$. Clearly, we start with \eqref{60}, which is $s_0=1-\f{2}{\al}$. We apply the energy estimate \eqref{70} to it, so we obtain 
	$\|V(\tau, \cdot)\|_{L^p}\leq C e^{s_n \tau}$ as well. 
	
	For $p>1$, take dot product of \eqref{VV1} with $|V|^{p-2} V$. Applying the same estimates as in the beginning of the proof of Lemma \ref{lem5}, we obtain 
	\begin{equation}
	\label{100}
	\f{1}{p} \partial_{\tau} \|V\|_{L^p}^p+ ( \f{2}{\al p}+\f{1}{\al }- 1) \|V\|_{L^p}^p \leq     \bigg| \int (\ku_V \cdot \nabla \Theta)  
	|V|^{p-2} V d\eta \bigg|. 
	\end{equation}	
		We estimate the right hand-side, for some large $q$ (to be determined momentarily),   by \\ 
		$C\|V(\tau, \cdot)\|_{L^{ p q}}^p \|\nabla \Theta\|_{L^{q'}}\leq 
		C e^{p s_n \tau} e^{\tau(1-\f{2}{q' \al})}$, since $\|\nabla \Theta\|_{L^{q'}}= e^{\tau(1-\f{2}{q' \al})}
		\|\nabla \tilde{\theta}\|_{L^{q'}}$. 	Plugging this estimate back in \eqref{100} yields 
		\begin{equation}
		\label{t:27} 
			\partial_{\tau} \|V\|_{L^p}^p+p(\f{2}{\al p}+\f{1}{\al }- 1) \|V\|_{L^p}^p\leq C 
			e^{\tau(p s_n  +1-\f{2}{q' \al})}.
		\end{equation}
	Choosing $p=1$  and $q=\f{4}{2+\al}$, so that $1-\f{2}{q'\al}=\f{1-\f{2}{\al}}{2}$, and resolving the differential inequality \eqref{t:27}, we obtain the bound 
	\begin{equation}
	\label{t:37} 
	\|V(\tau, \cdot)\|_{L^1}\leq C e^{\tau\max(1-\f{3}{\al}, 
		s_n+(\f{1}{2} - \f{1}{\al}))}.
	\end{equation}
 	In order to establish \eqref{115}, it remains to obtain the better estimate for $	\|V(\tau, \cdot)\|_{L^2}$. We proceed starting with  \eqref{98}, by adding $2C \|V\|_{L^2}^2$ for large $C$. We have  by the Gagliardo-Nirenberg's
 	\begin{eqnarray*}
 		\partial_{\tau} \|V\|^2_{L^2}+ 2(\f{2}{\al } - 1+C) \|V\|^2_{L^2}+\f{C}{2} \|V\|_{L^{\f{4}{2-\al}}}^2 
 		&\leq &  2C \|V\|^2_{L^2}\leq D \|V\|_{L^{\f{4}{2-\al}}}^{\f{4}{2+\al}}  \|V\|_{L^1}^{\f{2 \al}{2+\al}} \\
 		&\leq & 
 		\f{C}{2} \|V\|_{L^{\f{4}{2-\al}}}^2 + C_\al \|V\|_{L^1}^2.
 	\end{eqnarray*}
	Simplifying and using the bound \eqref{t:37}, leads to  
	\begin{equation}
	\label{116} 
	\partial_{\tau} \|V\|^2_{L^2}+ 2(\f{2}{\al } - 1+C) \|V\|^2_{L^2}\leq 
	C e^{2 \tau\max(1-\f{3}{\al}, s_n+(\f{1}{2} - \f{1}{\al}))}
	\end{equation}
	Resolving this last differential inequality, by making sure that $C>\f{1}{\al}$,  leads to 
		\begin{equation}
		\label{118} 
 \|V(\tau, \cdot)\|_{L^2}\leq C e^{\tau\max(1-\f{3}{\al}, 
 	s_n+(\f{1}{2} - \f{1}{\al}))}.
		\end{equation}
If $	s_n+(1-\f{2}{q' \al})\leq 1-\f{3}{\al}$, then we are done, as we have proved \eqref{115}. Otherwise, we have shown 
\begin{equation}
\label{119} 
\|V(\tau, \cdot)\|_{L^2}\leq C e^{s_{n+1}\tau},
\end{equation}
	where $s_{n+1}=s_n+ \left(\f{1}{2}-\f{1}{\al}\right)$, by the choice of $q$. Iterating the bounds $\|V(\tau, \cdot)\|_{L^2}\leq C e^{s_{n}\tau}$, whenever $s_n\leq 1-\f{3}{\al}$, with $s_{n+1}= s_n+ \left(\f{1}{2}-\f{1}{\al}\right)$ will lead to the bound \eqref{115} in finitely many steps. 
	
	Regarding the extension to \eqref{125}, we use the bound leading to \eqref{70}, which reads\footnote{note that its derivation relies on the fact that 
		$\|\nabla \Theta\|_{L^\f{2}{\al}}=\|\nabla \tilde{\theta}\|_{L^\f{2}{\al}} <\eps_0(p)$.}
	\begin{equation}
	\label{t:62} 
	\p_\tau \|V(\tau, \cdot)\|_{L^q}^q+q(C+\f{2}{\al q}+\f{1}{\al}-1) \|V(\tau, \cdot)\|_{L^q}^q\leq D_p \|V(\tau, \cdot)\|_{L^2}^q. 
	\end{equation}
	for all $2<q<p$. 
 Now, we just insert the bound \eqref{115} on the right hand side of \eqref{t:62} and we solve the resulting differential inequality
 $$
 \p_\tau \|V(\tau, \cdot)\|_{L^q}^q+q(C+\f{2}{\al q}+\f{1}{\al}-1) \|V(\tau, \cdot)\|_{L^q}^q\leq D_p e^{q(1-\f{3}{\al})\tau}. 
 $$
For a comfortably large $C$, which we can select at our will, this results in \eqref{125}. 
\end{proof}
As an obvious corollary, we have 
\begin{cor}
	\label{cor:12} 
	Let $p>2$ and $v_0\in L^1(\rtwo)\cap L^\infty(\rtwo)$, $f\in W^{1-\al, \f{4}{2+\al}}$.  Then, there exists $\eps_0=\eps_0(\al, p)$, so that whenever  $\tilde{\theta}: \|\nabla \tilde{\theta}\|_{L^{\f{2}{\al}}}<\eps_0$, we have the bounds 
\begin{equation}
\label{120} 
\|v(t, \cdot)\|_{L^p}\leq C(1+t)^{-\f{2}{\al} \left(1-\f{1}{p}\right)}. 
\end{equation}
for some constant $C=C(p, \al, \theta_0,f)$. 
\end{cor}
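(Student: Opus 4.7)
The plan is to deduce this corollary directly from Lemma \ref{lem7} by undoing the change of variables in \eqref{24}. Given $p>2$, I would fix an auxiliary exponent $p'>p$ and apply Lemma \ref{lem7} with that $p'$: this gives a threshold $\eps_0(p')$ (which I rename $\eps_0(\al,p)$) so that, under the smallness $\|\nabla\tilde\theta\|_{L^{2/\al}}<\eps_0(p')$, the bound \eqref{125} yields
\begin{equation*}
\|V(\tau,\cdot)\|_{L^p}\leq C_{p'}\, e^{\tau(1-\f{3}{\al})}\qquad\text{for all }\tau\geq 0,
\end{equation*}
since $2<p<p'$. The reason we cannot simply apply Lemma \ref{lem7} at the exponent $p$ itself is the strict inequality in \eqref{125}; this is the only place where the $p$-dependence of $\eps_0$ enters. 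I would also check that the hypotheses of Lemma \ref{lem7} are in force: $v_0\in L^1\cap L^\infty$ gives $V_0=v_0\in L^1\cap L^\infty$ (the transformation is the identity at $t=0$), while $f\in W^{1-\al,4/(2+\al)}$ gives $\nabla\tilde\theta\in L^{4/(2+\al)}$ via Proposition \ref{prop:15}.

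Next I would translate the bound on $V$ into one on $v$. Using the substitution $\eta=x/(1+t)^{1/\al}$ (so $dx=(1+t)^{2/\al}\,d\eta$) in the identity $v(t,x)=(1+t)^{-(1-\f{1}{\al})}V(\eta,\tau)$ from \eqref{24}, a direct computation yields
\begin{equation*}
\|v(t,\cdot)\|_{L^p(\rtwo)}=(1+t)^{\f{2}{\al p}-(1-\f{1}{\al})}\|V(\tau,\cdot)\|_{L^p(\rtwo)}.
\end{equation*}
Combining this with the exponential bound above and using $\tau=\ln(1+t)$, so that $e^{\tau(1-\f{3}{\al})}=(1+t)^{1-\f{3}{\al}}$, gives
\begin{equation*}
\|v(t,\cdot)\|_{L^p}\leq C_{p'}\,(1+t)^{\f{2}{\al p}-(1-\f{1}{\al})+(1-\f{3}{\al})}=C_{p'}\,(1+t)^{-\f{2}{\al}\left(1-\f{1}{p}\right)},
\end{equation*}
which is precisely \eqref{120}. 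The constant depends on $p,\al,\theta_0,f$ through the constants in Lemma \ref{lem7} and the norms $\|v_0\|_{L^1\cap L^\infty}$, $\|\nabla\tilde\theta\|_{L^{4/(2+\al)}}$.

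There is no substantive obstacle here: the corollary is an algebraic consequence of Lemma \ref{lem7} and the scaling identity, and the verification of the hypotheses is routine given Proposition \ref{prop:15}. The only small bookkeeping point to watch is the shift $p\mapsto p'>p$ needed because the quantitative estimate \eqref{125} is stated with a strict upper bound on $q$; this does not affect the final decay rate since the exponent in \eqref{120} is continuous in $p$.
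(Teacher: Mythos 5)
Your proposal is correct and is essentially the argument the paper intends: the paper presents Corollary \ref{cor:12} as an immediate consequence of Lemma \ref{lem7} (via the remark that $f\in W^{1-\al,\f{4}{2+\al}}$ and Proposition \ref{prop:15} supply its hypotheses), combined with the scaling identity from \eqref{24}, which is exactly your computation. Your extra care with the strict inequality $2<q<p$ in \eqref{125} (passing to an auxiliary $p'>p$) is a sensible reading of why $\eps_0$ depends on $p$ and does not change the substance.
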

\subsection{$L^\infty$ bounds} 
Our next task is to establish an exponential  decay for $\|\ku_V(\tau, \cdot)\|_{L^\infty}$, as our subsequent  arguments demand it. This is not so straightforward for at least two reasons - first, by the failure of the Riesz transform to act boundedly on $L^\infty$, we may not directly pass from $\|\ku_V\|_{L^\infty}$ to $\|V\|_{L^\infty}$, and secondly -  one does not 
have ready-to-use estimate for $\|V\|_{L^\infty}$, see Lemma \ref{lem5} above. Instead, we use the Sobolev embedding, along with the boundedness of the Riesz transforms on $W^{s,p}$ spaces as follows 
\begin{equation}
\label{87} 
\|\ku_V\|_{L^\infty}\leq C_{p,s} \|\ku_V\|_{W^{s,p}(\rtwo)}\leq C_{p,s} \|V\|_{W^{s,p}(\rtwo)}, 
\end{equation} 
as soon as $s>\f{2}{p}$. Incidentally, \eqref{87}also provides bounds for $\|V\|_{L^\infty}$,  as the same chain of inequalities  applies for it as well. Thus, our goal is to find bounds for $\|V(\tau, \cdot)\|_{W^{s,p}(\rtwo)}$. 
Unfortunately, such bounds, especially one with exponential decay in $\tau$ are not easy to come by.
 On the other hand, it suffice to find inefficient ones, which then can be used in a Gagliardo-Nirenberg's fashion, together with \eqref{65},  to produce the required exponential decay for appropriate $\|V\|_{W^{s,p}(\rtwo)}, s>\f{2}{p}$. To that end, it suffices  to estimate $\|\nabla \theta(t, \cdot)\|_{L^p}$. 
 \begin{lem}
 	\label{le:4}
 	Let  $\theta_0, \nabla \theta_0 \in L^1(\rtwo)\cap L^\infty(\rtwo)$, with 
 	$\|\nabla \tilde{\theta}\|_{L^{\f{2}{\al}}}<\eps_0(\f{3\al}{\al-1})$. Let also $2<p<\infty$ and $\nabla f \in L^{\f{2p}{2+\al p-\al}}$.   Then, there exists $A_{p, \al}$, so that 
 	\begin{equation}
 	\label{75} 
 	\|\nabla \theta(t, \cdot)\|_{L^p}\leq C_p(\theta_0)   (1+t)^{A_{p,\al}}. 
 	\end{equation} 
 \end{lem}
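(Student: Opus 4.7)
The plan is to run an $L^p$ energy estimate for $\nabla\theta$ that is allowed to grow polynomially in $t$. Apply $\nabla$ to \eqref{10}, test with $|\nabla\theta|^{p-2}\nabla\theta$, and integrate. The transport term $\int \ku_\theta\cdot\nabla(\nabla\theta)\,|\nabla\theta|^{p-2}\nabla\theta$ vanishes by $\div\ku_\theta=0$. The fractional dissipation contributes $C_p\|\nabla\theta\|_{L^{2p/(2-\al)}}^p$ via \eqref{a:34}, and H\"older--Young on the forcing side produces at most $\varepsilon \|\nabla\theta\|_{L^{2p/(2-\al)}}^p+C_\varepsilon\|\nabla f\|_{L^{2p/(2+\al p-\al)}}^p$, with the dual exponent matching the hypothesis of the lemma exactly.

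The core of the work is the stretching term $\mathcal{N}=\int \nabla\ku_\theta\cdot\nabla\theta\,|\nabla\theta|^{p-2}\nabla\theta$, which I would decompose via $\theta=v+\tilde{\theta}$, so that $\nabla\ku_\theta=\nabla\ku_v+\nabla\ku_{\tilde{\theta}}$. The contribution from $\nabla\ku_{\tilde{\theta}}$ is handled by the direct estimate
\[
|\mathcal{N}_{\tilde{\theta}}|\leq C\|\nabla\tilde{\theta}\|_{L^{2/\al}}\|\nabla\theta\|_{L^{2p/(2-\al)}}^p \leq C\varepsilon_0 \|\nabla\theta\|_{L^{2p/(2-\al)}}^p,
\]
and absorbed by dissipation thanks to the smallness assumption. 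For the remaining piece $\mathcal{N}_v$ one cannot appeal to a smallness of $\|\nabla v\|_{L^{2/\al}}$; instead I would use an intermediate H\"older split $\|\nabla\ku_v\|_{L^q}\times\|\nabla\theta\|_{L^{r}}^{p}$ with $q<\f{3\al}{\al-1}$, so that Lemma \ref{lem7} (whose applicability is guaranteed by the specific hypothesis $\|\nabla\tilde{\theta}\|_{L^{2/\al}}<\varepsilon_0(\f{3\al}{\al-1})$) delivers exponential bounds on $\|V(\tau)\|_{L^q}$, equivalently polynomial decay of $\|v(t)\|_{L^q}$ in the original variables via Corollary \ref{cor:12}. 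A Gagliardo--Nirenberg interpolation of $\|\nabla v\|_{L^q}$ between the decaying $\|v\|_{L^{q_0}}$ and the dissipative $\|\nabla\theta\|_{L^{2p/(2-\al)}}$, followed by Young's inequality, produces a bound of the form $|\mathcal{N}_v|\leq \varepsilon\|\nabla\theta\|_{L^{2p/(2-\al)}}^p + (\text{polynomial in }t)\cdot\|\nabla\theta\|_{L^p}^p$.

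Putting everything together and absorbing the $\varepsilon$-pieces on the left, one arrives at a differential inequality
\[
\p_\tau\|\nabla\theta\|_{L^p}^p\leq A(t)\|\nabla\theta\|_{L^p}^p + B(t),
\]
with $A(t), B(t)$ polynomially bounded in $t$, and Gronwall yields \eqref{75}. The principal technical obstacle is the exponent-bookkeeping needed to close the H\"older--Gagliardo--Nirenberg--Young chain: a naive H\"older split with $(2/\al,\,2p/(2-\al))$ produces a super-linear term $\|\nabla\theta\|_{L^p}^{p+\kappa}$ on the right-hand side, which would threaten finite-time blow-up in the ODE. Bootstrapping --- first establishing polynomial control for a low value of $p$ (say $p=2$, where the $L^2$ energy estimate closes trivially using only the maximum-principle bound \eqref{60} on $\|v\|_{L^2}$) and then iterating upward through the range of exponents accessible via Lemma \ref{lem7} --- is what ultimately makes the scheme close and fixes the numerical value of the growth rate $A_{p,\al}$ in \eqref{75}.
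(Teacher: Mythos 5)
Your skeleton (differentiate \eqref{10}, test with $|\nabla\theta|^{p-2}\nabla\theta$, use the coercivity \eqref{a:34}, Young's inequality on the forcing with the dual exponent $\f{2p}{2+\al p-\al}$, bootstrap upward from a low exponent) matches the paper's, but two steps in your treatment of the stretching term do not survive scrutiny. First, the interpolation you propose for the $v$-part --- bounding $\|\nabla v\|_{L^q}$ by a product of the decaying zeroth-order norm $\|v\|_{L^{q_0}}$ and the first-order dissipative norm $\|\nabla\theta\|_{L^{2p/(2-\al)}}$ --- is not a valid Gagliardo--Nirenberg inequality: estimating one derivative by interpolating between zero derivatives and one derivative forces the full interpolation weight onto the first-order factor, so nothing can be gained from the decay of $\|v\|_{L^{q_0}}$. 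To exploit a zeroth-order norm one needs strictly more than one derivative at the other end. This is exactly why the paper, in its preliminary $p=2$ step, keeps the dissipation in the stronger form $\|\La^{1+\f{\al}{2}}\theta\|_{L^2}^2$ and interpolates $\|\nabla\theta\|_{L^3}$ between $\|\La^{1+\f{\al}{2}}\theta\|_{L^2}$ and $\|\theta\|_{L^{\f{3\al}{\al-1}}}$ (this is also where the hypothesis $\eps_0(\f{3\al}{\al-1})$ is consumed, via the decay bound \eqref{90}; your remark that the base case closes ``using only the $L^2$ bound \eqref{60}'' is therefore not accurate). For general $p$ the dissipative quantity $\|\La^{\f{\al}{2}}(|\p\theta|^{\f{p}{2}-1}\p\theta)\|_{L^2}$ is no longer a Sobolev norm of $\theta$, so that route is unavailable; the paper instead bounds the entire stretching term (with no $v$ versus $\tilde\theta$ split) by $\|\nabla\theta\|_{L^{p+1}}^{p+1}$ and interpolates purely in the Lebesgue exponent between $\|\nabla\theta\|_{L^{2p/(2-\al)}}$ and the already-established $\|\nabla\theta\|_{L^2}\le C(1+t)^{1/2}$, where there is no derivative mismatch.

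Second, and independently of how the stretching term is estimated, your target differential inequality $\p_t\|\nabla\theta\|_{L^p}^p\le A(t)\|\nabla\theta\|_{L^p}^p+B(t)$ with $A(t)$ merely ``polynomially bounded'' does not imply \eqref{75}: Gronwall produces a factor $\exp\left(\int_0^t A(s)\,ds\right)$, hence at best $e^{Ct}$ growth, not $(1+t)^{A_{p,\al}}$. To get a polynomial conclusion you must arrange that no multiple of $\|\nabla\theta\|_{L^p}^p$ survives on the right-hand side (or that $A\in L^1(0,\infty)$). The paper's Lebesgue interpolation achieves precisely this: after absorbing $\f{C_p}{2}\|\nabla\theta\|_{L^{2p/(2-\al)}}^p$ into the dissipation, what remains is a pure inhomogeneity $D_p\|\nabla f\|_{L^{\f{2p}{2+\al p-\al}}}^p+G_p(1+t)^{\cdots}$, which integrates directly to \eqref{75}.
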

 {\bf Note:} Here, the constant $A_{p, \al}$ is fairly large, which makes \eqref{75} pretty ineffective. We remind ourselves however  that this estimate is only very  preliminary and it will be bootstrapped later on.  
\begin{proof}
	We differentiate the original equation \eqref{10}, we use $\p$ to denote any of $\p_j, j=1,2$. 
\begin{equation}
\label{93} 
	\p_t \p\theta+ \La^\al \p\theta+ \p \ku_\theta \cdot \nabla \theta+ \ku_\theta \cdot \nabla \p \theta= \p f. 
\end{equation}
Our first step is to control $\|\nabla \theta\|_{L^2}$. To this end, take dot product with $\p \theta$. After adding  in $j=1,2$ provides the bound 
	\begin{equation}
	\label{77} 
	\p_t \|\nabla \theta\|_{L^2}^2 + C \|\La^{1+\f{\al}{2}} \theta\|_{L^2}^2 \leq B  \|\La^{1+\f{\al}{2}} \theta\|_{L^2} 
	\|\La^{1-\f{\al}{2}}  f\|_{L^2} + C\|\nabla \theta\|_{L^3}^3. 
	\end{equation} 
	Clearly, $B  \|\La^{1+\f{\al}{2}} \theta\|_{L^2} 
	\|\La^{1-\f{\al}{2}}  f\|_{L^2} \leq \f{C_p}{2}\|\La^{1+\f{\al}{2}} \theta\|_{L^2}^2+D_p \|\La^{1-\f{\al}{2}}  f\|_{L^2}^2$. 	
	Furthermore, the Gagliardo-Nirenberg's  and Young's inequalities imply 
	$$
	\|\nabla \theta\|_{L^3}^3\leq C 	\|\La \theta\|_{L^3}^3\leq C  \|\La^{1+\f{\al}{2}} \theta\|_{L^2}^{\f{6}{2+\al}} \|\theta\|_{L^{\f{3\al}{\al-1}}}^{\f{3\al}{2+\al}}\leq \f{C_p}{2}\|\La^{1+\f{\al}{2}} \theta\|_{L^2}^2+ B_p\|\theta\|_{L^{\f{3\al}{\al-1}}}^{\f{3\al}{\al-1}}. 
	$$
	Putting it all together implies 
\begin{equation}
\label{95} 
	\p_t \|\nabla \theta\|_{L^2}^2\leq B_p\|\theta\|_{L^{\f{3\al}{\al-1}}}^{\f{3\al}{\al-1}}.
\end{equation}
	Keeping in mind that $\theta=\tilde{\theta}+v$ and the decay bound\footnote{which applies since $\tilde{\theta}$ is small enough as in the Corollary \ref{cor:1}}   \eqref{90} for $v$, we conclude 
	$\|\theta(t, \cdot)\|_{L^{\f{3\al}{\al-1}}}^{\f{3\al}{\al-1}}\leq C$ and so, \eqref{95} implies, after integration in time,  
	$\|\nabla \theta(t, \cdot)\|_{L^2}\leq C (1+t)^{1/2}$.  	This estimate serves as
	 a preliminary step towards  controlling 
  $	\|\nabla \theta(t, \cdot)\|_{L^p}$. 
	
	We now proceed to estimate $	\|\nabla \theta(t, \cdot)\|_{L^p}$. 
	Taking dot product of \eqref{93}  with $|\p \theta|^{p-2} \p \theta$ and adding in $j=1,2$, we obtain, in a manner similar to the energy estimate above 
\begin{equation}
\label{80} 
  \p_t \|\nabla \theta\|_{L^p}^p + C_p \|\nabla \theta\|_{L^\f{2p}{2-\al}}^p\leq B_p \|\nabla \theta\|_{L^\f{2p}{2-\al}}^{p-1} 
 	\|\nabla f\|_{L^{\f{2p}{2+\al p-\al}}} + C\|\nabla \theta\|_{L^{p+1}}^{p+1} 
\end{equation}
By the Young's inequality $B_p\|\nabla \theta\|_{L^\f{2p}{2-\al}}^{p-1} 
\|\nabla f\|_{L^{\f{2p}{2+\al p-\al}}} \leq \f{C_p}{2}  \|\nabla \theta\|_{L^\f{2p}{2-\al}}^p+ D_p	\|\nabla f\|_{L^{\f{2p}{2+\al p-\al}}} ^p$, so that the term $\f{C_p}{2}  \|\nabla \theta\|_{L^\f{2p}{2-\al}}^p$ is subsumed on the left-hand side. Furthermore, by a Gagliardo-Nirenberg's, with $\si=\f{p(p-1)}{(p+1)(p-2+\al)}$, 
\begin{eqnarray*}
	\|\nabla \theta\|_{L^{p+1}}^{p+1} \leq \|\nabla \theta\|_{L^{\f{2p}{2-\al}}}^{(p+1)\si}  \|\nabla \theta\|_{L^2}^{(p+1)(1-\si)}\leq \f{C_p}{2} \|\nabla \theta\|_{L^{\f{2p}{2-\al}}}^{p}+E_p \|\nabla \theta\|_{L^2}^{\f{(p+1)p(1-\si)}{p-(p+1)\si}}
\end{eqnarray*}
	All in all, taking into account that $p>\si(p+1)$, we obtain 
	$$
	\p_t \|\nabla \theta\|_{L^p}^p\leq D_p	\|\nabla f\|_{L^{\f{2p}{2+\al p-\al}}} ^p+E_p \|\nabla \theta\|_{L^2}^{\f{(p+1)p(1-\si)}{p-(p+1)\si}}\leq D_p	\|\nabla f\|_{L^{\f{2p}{2+\al p-\al}}}^p+ G_p (1+t)^{\f{(p+1)p(1-\si)}{2(p-(p+1)\si)}}
	$$
	Integrating the last inequality in time yields the bound $\|\nabla \theta(t, \cdot)\|_{L^p}\leq C_p(1+t)^{A_{p, \al}}$, with 
$$
A_{p,\al}= \frac{1}{p}+\f{(p+1)(1-\si)}{2(p-(p+1)\si)},
$$
 which is \eqref{75}. Note that for large $p>>1$, we have that $1-\si=O(p^{-1})$, while  $(p-(p+1)\si)=O(1)$. All in all, for $p>>1$, $A_{p, \al}=A_\al+O(p^{-1})$ for some  $A_\al>0$. 
\end{proof}
Note that since $\theta=\tilde{\theta}+v$, we have from \eqref{75} (and under the assumptions of Lemma \ref{le:4})  that 
$$
\|\nabla v(t, \cdot)\|_{L^p}\leq \|\nabla \theta(t, \cdot)\|_{L^p}+\|\nabla \tilde{\theta}\|_{L^p}\leq C(1+t)^{A_{p, \al}}. 
$$
Translating  via \eqref{26}, we obtain, $\|\La  V(\tau, \cdot)\|_{L^p}\sim \|\nabla V(\tau, \cdot)\|_{L^p}\leq C e^{\tau\left(1+A_{p, \al} -\f{1}{\al}-\f{2}{\al p}\right)}$. Using the \\ 
Gagliardo-Nirenberg's inequality and the estimate \eqref{125}, we obtain 
$$
\|\La^s V(\tau, \cdot)\|_{L^p}\leq \|\La  V(\tau, \cdot)\|_{L^p}^s \| V(\tau, \cdot)\|_{L^p}^{1-s} \leq C_p 
e^{\tau\left[(1-\f{3}{\al})+s(\f{2}{\al}+A_{p, \al} -\f{2}{\al p})\right]}. 
$$
The point here is that choosing $s>\f{2}{p}$, say $s=\f{3}{p}$ (so that $\|\La^s V(\tau, \cdot)\|_{L^p}$ controls $\|V\|_{L^\infty}$),  and for sufficiently large $p>p_\al$ (recall $A_{p, \al}=A_\al+O(p^{-1})$),   we can ensure that  the exponent above may be made as close as possible to $e^{(1-\f{3}{\al})\tau}$. We collect these findings  in the following corollary. 
\begin{cor}
	\label{cor:2} 
Let  the assumptions in Lemma \ref{lem7} and Lemma  \ref{le:4}   be satisfied. Then, for arbitrary $\de>0$,  
 there exists $C$ depending on 	$\al, \tilde{\theta}, v_0, \de, f$, so that 
\begin{equation}
\label{112} 
\|V(\tau, \cdot)\|_{L^\infty}+\|\ku_V(\tau, \cdot)\|_{L^\infty}\leq C e^{(1-\f{3}{\al}-\de)\tau}. 
\end{equation}
\end{cor}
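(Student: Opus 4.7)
The plan is to follow the outline sketched in the paragraphs immediately preceding the statement, combining the inefficient but explicit growth bound from Lemma \ref{le:4} with the sharp exponential decay for $\|V\|_{L^q}$ from Lemma \ref{lem7}, then interpolating via Gagliardo-Nirenberg. The starting observation is the Sobolev-type bound
\begin{equation*}
\|V\|_{L^\infty(\rtwo)}+\|\ku_V\|_{L^\infty(\rtwo)}\leq C_{s,p}\|V\|_{W^{s,p}(\rtwo)},\qquad s>\tfrac{2}{p},
\end{equation*}
which uses the boundedness of Riesz transforms on $W^{s,p}$. Thus it suffices to bound $\|\La^s V(\tau,\cdot)\|_{L^p}$ for some $s>2/p$ and some $p$ that we are free to choose.

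Next I would convert the original-variables bound of Lemma \ref{le:4} into the scaled variables. Since $\theta=\tilde{\theta}+v$ and $\nabla\tilde{\theta}\in L^p$ is static, the estimate \eqref{75} gives $\|\nabla v(t,\cdot)\|_{L^p}\leq C(1+t)^{A_{p,\al}}$, and the change of variables \eqref{26} then produces
\begin{equation*}
\|\nabla V(\tau,\cdot)\|_{L^p}\leq C\,e^{\tau\left(1+A_{p,\al}-\f{1}{\al}-\f{2}{\al p}\right)}.
\end{equation*}
This is a lossy bound, but it is coupled with the sharp decay \eqref{125} of Lemma \ref{lem7}, namely $\|V(\tau,\cdot)\|_{L^p}\leq C_p e^{(1-\f{3}{\al})\tau}$, valid once $\|\nabla\tilde{\theta}\|_{L^{2/\al}}<\eps_0(p)$.

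The key step is then Gagliardo-Nirenberg interpolation between these two controls:
\begin{equation*}
\|\La^{s} V(\tau,\cdot)\|_{L^p}\leq \|\La V(\tau,\cdot)\|_{L^p}^{s}\,\|V(\tau,\cdot)\|_{L^p}^{1-s}\leq C_p\,e^{\tau\left[(1-\f{3}{\al})+s\left(\f{2}{\al}+A_{p,\al}-\f{2}{\al p}\right)\right]}.
\end{equation*}
Picking $s=3/p$ meets the embedding requirement $s>2/p$, so the right-hand side controls $\|V\|_{L^\infty}+\|\ku_V\|_{L^\infty}$. Since $A_{p,\al}=A_\al+O(p^{-1})$ as $p\to\infty$ by Lemma \ref{le:4}, the extra exponent $s(\tfrac{2}{\al}+A_{p,\al}-\tfrac{2}{\al p})=\tfrac{3}{p}(A_\al+\tfrac{2}{\al})+O(p^{-2})$ is $O(p^{-1})$, which can be made smaller than any prescribed $\de>0$ by taking $p=p(\de)$ sufficiently large.

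The only real obstacle is bookkeeping in the choice of parameters: given $\de>0$, one first selects $p=p(\de)$ large enough that $\tfrac{3}{p}(A_\al+\tfrac{2}{\al})<\de$, then invokes Lemma \ref{lem7} and Lemma \ref{le:4} with this $p$, which in turn dictates the smallness threshold $\eps_0(p)$ required on $\|\nabla\tilde{\theta}\|_{L^{2/\al}}$. Under the standing hypotheses this threshold is available, and the constant $C$ picks up the dependence on $p(\de)$ (hence on $\de$), on the data $v_0$, $\tilde{\theta}$, $f$, and on $\al$, yielding \eqref{112} as stated.
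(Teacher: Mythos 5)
Your argument is correct and is essentially identical to the paper's own proof: the Sobolev/Riesz-transform reduction to $\|V\|_{W^{s,p}}$ via \eqref{87}, the transfer of Lemma \ref{le:4} to scaled variables, the Gagliardo--Nirenberg interpolation against \eqref{125}, and the choice $s=3/p$ with $p=p(\de)$ large so that the extra exponent $\f{3}{p}(A_\al+\f{2}{\al})+O(p^{-2})$ is below $\de$. (Note that, exactly as in the paper's own derivation, what this argument actually yields is $Ce^{(1-\f{3}{\al}+\de)\tau}$ rather than the $-\de$ written in \eqref{112}; the sign in the displayed statement appears to be a slip in the paper, not a defect of your proof.)
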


\section{A posteriori estimates in $L^2(m)$ spaces} 
\label{sec:6} 
In this section, we establish an asymptotic decomposition  for $V$, which shows that its  main term   
 of $V$ in $L^2(m), 1<m<3-\al$  consists of a simple function of the form $e^{(1-\f{3}{\al})\tau} G$, while the rest of it has faster decay. This is our general plan. However, we follow the scheme outlined in the modified Gronwal's result, Lemma \ref{gro}, which will be applied to  estimate of the remainder term. As one can see from there, we need an {\it a priori} estimate to jump start the process.
  \subsection{A priori estimate in $L^2(2)$} 
 We have already seen in Lemma \ref{lem7} and Corollary \ref{cor:1},  that equation  \eqref{VV1}, has global solutions in $L^p, 1<p<\infty$. Since our arguments in this section necessarily take place  in the  smaller space  $L^2(2)$, we first need to know  well-posedness as well as some \emph{a priori} estimates in this space. In fact,  even if the initial data is well-localized, say  $V(0, \cdot) \in L^2(2)$, it is not {\it a priori} clear why the solution $V(\tau)$ should  stay in $L^2(2)$ for (any) later time $\tau>0$. 
 
 	\begin{prop}
 		\label{thm1}
 	In addition to the standing assumptions about $f$ in Proposition \ref{t:10}, 
 	suppose that $1<m<3-\al$ and $V_0 \in L^\infty\cap L^2(m)(\rtwo)$. Assume that $\tilde{\theta}$ obeys the smallness assumption in Lemma \ref{lem7} and $\tilde{\theta}\in L^2(m)$.   Then \eqref{VV1} has an unique global strong solution 
 	$V \in C^0([0, \infty]; L^2(m))$,  with $V(0)= V_0$. 
 	In addition,  there is the {\it a priori} estimate for each $\de>0$, 
 	\begin{equation}
 	\label{200} 
 	\|V(\tau)\|_{L^2(m)} \leq    C_{\de}
 	e^{\max[m+1-\f{m+4}{\al}-\de, 1-\f{3}{\al}]\tau}. 
 	\end{equation} 
 	where $C$ depends on $\de, V_0, \al, \tilde{\theta}$. 
 \end{prop}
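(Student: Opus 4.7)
My plan is a weighted $L^2$ energy estimate for $\|V(\tau)\|_{L^2(m)}^2$, combined with the unweighted bound $\|V(\tau)\|_{L^2}\lesssim e^{(1-3/\al)\tau}$ of Lemma \ref{lem7}, the $L^\infty$ decay of $\ku_V$ from Corollary \ref{cor:2}, and the stationary estimate $\|\nabla\tilde{\theta}\|_{L^2(m)}<\infty$ of Proposition \ref{prop:20}, all processed through the modified Gronwall inequality of Lemma \ref{gro}. Local (and hence global) existence in $L^2(m)$ comes first by a Duhamel fixed-point argument built off the semigroup bounds \eqref{16}--\eqref{32} on $L^2(m)$; by uniqueness, this solution coincides with the $L^2\cap L^\infty$ one of Lemma \ref{lem7}, so existence extends to $[0,\infty)$ for free.

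For the a priori bound I pair \eqref{VV1} with $V(1+|\eta|^2)^m$ in $L^2$ and write $w=(1+|\eta|^2)^{m/2}$. Integration by parts on the drift piece of $\cl$ yields
\[
\bigl\langle(1-\tfrac{1}{\al})V+\tfrac{1}{\al}\eta\cdot\nabla V,\,V\bigr\rangle_{L^2(m)}=\bigl(1-\tfrac{m+2}{\al}\bigr)\|V\|_{L^2(m)}^2+\tfrac{m}{\al}\|V\|_{L^2(m-1)}^2,
\]
and the bad remainder is split, for $R$ large, by $\|V\|_{L^2(m-1)}^2\leq(1+R^2)^{m-1}\|V\|_{L^2}^2+R^{-2}\|V\|_{L^2(m)}^2$, giving an absorbable $\epsilon\|V\|_{L^2(m)}^2$ plus a source term $\lesssim e^{2(1-3/\al)\tau}$. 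The dissipative term is treated by conjugation $g=wV$,
\[
\langle\La^\al V,V\rangle_{L^2(m)}=\|\La^{\al/2}g\|_{L^2}^2-\langle[\La^\al,w]\,w^{-1}g,\,g\rangle,
\]
where the commutator, which is outside the immediate scope of Lemma \ref{L_-50} since $\al>1$, is handled via the factorization $\La^\al=\La\cdot\La^{\al-1}$, reducing to $s=\al-1\in(0,1)$ in the lemma applied to the polynomial-type weight $w$. The net outcome is a coercivity reservoir $c_0\|\La^{\al/2}(wV)\|_{L^2}^2$ modulo a benign $C\|V\|_{L^2(m)}^2$.

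The three nonlinear terms exploit $\div\ku_V=\div\ku_\Theta=0$ to move the derivative onto $w^2$. The diagonal piece becomes $\tfrac12\int V^2\,\ku_V\cdot\nabla w^2\,d\eta\lesssim\|\ku_V\|_{L^\infty}\|V\|_{L^2(m)}^2$, carrying the harmless multiplier $e^{(1-3/\al-\delta)\tau}$ from Corollary \ref{cor:2}. The $\ku_\Theta\cdot\nabla V$ contribution becomes $\tfrac12\int V^2\,\ku_\Theta\cdot\nabla w^2\,d\eta$ and is Höldered against the scale-invariant norm $\|\ku_\Theta\|_{L^{2/(\al-1)}}=\|\ku_{\tilde\theta}\|_{L^{2/(\al-1)}}\lesssim\eps_0$ (time-independent and small by Proposition \ref{prop:12}), paired with the Sobolev embedding $\dot{H}^{\al/2}\hookrightarrow L^{4/(2-\al)}$ applied to $wV$, so the ensuing small constant is absorbed into $c_0\|\La^{\al/2}(wV)\|_{L^2}^2$. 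Finally $\ku_V\cdot\nabla\Theta$ is bounded by $\|\ku_V\|_{L^\infty}\|\nabla\Theta\|_{L^2(m)}\|V\|_{L^2(m)}$, and a direct rescaling from \eqref{24} combined with Proposition \ref{prop:20} gives $\|\nabla\Theta(\tau)\|_{L^2(m)}\lesssim e^{(1-1/\al)\tau}$, yielding a drift of size $Ce^{(2-4/\al-\delta)\tau}\|V\|_{L^2(m)}$.

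Assembling everything produces a differential inequality of exactly the form handled by Lemma \ref{gro}, with linear rate $2\bigl(1-\tfrac{m+2}{\al}+\epsilon\bigr)$, drift $Ce^{(2-4/\al-\delta)\tau}\|V\|_{L^2(m)}$, and forcing source $Ce^{2(1-3/\al)\tau}$. Resolving through Lemma \ref{gro} and then letting $\epsilon\downarrow0$ delivers the exponent $\max\bigl[m+1-\tfrac{m+4}{\al}-\delta,\,1-\tfrac{3}{\al}\bigr]$: the first slot comes from the $\ku_V\cdot\nabla\Theta$ drift feeding into the linear $L^2(m)$ rate, and the second from the $\|V\|_{L^2}^2$-source pumping into the integrating factor. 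The main technical obstacle is the weighted commutator estimate for $[\La^\al,w]$ when $\al>1$, forcing the $\La\cdot\La^{\al-1}$ factorization above; secondarily, careful bookkeeping is needed to absorb the $\ku_\Theta\cdot\nabla V$ term into the dissipation (which is what sharpens the naive $2-4/\al$ rate down to $m+1-(m+4)/\al$), and to verify that the two slots in the $\max$ are optimal in the respective regimes $m\lessgtr 1/(\al-1)$.
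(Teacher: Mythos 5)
Your overall architecture coincides with the paper's: a weighted energy identity for $J(\tau)=\|V(\tau)\|_{L^2(m)}^2$, fed by the unweighted bound of Lemma \ref{lem7}, the $L^\infty$ decay of $\ku_V$ from Corollary \ref{cor:2}, the weighted stationary estimate of Proposition \ref{prop:20}, and a Gronwall closure. Your handling of the transport terms is genuinely different and in places cleaner: the paper integrates by parts to reach $\int|\eta|^{2m-1}|\ku_Q|V^2\,d\eta$, estimates it by $\|\ku_Q\|_{L^{2m}}\,J^{\f{2m-1}{2m}}\|V\|_{L^\infty}^{1/m}$, and must then choose a time-dependent Young parameter $\ka(\tau)$ to compensate the growth of $\|\Theta(\tau)\|_{L^{2m}}$ --- which is exactly where the exponent $2m+2-\f{2m+8}{\al}$ in \eqref{637}, hence the first slot of the max in \eqref{200}, is produced. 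Your use of the time-independent, small, scale-invariant norm $\|\ku_\Theta\|_{L^{2/(\al-1)}}$ with absorption into the $\dot H^{\al/2}$ reservoir avoids that growing factor entirely and would, if executed, give the clean rate $e^{(1-\f{3}{\al})\tau}$; note that your attribution of the first slot to the $\ku_V\cdot\nabla\Theta$ drift is not how the paper obtains it, and that since $m<3-\al\leq \f{1}{\al-1}$ the second slot dominates the max in any case, so your conclusion is consistent with \eqref{200}.

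The concrete gap is the dissipative term. Your identity $\dpr{\La^{\al}V}{V}_{L^2(m)}=\|\La^{\al/2}g\|_{L^2}^2-\dpr{[\La^{\al},w]w^{-1}g}{g}$ leaves you with the commutator $[\La^{\al},w]$ at order $\al>1$, and the proposed factorization $\La^{\al}=\La\cdot\La^{\al-1}$ does not repair this: it yields $\La[\La^{\al-1},w]+[\La,w]\La^{\al-1}$, where the outer $\La$ on the first piece is not controlled by Lemma \ref{L_-50} (which bounds the commutator itself, not a derivative of it), and the second piece sits at $s=1$, outside the lemma's range $0<s<1$. The correct move --- and the one the paper makes --- is never to form $[\La^{\al},w]$ at all: write $\dpr{|\eta|^{m}\La^{\al}V}{|\eta|^{m}V}$ and move one factor $\La^{\al/2}$ across the weight on each side, so that only $[\La^{\al/2},|\eta|^{m}]$ with $s=\f{\al}{2}\in(\f12,1)$ ever appears, squarely within Lemma \ref{L_-50}. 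Relatedly, your phrase ``modulo a benign $C\|V\|_{L^2(m)}^2$'' conceals a real issue: a large constant multiplying $J$ on the right-hand side would destroy the linear rate $\f{m+2}{\al}-1$ in the Gronwall step. The commutator remainders must be arranged as $\eps J+\eps\||\eta|^{m}\La^{\al/2}V\|_{L^2}^2+C_\eps\|\La^{\al/2}V\|_{L^2}^2$, with the large constant landing on the \emph{unweighted} dissipation, which is then absorbed by adding a large multiple $M$ of the unweighted energy inequality \eqref{1118}; this is precisely the role of the parameter $M$ in the paper's choice $J(\tau)=\int(M+|\eta|^{2m})V^2\,d\eta$. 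With these two repairs your plan closes.
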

 {\bf Remark:} The estimate in \eqref{200}, while not very inefficient serves only as a preliminary bound, which we feed into the generalized Gronwall's lemma, Lemma \ref{gron}. This eventually  helps us establish the sharp bounds, see Proposition \ref{lem_00} below. 
 \begin{proof}
 We need  control the quantity  $J(\tau):=\int_{\rtwo} (M+|\eta|^{2m})  |V(\tau, \eta)|^2 d\eta$, where $M$ will be selected sufficently large, for technical reasons. For the $L^2$ portion of the quantity, we use the energy inequality \eqref{116} established in Lemma \ref{lem7}, where we note that we can add $\|\La^{\f{\al}{2}} V\|_{L^2}^2$ on the left-hand side. We record it as follows - for any $C>0$, there is a $c_0>0$ and $C_1=C_1(C)$, so that 
 	\begin{equation}
 	\label{1118} 
 	 \partial_{\tau} \|V\|^2_{L^2}+ 2(\f{2}{\al } - 1+C) \|V\|^2_{L^2}+ \|\La^{\f{\al}{2}} V\|_{L^2}^2 
 	 \leq C_1 e^{2 \tau(1-\f{3}{\al})}. 
 	\end{equation}

 To this end, we find the inner product of equation \eqref{VV1} with $|\eta|^{2m} V$. Thus
 	\begin{eqnarray*}
 		& & \f{1}{2} \p_\tau  \int  |\eta|^{2m} V^2  d \eta+ \dpr{|\eta|^{2m} \La^{\al} V}{V} = 
 		(1-\f{1}{\al}) \int |\eta|^{2m} V^2 d \eta+ \f{1}{\al} \int (\eta \cdot \nabla_{\eta} V) |\eta|^{2m} V\ d \eta\\
 		&&- \int (\ku_V \cdot \nabla_{\eta} V) |\eta|^{2m} V\ d \eta-  \int (\ku_{\Theta} \cdot \nabla_{\eta} V) |\eta|^{2m} V\ d \eta-  \int (\ku_V \cdot \nabla_{\eta} \Theta) |\eta|^{2m} V\ d \eta.
 	\end{eqnarray*}
 	We first analyze the terms on the right hand-side. For the term $\int (\ku_V \cdot \nabla_{\eta} \Theta) |\eta|^{2m} V\ d \eta$, we use H\"older's, \eqref{112} and 	 
 	$\||\eta|^{m} \nabla_\eta \Theta\|_{L^2} =e^{(1-\f{m+1}{\al})\tau} \||x|^m \nabla_x \tilde{\theta}\|_{L^2}$,   
 	to conclude that for every $\eps>0$, there is $C_\eps$, 
 	\begin{eqnarray*}
 		|\int (\ku_V \cdot \nabla_{\eta} \Theta) |\eta|^{2m} V\ d \eta| &\leq &  \|\ku_V\|_{L^\infty} \||\eta|^m \nabla_\eta \Theta\|_{L^2} \||\eta|^m V\|_{L^2}\leq C e^{(2-\f{m+4}{\al}-\de)\tau} \sqrt{J(\tau)} \leq \\
 		&\leq & \eps J(\tau)+C_\eps e^{(4-\f{2m+8}{\al}-\de)\tau} 
 	\end{eqnarray*}
 	Next, integration by parts yields 
 	$$
 	\f{1}{\al} \int (\eta \cdot \nabla_{\eta} V) |\eta|^{2m} V d \eta  =   - \f{m+1}{\al} \int |\eta|^{2m} V^2 d \eta= - \f{m+1}{\al} J(\tau).
 	$$
 	For the remaining two terms on the right-hand side of  the energy estimate, we use the divergence free property of $U_V$ and $U_{\Theta}$, as well as integration by parts, and get 
 	\begin{eqnarray*}
 		\int (\ku_V \cdot \nabla_{\eta} V) |\eta|^{2m} V\ d \eta &=&  -m \int |\eta|^{2m-2} (\eta \cdot \ku_V)  V^2 d \eta,\\
 	 	\int (\ku_{\Theta} \cdot \nabla_{\eta} V) |\eta|^{4} V\ d \eta &=&  -m \int |\eta|^{2m-2} (\eta \cdot \ku_{\Theta})  V^2 d \eta.
 	\end{eqnarray*}
 	In the last two  expressions, we need to control quantities in the form 
 	 $\int |\eta|^{2m-1}  |\ku_Q|  V^2(\eta) d \eta$, where $Q$ is either $V$ or $\Theta$. We estimate by H\"older's and Young's inequalities,  for each $\ka>0$, 
 	 \begin{eqnarray*}
 	 \int |\eta|^{2m-1}  |\ku_Q|  V^2(\eta) d \eta &\leq &  C \|\ku_Q\|_{L^{2m}} \left(\int |\eta|^{2m} V^{2} d\eta\right)^{\f{2m-1}{2m}} \|V\|_{L^\infty}^{\f{1}{m}} \leq \\
 	 &\leq &    C  \|Q\|_{L^{2m}} (\ka J(\tau) +C \ka^{-(2m-1)}  \|V\|_{L^{\infty}}^2).
 	 \end{eqnarray*}
 	 Applying this to $Q=V$ and then to $Q=\Theta$ leads to an estimate of the right hand side of the energy estimate as follows 
 	 \begin{equation}
 	 \label{625} 
 	 C (\ka J(\tau) +\ka^{-(2m-1)}  \|V\|_{L^{\infty}}^2) (\|V\|_{L^{2m}}+\|\Theta(\tau)\|_{L^{2m}}).
 	 \end{equation}
 	 On the other hand, by Lemma \ref{lem7}, $\|V\|_{L^{2m}}\leq C e^{(1-\f{3}{\al})\tau}$ and by  \eqref{112}, 
 	 $\|V\|_{L^\infty}\leq C_\de e^{(1-\f{3}{\al}-\de)\tau}$, 
 	  while the estimate for $\Theta$ is much less favorable,   
 	  $\|\Theta(\tau)\|_{L^{2m}}\leq C e^{(1- \f{1}{\al}\left(1+\f{1}{m}\right))\tau}$, according to \eqref{40}. Note that the exponent $e^{(1-\f{1}{\al}\left(1+\f{1}{m}\right))\tau}$ grows, unless $\al<1+\f{1}{m}$. 
 	  
 	  Adding the estimates for $\p_\tau \int |\eta|^{2m} V^2 d\eta$ and the estimate\footnote{(which we 
 	  	multiply by a large constant $M$ and we take $C$ large so that $\f{2}{\al}-1+C>\f{m+2}{\al}-1$} \eqref{1118}  yields 
 	 \begin{eqnarray*}
 	 & &  \f{1}{2} J'(\tau) + \left(\f{m+2}{\al}-1\right) J(\tau)+ M\|\La^{\f{\al}{2}} V\|_{L^2}^2+ 
 	  \dpr{|\eta|^{2m} \La^{\al} V}{V} \leq \\
 	  &\leq & C e^{(1-\f{1}{\al}\left(1+\f{1}{m}\right))\tau}\ka  J(\tau)  + 
 	  C_\de \ka^{-(2m-1)} e^{\left( 3-\f{7+\f{1}{m}}{\al}-\de\right)\tau}+M e^{2\left(1-\f{3}{\al}\right)\tau} 
 	 \end{eqnarray*} 
 	  which is valid for all $\de>0, \ka>0$. 
 	  
 	 Now, we are free to select $\ka$.  We do it so that we can allow ourselves to hide the term containing $J(\tau)$, that is for an arbitrary $\eps$, choose 
 	 $\ka:=\eps e^{-(1-\f{1}{\al}\left(1+\f{1}{m}\right))\tau}$. This brings about the following estimate for $J$, 
 	  \begin{equation}
 	  \label{635}
 	  \f{1}{2} J'(\tau) + \left(\f{m+2}{\al} - 1- \eps\right) J(\tau)+ \dpr{|\eta|^{2m} \La^{\al} V}{V} \leq C_{\de, \eps}  
 	  e^{ \left(2m+2-\f{2m+8}{\al}-\de\right)\tau}+M e^{2\left(1-\f{3}{\al}\right)\tau} .
 	  \end{equation}
 	  It remains to estimate the term  $\dpr{|\eta|^{2m} \La^{\al} V}{V}= 
 	  \dpr{|\eta|^{m} \La^{\al} V}{|\eta|^{m} V}$. Note that this introduces commutators in our estimates as follows 
 	  \begin{eqnarray*}
 	  \dpr{|\eta|^{m} \La^{\al} V}{|\eta|^{m} V}&=&  \dpr{\La^{\f{\al}{2}} |\eta|^{m} \La^{\f{\al}{2}} V}{|\eta|^{m} V} - \dpr{[\La^{\f{\al}{2}},|\eta|^{m}] \La^{\f{\al}{2}} V}{|\eta|^{m} V}.
 	  \end{eqnarray*}
 	  But 
 	  \begin{eqnarray*}
 	   \dpr{\La^{\f{\al}{2}} |\eta|^{m} \La^{\f{\al}{2}} V}{|\eta|^{m} V} &=&  
 	   \dpr{|\eta|^{m} \La^{\f{\al}{2}} V}{\La^{\f{\al}{2}} |\eta|^{m} V}= \dpr{|\eta|^{m} \La^{\f{\al}{2}} V}{|\eta|^{m} \La^{\f{\al}{2}}  V}+\\
 	   &+&  \dpr{|\eta|^{m} \La^{\f{\al}{2}} V}{[\La^{\f{\al}{2}},|\eta|^{m}] V} =  \||\eta|^{m} \La^{\f{\al}{2}} V\|_{L^2}^2 + \dpr{|\eta|^{m} \La^{\f{\al}{2}} V}{[\La^{\f{\al}{2}},|\eta|^{m}] V}.
 \end{eqnarray*}
 Since, by Lemma \ref{L_-50} and Gagliardo-Nirenberg's 
   \begin{eqnarray*}
 	& &   |\dpr{[\La^{\f{\al}{2}},|\eta|^{m}] \La^{\f{\al}{2}} V}{|\eta|^{m} V}| \leq   \|[\La^{\f{\al}{2}},|\eta|^{m}] \La^{\f{\al}{2}} V\|_{L^2} \||\eta|^{m} V\|_{L^2}\\
 	&\leq & C \||\eta|^{m-\f{\al}{2}} \La^{\f{\al}{2}} V\|_{L^2}\|_{L^2} \||\eta|^{m} V\|_{L^2} \\
 	  &\leq & C \sqrt{J(\tau)} \||\eta|^{m} \La^{\f{\al}{2}} V\|_{L^2}^{1-\f{\al}{2m}} \|\La^{\f{\al}{2}} V\|_{L^2}^{\f{\al}{2m}}\leq \eps J(\tau) + \eps \||\eta|^{m} \La^{\f{\al}{2}} V\|_{L^2}^2+C_\eps \|\La^{\f{\al}{2}} V\|_{L^2}^2 \\
 	  & & \dpr{|\eta|^{m} \La^{\f{\al}{2}} V}{[\La^{\f{\al}{2}},|\eta|^{m}] V}\leq \||\eta|^{m} \La^{\f{\al}{2}} V\|_{L^2} \|[\La^{\f{\al}{2}},|\eta|^{m}] V\|_{L^2} \leq   \||\eta|^{m} \La^{\f{\al}{2}} V\|_{L^2} \||\eta|^{m-\f{\al}{2}} V\|_{L^2} \\
 	  &\leq & \eps  \||\eta|^{m} \La^{\f{\al}{2}} V\|_{L^2}^2+ \eps J(\tau)+ C_\eps \|V\|_{L^2}^2.
 	 \end{eqnarray*}
 	  Collecting all the estimates for $\dpr{|\eta|^{2m} \La^{\al} V}{V}$ and using the bound \eqref{115}, yields 
 	  $$
 	  \dpr{|\eta|^{2m} \La^{\al} V}{V}\geq (1-2\eps)   \||\eta|^{m} \La^{\f{\al}{2}} V\|_{L^2}^2-2\eps J(\tau)- C_\eps \|\La^{\f{\al}{2}} V\|_{L^2}^2 - C e^{2\left(1-\f{3}{\al}\right)\tau}. 
 	  $$
 	 This means that for all $\eps>0$, we can derive the energy inequality from \eqref{635}, 
 	  $$
 	  \f{1}{2} J'(\tau) + \left(\f{m+2}{\al} - 1- 3 \eps\right) J(\tau) + (M-C_\eps) \|\La^{\f{\al}{2}} V\|_{L^2}^2 \leq C_{\de, \eps}  
 	  e^{ \left(2m+2-\f{2m+8}{\al}-\de\right)\tau}+M e^{2\left(1-\f{3}{\al}\right)\tau} .
 	 $$
 	 At this point, we make the selection $M=M_\eps=\max(C_\eps,1)$. So, we obtain 
 	 \begin{equation}
 	 \label{637}
 	 \f{1}{2} J'(\tau) + \left(\f{m+2}{\al} - 1- 3 \eps\right) J(\tau)   \leq C_{\de, \eps}  
 	 e^{ \left(2m+2-\f{2m+8}{\al}-\de\right)\tau}+M_\eps e^{2\left(1-\f{3}{\al}\right)\tau} .
 	 \end{equation}
 	  Using integrating  factors, we get the bound 
 	  $$
 	  J(\tau)\leq J(0) e^{2(1-\f{m+2}{\al}+3\eps)\tau}+ C_{\eps, \de}
 	  e^{\max[2m+2-\f{2m+8}{\al}-\de, 2(1-\f{3}{\al})]\tau}. 
 	  $$
 	  Thus, fixing sufficiently small $\eps$, we have that $ (1-\f{m+2}{\al}+3\eps)<1-\f{3}{\al}$, we arrive at the bound 
 	  $$
 	  \left(	\int (1+|\eta|^{2m}) V^2(\tau, \eta) d\eta \right)^{\f{1}{2}} \leq 
 	  C_{\de}
 	  e^{\max[m+1-\f{m+4}{\al}-\de, 1-\f{3}{\al}]\tau}. 
 	  $$
 	  as announced in \eqref{200}. 
 \end{proof}
 
\subsection{Estimate of the remainder}
We first introduce the remainder term. More precisely, we decompose the function $V(\eta, \tau)$ on the spectrum of the operator $\cl$,
\begin{eqnarray}\label{dec}
V= \al(\tau) G+ \widetilde{V},
\end{eqnarray}
where $\al(\tau)= \langle V, 1\rangle$ and $\widetilde{V}= \mathcal{Q}_0 V$. Then,
$$
\al_{\tau}(\tau) =  \langle V_{\tau}, 1\rangle =  \langle \cl V, 1\rangle - \langle U_V \cdot \nabla V, 1\rangle - \langle U_{\Theta} \cdot \nabla V, 1\rangle- \langle U_V \cdot \nabla \Theta, 1\rangle =  (1- \f{3}{\al}) \al(\tau),
$$
since $\cl^*[1]=(1-\f{3}{\al})$. 
This ordinary differential equation for $\al(\tau)$ has the solution $\al(\tau)= \al(0) e^{(1- \f{3}{\al}) \tau}$, where 
\begin{equation}
\label{320} 
\al(0)=\int_{\rtwo} V(\eta) d\eta= \int_{\rtwo} (\theta_0(x)-\tilde{\theta}(x)) dx. 
\end{equation}

We also project the equation \eqref{VV1} on the essential spectrum of the operator $\cl$, i.e 
\begin{equation}
\widetilde{V}_{\tau}= \cl  \widetilde{V} - \mathcal{Q}_0 (\ku_V \cdot \nabla V) - \mathcal{Q}_0 (\ku_{\Theta} \cdot \nabla V)- \mathcal{Q}_0 (\ku_V \cdot \nabla \Theta).
\end{equation} 
Then, $\widetilde{V}$ has the following integral representation
\begin{eqnarray*}
& & \widetilde{V}(\eta, \tau) =  e^{\tau \cl } \widetilde{V}_0 - \int_0^{\tau} e^{(\tau- s) \cl } \mathcal{Q}_0 \nabla 
\bigg[\ku_V \cdot V+ \ku_{\Theta} \cdot V+ \ku_V \cdot \Theta\bigg] ds= \\
&=&  e^{\tau \cl} \widetilde{V}_0 - \int_0^{\tau} e^{(\tau- s) \cl } \mathcal{Q}_0 \nabla \left[(\al(s)\ku_G+\ku_{\tilde{V}})  \cdot (\al(s)G+\tilde{V})\right]ds \\
&-& \int_0^{\tau} e^{(\tau- s) \cl } \mathcal{Q}_0 \nabla \left[\ku_{\Theta} \cdot (\al(s) G+\tilde{V})\right] ds -  \int_0^{\tau} e^{(\tau- s) \cl } \mathcal{Q}_0  \left[ (\al(s) \ku_G+\ku_{\tilde{V}})  \cdot \nabla \Theta\right] ds 
\end{eqnarray*}
where we have used the divergence free property of $\ku_V$ and $\ku_{\Theta}$. We are now ready for the main technical result of this section.  
\begin{prop}
	\label{lem_00}
	Assume  $V_0 \in L^{\infty} \cap L^2(m)$, $1<m<3-\al$.  Then, for any $\eps>0$,   there exists a $C$, depending on $m, \al, \tilde{\theta}, v_0$,so that for $\al_0$ is introduced in \eqref{320} and  for any $\tau> 0$, there is the bound 
	\begin{equation}
	\label{500}
	\|V(\cdot, \tau)- \al_0 e^{(1- \f{3}{\al}) \tau} G(\cdot)\|_{L^2(m)}  \leq C  e^{(2-\f{m+4}{\al}+\eps)\tau}.
	\end{equation}
\end{prop}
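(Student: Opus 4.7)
The plan is to work with the spectral decomposition $V = \al(\tau)G + \tilde{V}$ introduced already in the excerpt, where $\al(\tau) = \al_0 e^{(1-\f{3}{\al})\tau}$ with $\al_0 = \int_{\rtwo} V_0\,d\eta$. Since the principal term $\al(\tau)G$ matches the ansatz in \eqref{500} exactly, the claim reduces to showing $\|\tilde V(\tau)\|_{L^2(m)}\leq C e^{(2-\f{m+4}{\al}+\eps)\tau}$. Starting from the Duhamel representation for $\tilde V$ already derived, I would apply the weighted semigroup estimates of Corollary \ref{cor:26}. The first three integrals carry a $\nabla$ outside, so estimate \eqref{32} supplies the rate $(1-\f{m+3}{\al}+\eps)$ for $e^{(\tau-s)\cl}\cq_0 \nabla$. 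For the fourth (the $\ku_V\cdot\nabla\Theta$ piece), the divergence-free property of $\ku_V$ and $\ku_{\tilde V}$ allows rewriting $\ku_Q\cdot\nabla\Theta = \nabla\cdot(\ku_Q\Theta)$, pulling a derivative outside so that \eqref{32} applies once more.

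The heart of the argument is to bound, in $L^2(m)$, each term that feeds into these Duhamel integrals after expanding $V = \al(\tau)G + \tilde V$. The purely $\al^2$ contribution $\al(s)^2\ku_G G$ is controlled by the rapid decay of $G$ and $\nabla G$ from Lemma \ref{le:10}, giving an input of size $e^{2(1-3/\al)s}$. The mixed terms $\al(s)(\ku_G\tilde V + \ku_{\tilde V}G)$ are estimated by placing the rapidly decaying factor (together with its weight) in $L^\infty$ and the other factor in $L^2(m)$, using the $L^p$ boundedness of the Riesz transform after absorbing $|\eta|^m$ into $G$. The quadratic $\ku_{\tilde V}\tilde V$ piece is handled by $\|\ku_{\tilde V}\|_{L^\infty}\|\tilde V\|_{L^2(m)}$, bounding the first factor via \eqref{a:40}, interpolation, and the $L^p$ estimates \eqref{125} together with Corollary \ref{cor:2}. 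For the terms involving $\Theta$, I exploit the scaling $\Theta(\eta,\tau) = e^{\tau(1-1/\al)}\tilde\theta(e^{\tau/\al}\eta)$ to trade $L^p(m)$ norms of $\Theta$ for weighted norms of the fixed function $\tilde\theta$, and use the scale-invariant smallness of $\|\tilde\theta\|_{L^{2/(\al-1)}}$ and $\|\nabla\tilde\theta\|_{L^{2/\al}}$ through Hölder on dual scale-invariant pairs such as $L^{2/(\al-1)}\times L^{2/(2-\al)}$.

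Assembling these pieces produces a Volterra inequality for $I(\tau):=\|\tilde V(\tau)\|_{L^2(m)}$ of precisely the form hypothesized in Lemma \ref{gro}, with exterior rate $\si = \f{m+3}{\al}-1-\eps$, forcing rate $\mu = \f{m+4}{\al}-2-\eps$ from the $\al^2$ term, and an internal rate $\ka>0$ coming either from the factor $\al(s) = \al_0 e^{(1-3/\al)s}$ (on cross terms) or from the scale-invariant smallness (on $\Theta$-terms). The preliminary a priori bound \eqref{200} from Proposition \ref{thm1} supplies the seed $I(\tau)\leq A_1 e^{-\ga\tau}$. I then invoke Lemma \ref{gro} to bootstrap the decay rate up to $-\mu$. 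The arithmetic verifying the ordering $\si\geq\mu$ and $\ka>0$ depends only on the standing assumptions $m<3-\al$ and $1<\al<2$, which ensure all exponents fall on the correct side; the iteration then terminates after finitely many steps at the sharp rate.

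The main technical obstacle I anticipate is the linear-in-$\tilde V$ interaction $\ku_\Theta\cdot\tilde V$ (and the symmetric $\ku_{\tilde V}\cdot\nabla\Theta$): since $\|\Theta\|_{L^\infty}$ and $\|\nabla\Theta\|_{L^2}$ both grow like $e^{(1-1/\al)\tau}$, a naive H\"older splitting loses the sharp rate. The remedy is to Hölder-pair in scale-invariant norms so that all $\tilde\theta$-dependent factors are both \emph{small} and \emph{time-independent}, with any remaining growth absorbed by the semigroup's spectral gap $\f{m+3}{\al}-1$. A secondary nuisance is commuting the weight $|\eta|^m$ past the Riesz transform when estimating $\ku_{\tilde V}$ in $L^2(m)$; the commutator bound of Lemma \ref{L_-50} (or a direct Coifman--Meyer-style argument) handles this loss and keeps the weighted estimates closed, completing the proof.
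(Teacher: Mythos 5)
Your skeleton is exactly the paper's: the decomposition $V=\al(\tau)G+\tilde{V}$ with $\tilde{V}=\cq_0 V$, the Duhamel formula estimated through the weighted semigroup bounds \eqref{16}--\eqref{32} of Corollary \ref{cor:26}, and closure via the modified Gronwall Lemma \ref{gro} seeded by the a priori bound \eqref{200}. Two of the execution steps you describe would fail as written, however. The first concerns the internal rate $\ka$: you assert that for the $\Theta$--linear terms the positive rate $\ka>0$ required by Lemma \ref{gro} comes ``from the scale-invariant smallness.'' It cannot. If you H\"older-pair in scale-invariant norms (e.g.\ $\|\ku_\Theta\|_{L^{2/(\al-1)}}=\|\tilde{\theta}\|_{L^{2/(\al-1)}}$), the factor multiplying $\|\tilde{V}(s)\|_{L^2(m)}$ in the Volterra kernel is small but \emph{independent of} $s$, i.e.\ $\ka=0$, and the iteration $\ga_{n+1}=\min(\mu,\f{\ka}{2}+\ga_n)$ in Lemma \ref{gro} never improves the exponent; moreover pairing $\tilde{V}$ against $L^{2/(2-\al)}$ exits the space $L^2(m)$ and destroys the Volterra structure. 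The paper instead measures $\ku_\Theta$ in the sub-critical norm $L^\infty$ via \eqref{a:40}, obtaining $\|\ku_\Theta(s)\|_{L^\infty}\leq C_\de e^{(1-\f{2-\de}{\al})s}$ with a strictly negative exponent because $\al<2$, which supplies the genuine $\ka=\f{1}{\al}-\f{1}{2}>0$. A smallness-based absorption could substitute for this, but that is a different argument from the lemma you invoke.

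The second issue is the term $\ku_{\tilde{V}}\cdot\nabla\Theta$. Writing it as $\nabla\cdot(\ku_{\tilde{V}}\Theta)$ and applying \eqref{32} forces you to bound $\|\Theta(s)\|_{L^2(m)}$, whose dominant (unweighted) part scales like $e^{(1-\f{2}{\al})s}\|\tilde{\theta}\|_{L^2}$; combined with $\|\ku_{\tilde{V}}(s)\|_{L^\infty}\lesssim e^{(1-\f{3}{\al}-\de)s}$ this yields only $e^{(2-\f{5}{\al}-\de)\tau}$, which for $m>1$ is strictly slower than the claimed $e^{(2-\f{m+4}{\al}+\eps)\tau}$; the weighted part would in addition require $\tilde{\theta}\in L^2(m)$, whereas Proposition \ref{prop:20} only furnishes $\nabla\tilde{\theta}\in L^2(m)$. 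The paper keeps the derivative on $\Theta$, splits with a cutoff into $|\eta|<1$ and $|\eta|>1$, and in the exterior region uses $\||\eta|^m\nabla\Theta(s)\|_{L^2}=e^{(1-\f{m+1}{\al})s}\||x|^m\nabla\tilde{\theta}\|_{L^2}$ together with the $|\ga|=0$ bound \eqref{16}; it is precisely this term that produces the exponent $2-\f{m+4}{\al}$ in \eqref{500}, so the splitting is not a cosmetic choice but the step that delivers the stated rate.
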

Let us comment right away that \eqref{500}, properly interpreted, is nothing but the main claim in Theorem \ref{theo:30}. 
\begin{proof}(Proposition \ref{lem_00}) 
The  main object of investigation  here is  the quantity  $I(\tau):=\|\tilde{V}(\tau)\|_{L^2(m)}$.  We will estimate it in a way that fits the framework of the modified Gronwall's tool, Lemma \ref{gro}. 
We start with the free term, which  is easy to estimate by \eqref{16}, 
$$
\|e^{\tau \cl} \widetilde{V}_0\|_{L^2(m)}\leq C e^{(1- \f{m+2}{\al}+ \eps) \tau}\|f\|_{L^2(m)},
$$
 according to \eqref{18}. Next, by means of \eqref{32} (with $|\ga|=1$),  and H\"older's inequality 
		\begin{eqnarray*}
& &   \int_0^{\tau}\| e^{ (\tau- s) \cl}  \mathcal{Q}_0 \nabla  \left[(\al(s)\ku_G+\ku_{\tilde{V}})  \cdot (\al(s)G+\tilde{V})\right]  \|_{L^2(m)}ds \leq \\
&\leq & C  \int_0^{\tau} \f{e^{(1-\f{m+3}{\al}+\eps)(\tau-s)}}{a(\tau-s)^{\f{1}{\al}}} \|\left[(\al(s)\ku_G+\ku_{\tilde{V}})  \cdot (\al(s)G+\tilde{V})\right]  \|_{L^2(m)}ds \leq \\
&\leq & C \al^2(0)  \|\ku_G\|_{L^\infty} \|G\|_{L^2(m)} \int_0^\tau \f{e^{(1-\f{m+3}{\al}+\eps)(\tau-s)}}{a(\tau-s)^{\f{1}{\al}}} 
e^{2(1-\f{3}{\al}) s} ds + \\
&+& C \al(0) \|\ku_G\|_{L^\infty} \int_0^\tau \f{e^{(1-\f{m+3}{\al}+\eps)(\tau-s)}}{a(\tau-s)^{\f{1}{\al}}}  e^{(1-\f{3}{\al}) s} \|\tilde{V}(s)\|_{L^2(m)} ds + \\
&+& C \al(0) \|G\|_{L^2(m)} \int_0^\tau \f{e^{(1-\f{m+3}{\al}+\eps)(\tau-s)}}{a(\tau-s)^{\f{1}{\al}}}  e^{(1-\f{3}{\al}) s}\|\ku_{\tilde{V}}(s)\|_{L^\infty} ds + \\
&+& C   \int_0^\tau \f{e^{(1-\f{m+3}{\al}+\eps)(\tau-s)}}{a(\tau-s)^{\f{1}{\al}}}  
 \|\ku_{\tilde{V}}(s)\|_{L^\infty} \|\tilde{V}(s)\|_{L^2(m)}  ds. 
		\end{eqnarray*}
	Due to the estimates \eqref{112} and $\al\in (1,2)$, we have that the previous expression is bounded by 
	$$
	C \int_0^\tau \f{e^{(1-\f{m+3}{\al}+\eps)(\tau-s)}}{a(\tau-s)^{\f{1}{\al}}} e^{(1-\f{3}{\al}) s}\left\{ e^{[(1-\f{3}{\al})-\de] s}+ \|\tilde{V}(s)\|_{L^2(m)}  \right\} ds. 
	$$
	The first term is estimated,  due to \eqref{318}, $m<3-\al$ and sufficiently small $\de>0$, 
	$$
	\int_0^\tau \f{e^{(1-\f{m+3}{\al}+\eps)(\tau-s)}}{a(\tau-s)^{\f{1}{\al}}} e^{(1-\f{3}{\al}) s} e^{[(1-\f{3}{\al})-\de] s}  ds\leq 
	C _\eps e^{(1-\f{m+3}{\al}+\eps)\tau}.
	$$
	All in all, 
	\begin{eqnarray*} 
& & 	\|\int_0^{\tau} e^{(\tau- s) \cl } \mathcal{Q}_0 \nabla \left[(\al(s)\ku_G+\ku_{\tilde{V}})  \cdot (\al(s)G+\tilde{V})\right]ds\|_{L^2(m)} \leq   C _\eps e^{(1-\f{m+3}{\al}+\eps)\tau}+ 	\\
	&+& 
	C \int_0^\tau \f{e^{(1-\f{m+3}{\al}+\eps)(\tau-s)}}{a(\tau-s)^{\f{1}{\al}}} e^{(1-\f{3}{\al}) s} \|\tilde{V}(s)\|_{L^2(m)}   ds. 
	\end{eqnarray*}
	Next, we control the other term in the expression for $\tilde{V}$. We have, again by \eqref{32}, 
		  \begin{eqnarray*}
		  	& &   \int_0^{\tau}\| e^{ (\tau- s) \cl}  \mathcal{Q}_0 \nabla   [\ku_{\Theta} \cdot (\al(s) G+\tilde{V}) ]\|_{L^2(m)} ds  \leq \\
		  	&\leq &   C   \int_0^\tau \f{e^{(1-\f{m+3}{\al}+\eps)(\tau-s)}}{a(\tau-s)^{\f{1}{\al}}} |\al(0)| e^{(1-\f{3}{\al}) s}
		  	\|\ku_\Theta G\|_{L^2(m)}  ds +   
		  		C  \int_0^\tau \f{e^{(1-\f{m+3}{\al}+\eps)(\tau-s)}}{a(\tau-s)^{\f{1}{\al}}} 
		  	\|\ku_\Theta\tilde{V}(s)\|_{L^2(m)}  ds. 
		  \end{eqnarray*}
Note that 
	$$
		 \|\ku_\Theta(s) G\|_{L^2(m)}\leq C \|\ku_\Theta(s)\|_{L^2} \|(1+|\cdot|^m)G\|_{L^\infty} \leq C \|\Theta\|_{L^2} \leq C e^{(1-\f{2}{\al})s}, 
$$ 
	while by the Sobolev embedding \eqref{a:40}
		\begin{eqnarray*}
	 	\|\ku_\Theta\tilde{V}(s)\|_{L^2(m)} &\leq & C 	\|\tilde{V}(s)\|_{L^2(m)} \|\ku_\Theta(s)\|_{L^\infty} \leq  C_\de\|\tilde{V}(s)\|_{L^2(m)}
	 	(\|\La^{-\de} \nabla \ku_\Theta\|_{L^2}+ \|\La^{\de} \nabla \ku_\Theta\|_{L^2})\\
	 	&\leq & C_\de\|\tilde{V}(s)\|_{L^2(m)} 	(\|\La^{-\de} \Theta\|_{L^2}+ 
	 	\|\La^{\de} \Theta\|_{L^2})\leq  C_\de e^{(1-\f{2-\de}{\al})s} \|\tilde{V}(s)\|_{L^2(m)}.
		\end{eqnarray*}
	All in all, choosing $\de<2-\al$, say $\de=\f{2-\al}{2}$, applying \eqref{318} and $1<m<3-\al$ and $\eps<<1$,  we obtain the bound 
		\begin{eqnarray*}
& & 	\| \int_0^{\tau} e^{ (\tau- s) \cl}  \mathcal{Q}_0 \nabla   [\ku_{\Theta} \cdot (\al(s) G+\tilde{V}) ]ds \|_{L^2(m)} \leq   C \int \f{e^{(1-\f{m+3}{\al}+\eps)(\tau-s)}}{a(\tau-s)^{\f{1}{\al}}}  e^{(2-\f{5}{\al})s} ds +	\\
	&+& C \int \f{e^{(1-\f{m+3}{\al}+\eps)(\tau-s)}}{a(\tau-s)^{\f{1}{\al}}}   e^{(\f{1}{2}-\f{1}{\al})s} \|\tilde{V}(s)\|_{L^2(m)} ds\leq \\
	&\leq & C e^{(2-\f{5}{\al})\tau}+C \int \f{e^{(1-\f{m+3}{\al}+\eps)(\tau-s)}}{a(\tau-s)^{\f{1}{\al}}}   e^{(\f{1}{2}-\f{1}{\al})s} \|\tilde{V}(s)\|_{L^2(m)} ds. 
 	\end{eqnarray*}
	Next, we estimate the contribution of the last two terms in the equation for $\tilde{V}$. We have 
	\begin{eqnarray*}
& & 	\|	\int_0^{\tau} e^{ (\tau- s) \cl}  \mathcal{Q}_0 \nabla 
		(\al(s) \ku_G\cdot \Theta(s)) ds\|_{L^2(m)} \leq 	C |\al(0)| \int_0^{\tau} e^{ (1-\f{m+3}{\al}+\eps)(\tau- s)} e^{(1-\f{3}{\al})s} \|\ku_G\Theta(s)\|_{L^2(m)}  \\
		&\leq & C |\al(0)|\|\ku_G\|_{L^\infty}  \int_0^{\tau} e^{ (1-\f{m+3}{\al}+\eps)(\tau- s)} e^{(1-\f{3}{\al})s}  \|\Theta(s)\|_{L^2(m)} ds\leq C  \int_0^{\tau} e^{ (1-\f{m+3}{\al}+\eps)(\tau- s)} e^{(2-\f{5}{\al})s} ds\\
		&\leq & C e^{ (1-\f{m+3}{\al}+\eps)\tau}.
	\end{eqnarray*}
 Finally, we estimate the contribution of 
 $
 \int_0^{\tau} e^{ (\tau- s) \cl}  \mathcal{Q}_0 [\ku_{\tilde{V}}  \cdot \nabla  \Theta] ds, 
 $ it turns out that we need to split it as follows 
	\begin{eqnarray*}
\int_0^{\tau} e^{ (\tau- s) \cl}  \mathcal{Q}_0 [\ku_{\tilde{V}}  \cdot \nabla  \Theta] 
=\int_0^{\tau} e^{ (\tau- s) \cl}  \mathcal{Q}_0 [\ku_{\tilde{V}}  \cdot \chi(\eta) \nabla  \Theta ] 
+\int_0^{\tau} e^{ (\tau- s) \cl}  \mathcal{Q}_0 [\ku_{\tilde{V}}  \cdot (1-\chi(\eta)) \nabla  \Theta ],  
	\end{eqnarray*}
	where $\chi\in C^\infty_0$ is supported in $|\eta|<1$. In the region $|\eta|<1$, we have the bound 
		\begin{eqnarray*}
& & 		\|	\int_0^{\tau} e^{ (\tau- s) \cl}  \mathcal{Q}_0 [\ku_{\tilde{V}}  \cdot \chi(\eta) \nabla  \Theta] ds\|_{L^2(m)} \leq 		\int_0^{\tau} \|e^{ (\tau- s) \cl}  \mathcal{Q}_0 \nabla  [\ku_{\tilde{V}}  \cdot \chi(\eta)   \Theta] \|_{L^2(m)} ds + \\
		&+& 	\int_0^{\tau} \|e^{ (\tau- s) \cl}  \mathcal{Q}_0   [\ku_{\tilde{V}}  \cdot \Theta \nabla \chi(\eta)   ] \|_{L^2(m)} ds. 
		\end{eqnarray*}
	We apply either \eqref{16} or \eqref{32} to obtain 
		\begin{eqnarray*}
			& & 		\|	\int_0^{\tau} e^{ (\tau- s) \cl}  \mathcal{Q}_0 [\ku_{\tilde{V}}  \cdot \chi(\eta) \nabla  \Theta] ds\|_{L^2(m)} \leq 	
			C \int_0^{\tau} e^{ (1-\f{m+3}{\al}+\eps)(\tau- s)} \|\ku_{\tilde{V}}\|_{L^\infty}
			 \|\chi(\eta)\Theta\|_{L^2(m)} ds + \\
			 &+& C \int_0^{\tau} 
			 e^{ (1-\f{m+2}{\al}+\eps)(\tau- s)}  \|\ku_{\tilde{V}}\|_{L^\infty}
			 \|\nabla \chi(\eta) \Theta\|_{L^2(m)} ds\leq  \\
			 &\leq &  C \int_0^{\tau} 
			 e^{ (1-\f{m+2}{\al}+\eps)(\tau- s)}  e^{(1-\f{3}{\al}-\de)s} 
			 \|\Theta(s)\|_{L^2} ds \leq \\
			 &\leq &  C \int_0^{\tau} 
			 e^{ (1-\f{m+2}{\al}+\eps)(\tau- s)}  e^{(2-\f{5}{\al}-\de)s} ds\leq C e^{(1-\f{m+2}{\al}+\eps)\tau}.
		\end{eqnarray*}
	where we have used \eqref{40} and \eqref{112}. 
	
	Finally, in the region $|\eta|>1$, we apply \eqref{32}. We obtain   
		\begin{eqnarray*}
		& &  \int_0^{\tau} \|e^{ (\tau- s) \cl}  \mathcal{Q}_0 [\ku_{\tilde{V}}  \cdot (1-\chi(\eta)) \nabla  \Theta ]\|_{L^2(m)}  ds\leq \\
		&\leq &   \int_0^{\tau} 
		 e^{ (1-\f{m+2}{\al}+\eps)(\tau- s)} \|\ku_{\tilde{V}(s)}\|_{L^\infty} 
		 \|(1-\chi(\eta)) \nabla \Theta\|_{L^2(m)} ds \\
		 &\leq & C  \int_0^{\tau} 
		 e^{ (1-\f{m+2}{\al}+\eps)(\tau- s)} e^{(1-\f{3}{\al}+\de)s} 
		 \||\eta|^m \nabla \Theta\|_{L^2} ds \\
		 &\leq & C  \int_0^{\tau} 
		 e^{ (1-\f{m+2}{\al}+\eps)(\tau- s)} e^{(2-\f{m+4}{\al}+\de)s} ds\leq  C e^{ (2-\f{m+4}{\al}+\de)\tau}.
		\end{eqnarray*}
	where we have used $\||\eta|^m \nabla \Theta(s)\|_{L^2}=e^{(1-\f{m+1}{\al})s} \||\eta|^m \nabla \tilde{\theta}\|_{L^2}$ and $\al<2$. 
	
	Putting all the estimates together implies the {\it a posteriori} bound 
	$$
	\|\tilde{V}(\tau)\|_{L^2(m)}\leq C e^{(2-\f{m+4}{\al}+\de)\tau}+C \int_0^\tau 	 e^{ (1-\f{m+3}{\al}+\eps)(\tau- s)} e^{(\f{1}{2}-\f{1}{\al}) s} \|\tilde{V}(s)\|_{L^2(m)} ds. 
	$$
	Applying the Gronwall's inequality, Lemma \ref{gro}, we obtain the bound 
	$$
	\|\tilde{V}(\tau)\|_{L^2(m)}\leq C e^{(2-\f{m+4}{\al}+\de)\tau}.
	$$
	
\end{proof} 
 \appendix
 
 \section{Proof of Lemma \ref{L_-50}}
 	The proof of this lemma is based by some modifications in the proof of   relation $(4.8)$, \cite{HS}.   Recall, that for $s\in (0,2)$
 	\begin{eqnarray*}
 		[|\nabla|^{s}, g] f (x)&=& |\nabla|^{s} (g f)- g \ |\nabla|^{s}f= c_s \int \f{f(x) g(x)- f(y) g(y)}{|x- y|^{2+ s}} dy- g(x) c_s\int \f{f(x)- f(y)}{|x- y|^{2+ s}} dy \\
 		&=& c_s \int \f{ f(y)( g(x)-  g(y) )}{|x- y|^{2+ s}} dy.
 	\end{eqnarray*}
 	Introduce a smooth partition of unity, that is a function $\psi\in C^\infty_0(\rone)$, $supp\  \psi\subset (\f{1}{2},2)$, so that 
 	$$
 	\sum_{k=-\infty}^\infty \psi(2^{-k} |\eta|) = 1, \eta\in\rtwo, \eta\neq 0.
 	$$
 	Introduce another $C^\infty_0$ function $\Psi(z)=|z|^\si \psi(z)$, so that we can decompose 
 	$$
 	|\eta|^{\si} = \sum_{k=-\infty}^\infty |\eta|^{\si} \psi(2^{-k} |\eta|) = \sum_{k=-\infty}^\infty 2^{k \si} \Psi(2^{-k} |\eta|). 
 	$$
 	We can then write 
 	\begin{eqnarray*}
 		F(\eta) &:=&  [\La^{s},|\eta|^\si] f=   \sum_k 2^{\si k}  [\La^s, \Psi (2^{- k} \cdot)]  f(\eta)  = 
 		\sum_k 2^{\si k}   \int  \f{f(y) (\Psi(2^{-k}\eta)- \Psi(2^{-k}y))}{|\eta- y|^{2+ s}} dy. 
 	\end{eqnarray*}
 	Introducing  
 	$$
 	F_k:= \int  \f{|f(y)| |\Psi(2^{-k}\eta)- \Psi(2^{-k}y)|}{|\eta- y|^{2+ s}} dy, 
 	$$
 	we need to control 
 	\begin{eqnarray*}
 		\|F\|_{L^2}^2 &=&  \sum_l \int_{|\eta|\sim 2^l} |F(\eta)|^2 d\eta=\sum_l \int_{|\eta|\sim 2^l} \left|\sum_k 2^{s k} F_k(\eta)\right|^2 d\eta=\\
 		&=& \sum_l \int_{|\eta|\sim 2^l} \left|\sum_{k>l+10}  2^{s k} F_k(\eta)\right|^2 d\eta+\sum_l \int_{|\eta|\sim 2^l} \left|\sum_{k=l-10}^{l+10}  2^{s k} F_k(\eta)\right|^2 d\eta+\\
 		&+& \sum_l \int_{|\eta|\sim 2^l} \left|\sum_{k<l-10}  2^{s k} F_k(\eta)\right|^2 d\eta=:K_1+K_2+K_3
 	\end{eqnarray*}
 	
 	We first consider the cases $k>l+10$.  One can estimate easily $F_k$ point-wise.  More specifically, since in the denominator of the expression for $F_k$, we have $|\eta-y|\geq \f{1}{2} |\eta|\geq 2^{k-3}$, 
 	$$
 	|F_k(\eta)|\leq 2^{-k(2+\si)} \int  |f(y)||\Psi(2^{-k}y)| dy\leq C 2^{-k(1+\si)} \|f\|_{L^2(|y| \sim 2^k)},
 	$$
 	whence 
 	\begin{eqnarray*}
 		& & K_1 \leq 
 		\sum_l 2^{2 l} \sum_{k_1>l+10} \sum_{k_2>l+10} 
 		2^{k_1(s- 1-\si)} \|f\|_{L^2(|y| \sim 2^{k_1})} 2^{k_2(s- 1- \si)} \|f\|_{L^2(|y| \sim 2^{k_2})}\\
 		&\leq &   \sum_{k_1} \sum_{k_2} 2^{2\min(k_1,k_2)}2^{k_1(s- 1- \si)}  \|f\|_{L^2(|y| \sim 2^{k_1})} 2^{k_2(s- 1- \si)} \|f\|_{L^2(|y| \sim 2^{k_2})} \\
 		&\leq & C \sum_k 2^{2 k(s-\si)}  \|f\|_{L^2(|y| \sim 2^{k})}^2 \leq C  \| |\eta|^{s- \si} f\|^2.
 	\end{eqnarray*}
 	where we have used $\sum_{l: l<\min(k_1,k_2)-10}  2^{2l} \leq C 2^{2\min(k_1,k_2)}$. 
 	
 	For the case $k<l-10$, we perform similar argument, since 
 	$$
 	|F_k(\eta)|\leq C 2^{-l(2+\si)} 2^k \|f\|_{L^2(|y| \sim 2^k)}. 
 	$$
 	So, 
 	\begin{eqnarray*}
 		& & K_3 \leq C \sum_l 2^{2l} 2^{-2 l (2+ \si)} \sum_{k_1<l-10} \sum_{k_2<l-10} 2^{(s+ 1) k_1} \|f\|_{L^2(|y| \sim 2^{k_1})} 2^{(s+ 1) k_2} \|f\|_{L^2(|y| \sim 2^{k_2})} \\
 		&\leq &C  \sum_{k_1} \sum_{k_2} 2^{(s+ 1) k_1} \|f\|_{L^2(|y| \sim 2^{k_1})} 2^{(s+ 1) k_2} \|f\|_{L^2(|y| \sim 2^{k_2})} 2^{-2 (1+ \si)\max(k_1,k_2)} \\
 		&\leq & C \sum_k 2^{2 k(s- \si)}  \|f\|_{L^2(|y| \sim 2^{k})}^2 \leq C  \| |\eta|^{(s- \si)} f\|^2.
 	\end{eqnarray*}
 	Finally, for the case $|l-k|\leq 10$, we use 
 	$$
 	|\Psi (2^{- k} \eta)- \Psi (2^{- k} y)| \leq 2^{- k} |\eta- y| |\nabla \Psi (2^{- k} (\eta- y))| \leq C 2^{- k} |\eta- y|,
 	$$
 	so that 
 	$$
 	|F_k(\eta)|\leq C 2^{-k} \int_{|y| \sim 2^k} \f{|f(y)|}{|\eta- y|^{1+ \si}} dy = C 2^{-k} |f| \chi_{|y| \sim 2^k} * \f{1}{|\cdot|^{1+ \si}}.
 	$$
 	Thus, by H\"older's 
 	\begin{eqnarray*}
 		& & K_2  \leq C \sum_k \int_{|\eta|\sim 2^k} 2^{s k} \left||f| \chi_{|y| \sim 2^k} * \f{1}{|\cdot|^{1+ \si}}\right|^2 d\eta\leq 
 		C \sum_k 2^{s k} \||f| \chi_{|y| \sim 2^k} * \f{1}{|\cdot|^{1+ \si}}\|_{L^2(|\eta|\sim 2^k)}^2 \\
 		&\leq & C \sum_k 2^{2 k(s-\si)} \||f| \chi_{|y| \sim 2^k} * \f{1}{|\cdot|^{1+ \si}}\|_{L^{\f{2}{\si}}(|\eta|\sim 2^k)}^2 \leq 
 		C \sum_k 2^{2 k(s-\si)}   \|f\|_{L^2(|\eta| \sim 2^k)}^2 \leq C  \| |\eta|^{s-\si} f\|^2.
 	\end{eqnarray*}
 	where we have used the Hausdorf-Young's inequality 
 	$$
 	\|f \chi_{|y| \sim 2^k} * \f{1}{|\cdot|^{1+ \f{\al}{2}}}\|_{L^{\f{2}{\si}}}   \leq C \|\f{1}{|\cdot|^{1+ \si}} \|_{L^{\f{2}{1+ \si}, \infty}} \ \|f\|_{L^2(|\eta| \sim 2^k)}\leq C \|f\|_{L^2(|\eta| \sim 2^k)}.
 	$$
{\bf Conflict of interest statement: } On behalf of all authors, the corresponding author states that there is no conflict of interest.


\begin{thebibliography}{99}
	
	\bibitem{AH}	H. Abidi, T. Hmidi, \emph{ On the global well posedness for Boussinesq system,}
	{\em 	J. Diff. Equ.} {\bf 233}, {\bf 1} (2007),  p. 199--220.
	
	\bibitem{B}  M. Ben-Artzi, \emph{Global solutions of two-dimensional Navier-Stokes and Euler equations},  {\em Arch. Rational Mech. Anal.}, {\bf 128},  (1994) (4), p. 329--358. 
	
\bibitem{CVa} L. 	Caffarelli, A. Vasseur {\emph  Drift diffusion equations with fractional diffusion and the quasi-geostrophic
equation}, {\em Ann. Math.}, {\bf 171},  (2010), p. 1903--1930. 
	
		\bibitem{C} A. Carpio,  \emph{Asymptotic behavior for the vorticity equations in dimensions two and three.} {\em  Comm. Partial Differential Equations} {\bf 19} (1994), no. 5-6, p. 827--872.
		
		\bibitem{CF} 	J. Carrillo, L. Ferreira, {\emph Self-similar solutions and large time asymptotics for the dissipative quasi-geostrophic equation.}, {\em Monatsh. Math.}, {\bf 151},  (2007), no. 2, p. 111--142.
		
		\bibitem{C1} 	D. Chae, \emph{Global regularity for the 2D Boussinesq equations with partial viscosity terms}, 
		{\em Adv. Math.} , {\bf 203},  (2006) p. 497--513.
	

	
 \bibitem{CL} D. Chamorro,  P.G. Lemari\'{e}-Rieusset,  \emph{Quasi-geostrophic equations, nonlinear Bernstein inequalities and $\alpha$-stable processes}, {\em Rev. Mat. Iberoam.},  {\bf 28},  (2012), no. 4, p. 1109--1122.
 
 \bibitem{CMT} P. Constantin, A.  Majda, E. Tabak {\emph Formation of strong fronts in the 2D quasi-geostrophic
 thermal active scalar}, {\em Nonlinearity}, {\bf 7}, (1994), p.  1495--533. 

\bibitem{CCW} P. Constantin,  D. Cordoba,  J. Wu, {\emph On the critical dissipative quasi-geostrophic equation.  },   {\em Indiana Univ. Math. J.} {\bf 50},  (2001), Special Issue, p. 97--107.

\bibitem{CTV} P. Constantin,  A. Tarfulea, V. Vicol, {\emph Absence of anomalous dissipation of energy in forced
two dimensional fluid equations}, {\em Arch. Ration. Mech. Anal.}, {\bf 212},  (2014), p. 875--903. 

\bibitem{CTV1} P. Constantin,  A. Tarfulea, V. Vicol, {\emph Long time dynamics of forced critical SQG}, {\em Comm.
Math. Phys.}, {\bf 335}, (2015), p.   93--141. 

\bibitem{CV} P. Constantin, V. Vicol , {\emph Nonlinear maximum principles for dissipative linear nonlocal
operators and applications}, {\em Geom. Funct. Anal.}, {\bf 22},  (2012), p. 1289--1321. 
 
\bibitem{CW}  P. Constantin, J. Wu, {\emph Behavior of solutions of 2D quasi-geostrophic equations}, {\em SIAM J. Math. Anal.}, {\bf 30},  (1999), p. 937--948.
	
	\bibitem{CC} A. Cordoba, D. Cordoba, \emph{A maximum principle applied to quasi-geostrophic equations.} 
	{\em Comm. Math. Phys.} {\bf 249},  (2004), no. 3, p. 511--528.
	
	 \bibitem{Dai} M. Dai, \emph{ Existence and stability of steady-state solutions to the quasi-geostrophic equations in $\rtwo$}, {\em  Nonlinearity}, {\bf 28},  (2015), no. 11,  p. 4227--4248.
	 
	 \bibitem{DC} B. Dong,  Z. Chen, {\emph 
	 Asymptotic stability of the critical and super-critical dissipative quasi-geostrophic equation.}, {\em 
	 Nonlinearity}, {\bf  19},  (2006), no. 12, p. 2919--2928.
	
	 \bibitem{FNP} L.   Ferreira, C. Niche,  G. Planas, 
	 {\emph 	 	Decay of solutions to dissipative modified quasi-geostrophic equations.},
	 {\em Proc. Amer. Math. Soc.}, {\bf  145},  (2017), no. 1, p. 287--301.
	 
\bibitem{FPV} S. 	 Friedlander,  N. Pavlovi\'c, V. Vicol,  {\emph 
	 Nonlinear instability for the critically dissipative quasi-geostrophic equation}, {\em Comm. Math. Phys.}, {\bf 292},
 (2009), no. 3, p. 797--810. 
	 
	\bibitem{JMST} M.  Jolly,  V. Martinez,  T. Sadigov,  E. Titi, Edriss{\emph 
	 A determining form for the subcritical surface quasi-geostrophic equation.},  {\em J. Dynam. Differential Equations}, {\bf 31},  (2019), no. 3, p. 1457--1494.
	
  	\bibitem{GW}    T. Gallay, C. E. Wayne ,  \emph{Invariant manifolds and long--time asymptotics of the Navier--Stokes and vorticity equations on $\rtwo$},  {\em Arch. Ration. Mech. Anal},  {\bf 163},  (2002), (3),   p. 209--258. 
  	
  	\bibitem{GW1} T. Gallay, C. E. Wayne, \emph{Long-time asymptotics of the Navier-Stokes and vorticity equations on $R^3$}, {\em Phil. Trans Roy. Soc. Lond.},  {\bf 360}, (2002),  p. 2155--2188.  
  	
 
  	 
  	 
  	 \bibitem{KNV} A. Kiselev, F.  Nazarov, A. Volberg,  {\emph Global well-posedness for the critical 2D dissipative
quasi-geostrophic equation}, {\em Inventiones Math.} {\bf 167}, (2007), p.  445--453. 

 \bibitem{KN} A. Kiselev, F.  Nazarov,   {\emph Global regularity for the critical dispersive dissipative surface
quasi-geostrophic equation}, {\em Nonlinearity}, {\bf 23},  (2010), p. 549--554. 
   
  \bibitem{MS} T. Miyakawa, M. Schonbek. \emph{On optimal decay rates for weak solutions to the Navier-Stokes equations in $R^n$},  {\em Proceedings of Partial Differential Equations and Applications}  (Olomouc, 1999),  (2001) {\bf 126}, p.  443--455, 2001.
  
  \bibitem{NS} C. Niche, M. E. Schonbek, \emph{Decay of weak solutions to the 2D dissipative quasi-geostrophic equation}, {\em  Comm. Math. Phys.} , {\bf 276}, (2007), no. 1, p. 93--115. 
  
 
  
  \bibitem{S2} M.E. Schonbek, \emph{$L^2$ decay for weak solutions of the Navier–Stokes equations,} 
  {\em  Arch. Ration. Mech. Anal.} {\bf  88},  (3) (1985),  p. 209--222.

 \bibitem{S3} M.E. Schonbek, \emph{Lower bounds of rates of decay for solutions to the Navier-Stokes equations}, {\em J. Amer. Math. Soc.} {\bf 4}  (1991) p. 423--449.
 
 \bibitem{S4} M.E. Schonbek, \emph{ Asymptotic behavior of solutions to the three-dimensional Navier–Stokes equations}, {\em Indiana Univ. Math.  J.} {\bf 41} (1992)  p. 809--823.
 
 \bibitem{SS} M. Schonbek,  T. Schonbek, {\emph Moments and lower bounds in the far-field of solutions to quasi-geostrophic flows.} {\em  Discrete Contin. Dyn. Syst.}, {\bf  13},  (2005), no. 5,  p. 1277--1304.
 
\bibitem{HS} A. Stefanov,  F.  Hadadifard,  \emph{On the sharp time decay rates for the 2D generalized quasi-geostrophic equation and the Boussinesq system}, {\em J. Nonlinear Sci.}, {\bf 29},  (2019), no. 5, p. 2231--2296. 
 

 

\end{thebibliography}
\end{document}